\newlength{\faktor@zaehlerhoehe}
\newlength{\faktor@nennerhoehe}
\DeclareRobustCommand*{\faktor}[3][]
{
   { \mathpalette{\faktor@impl@}{{#1}{#2}{#3}} }
}
\newcommand*{\faktor@impl@}[2]{\faktor@impl#1#2}
\newcommand*{\faktor@impl}[4]{
   \settoheight{\faktor@zaehlerhoehe}{\ensuremath{#1#2{#3}}}   \settoheight{\faktor@nennerhoehe}{\ensuremath{#1#2{#4}}}      \raisebox{0.5\faktor@zaehlerhoehe}{\ensuremath{#1#2{#3}}}      \mkern-5mu\diagup\mkern-4mu      \raisebox{-0.5\faktor@nennerhoehe}{\ensuremath{#1#2{#4}}}}
\makeatletter\AtBeginDocument{\hypersetup{pdftitle = {\@title}, pdfauthor = {\@author} }}\makeatother
\tikzset{commutative diagrams/arrow style=math font}
\title[Equivariant Characteristic Forms]{Equivariant Characteristic Forms in the Cartan model and Borel equivariant Cohomology}
\author{Andreas Kübel}
\address{A.K., Max Planck Institute for Mathematics in the Sciences, Inselstraße 22, 04103 Leipzig, Germany}
\email{kuebel@mis.mpg.de}
\author{Andreas Thom}
\address{A.T., Institut f\"ur Geometrie, TU Dresden, 01062 Dresden, Germany}
\email{andreas.thom@tu-dresden.de}
\date{November 10, 2015}
\newcommand{\emphind}[1]{\emph{#1}}
\newcommand{\symindex}[2][a]{}
\theoremstyle{plain}\makeatletter
\g@addto@macro{\thm@space@setup}{\thm@headpunct{}}
\newtheorem{theorem}{Theorem}[section]
\def\Mynewtheorem#1#2{\newaliascnt{#1}{theorem}
\newtheorem{#1}[#1]{#2}
\aliascntresetthe{#1}
\expandafter\def\csname #1autorefname\endcsname{#2}
}
\theoremstyle{definition}
\DeclareMathOperator{\id}{id}
\DeclareMathOperator{\End}{End}
\DeclareMathOperator{\Hom}{Hom}
\DeclareMathOperator{\Gl}{Gl}
\DeclareMathOperator{\Ad}{Ad}
\DeclareMathOperator{\pr}{pr}
\DeclareMathOperator{\sgn}{sgn}
\newcommand{\Cech}{\v{C}ech{}}
\newcommand{\from}{\colon}
\newcommand{\dual}{^\vee}
\newcommand{\defeq}{\mathrel{\mathop:}=}
\newcommand{\eqdef}{=\mathrel{\mathop:}}  
\newcommand{\del}{\partial}
\newcommand{\Natural}{\mathbb{N}}
\newcommand{\Real}{\mathbb{R}}
\newcommand{\C}{\mathbb{C}}
\newcommand{\g}{\mathfrak{g}}
\newcommand{\invers}{^{-1}}
\newcommand{\cl}{\mathrm{cl}}
\newcommand{\ddt}{\left.\frac{d}{dt}\right|_{t=0}}
\tikzset{  >=latex,   inner sep=0pt,  outer sep=2pt,  mark coordinate/.style={inner sep=0pt,outer sep=0pt,minimum size=3pt,
    fill=black,circle}}
\begin{document}
\begin{abstract}
We show the compatibility of the differential geometric and the topological construction of equivariant characteristic classes for compact Lie groups. Our analysis motivates a differential geometric construction for equivariant characteristic classes in the non-compact case. 
    
This compatibility is generally assumed and used in various cases, but there is only a proof for compact connected Lie groups in the literature, see the work of Raoul Bott and Loring Tu. Our proof applies and generalizes ideas of Johan Dupont and Ezra Getzler.
\end{abstract}

\maketitle

\tableofcontents

\makeatletter{}\section{Introduction}\enlargethispage{\baselineskip}

The study of characteristic classes of vector bundles over manifolds has a long history. It is well-known that there are essentially two different ways to construct characteristic classes. One is differential geometric, using the curvature tensor of some chosen connection, the other one is topological, relying on a computation of the cohomology of the appropriate classifying space. It is also well-known that both approaches are equivalent, one giving slightly more information because the outcome is actually a differential form, the other giving slightly more information because the outcome is actually an integral cohomology class rather than a class in de Rham cohomology. A combination of both approaches leads to characteristic classes taking values in differential cohomology.

In this note, we will present a self-contained description of the two approaches in an equivariant setting and show the equivalence of the two approaches in case of equivariance with respect to the action of a compact group -- in complete analogy to the non-equivariant situation. The results that we are able to prove must be well-known to experts in the field, nevertheless only partial results have been published. Bott and Tu \cite{BottTu01} cover the case of equivariance with respect to the action of a connected Lie group. We do not claim any originality for the results presented in this paper -- the purpose of writing it is to provide a self-contained exposition of the constructions and complete proofs of the results. However, as a by-product our proofs are different from the original approaches and good for generalizations also to the non-compact setting -- see the remarks below. Moreover, we will make use of various explicit formulas that we obtain in order to study equivariant differential cohomology, see \cite{Ich2}. In fact, providing a sound basis for the development of equivariant differential cohomology was one of our main motivations to review the theory of equivariant characteristic classes.

\vspace{0.1cm}

Let $G$ and $K$ be Lie groups and let $M$ be a manifold. A $G$-equivariant (smooth) principal $K$-bundle is a (smooth) principal $K$-bundle $\pi\from E\to M$,  where $G$ acts from the left on $E$ and $M$, such that
\begin{enumerate}
    \item $\pi$ is $G$-equivariant, i.e. $\pi(gx)=g\pi(x)$ for any $g\in G$ and $x\in E$, and
    \item the left action of $G$ and the right action of $K$ commute, i.e. $(gx)k=g(xk)$ for any $g\in G$, $k\in K$ and $x\in E$.
\end{enumerate}

A $G$-\emphind{equivariant characteristic class} $c$ for $G$-equivariant principal $K$-bundles associates to every isomorphism class of topological $G$-equivariant principal $K$-bundles $\pi\from E\to M$ a cohomology class $c(E)\in H^*_G(X)$, such that $c((\bar f,f)^*E)=f^*c(E)$ for every pullback diagram \[\begin{tikzcd}
E'\arrow{r}{\bar f}\arrow{d}{\pi'}&E\arrow{d}{\pi}\\
M'\arrow{r}{f}&M
\end{tikzcd}\]
of $G$-equivariant principal $K$-bundles.

One way to define equivariant characteristic classes, is to apply the Borel construction (compare, e.g., \cite[Section 5.4]{Libine},\cite{BottTu01}): Let $E\to M$ be a $G$-equivariant principal $K$-bundle and $EG\to BG$ a universal principal $G$-bundle, then the quotient of the diagonal $G$-action on the Cartesian product
\[\pi_G=id\times \pi:EG\times_G E\to EG\times_G M\eqdef M_G\]
is a principal $K$-bundle over the homotopy quotient of the action on the base space and hence
\[c^G(E)\defeq c(EG\times_G E)\in H^*(EG\times_G M)=H^*_G(M)\]
defines a $G$-equivariant characteristic class.

For non-equivariant smooth principal $K$-bundles $E\to M$, there is the well-known Chern-Weil construction, which defines characteristic classes by differential geometric data: Given a symmetric polynomial $P$ on the Lie algebra of $K$, which is invariant under the adjoint action of $K$, then, for any connection on the bundle, the evaluation of $P$ on the curvature of the connection is a closed differential form on $E$, which is the pullback of a form on the base space, known as characteristic form. The equivalence class of the characteristic form in de Rham cohomology does not the depend on the connection and is a characteristic class of the bundle.

Nicole Berline and Michèle Vergne \cite{BerlineVergne} generalized this definition to the equivariant setting by replacing the curvature by the sum of the curvature and the moment map (see \autoref{Def:momentmap}), where now the equivariant characteristic form is an element in the Cartan model (see \autoref{sec:Cartan}). This is also discussed in \cite[pp.204]{BerlineGetzlerVergne} and \cite{Libine}.

Thus, starting from a invariant symmetric polynomial, one obtains an equivariant characteristic class in two ways: 1) Take the characteristic class associated to the polynomial by the Chern-Weil construction and apply it to the Borel construction or 2) use the definition of Berline and Vergne. We will show that both ways lead to the same class in equivariant cohomology. This is generally assumed the hold, see, e.g., \cite[311]{Jeffrey}, however without giving or citing a proof.

For compact connected Lie groups acting on the bundle, there is a proof of this compatibility given in \cite{BottTu01}. We will use different methods and give a proof that works for general compact Lie groups. Since the Cartan model does only compute equivariant cohomology for compact groups, this is the complete answer. For non-compact groups, nevertheless, there is an injection of the Cartan model to a model of equivariant cohomology defined by Ezra Getzler in \cite{Getzler}, and, in this sense, we also have compatibility for non-compact Lie groups.

Our proof will be based on simplicial manifolds and is a refinement of a construction of \cite{Dupont}. One can read parts of it as a proof of \cite[Theorem 3.1.1]{Getzler}, which Getzler claims to be proven in \cite{Dupont76}, where only a weaker statement is shown. For the relation between the de Rham cohomology on simplicial manifolds and the Cartan model, we will use arguments of \cite{Getzler}.

This work arose from the first author's PhD-thesis \cite{Diss}.
\section{Equivariant cohomology}
\subsection{Borel equivariant cohomology}
Let $M$ be a smooth manifold acted on from the left by a Lie group $G$. To define equivariant cohomology one uses two properties which one expects from such a theory: it should be homotopy invariant and for free actions, the equivariant cohomology should be the cohomology of the quotient. Recall that the total space of the classifying bundle, $EG$, is a contractible topological space with free $G$-action. Hence $EG\times M$ has the homotopy type of $M$ and the diagonal action is free. Hence one defines 
\[H^*_G(M)\defeq H^*(EG\times_G M),\] 
where $EG\times_G M$ \symindex[e]{EG\times_G M} is the quotient of $EG\times M$ by the diagonal action.
We are interested in differential form models for equivariant cohomology, but in general $EG$ is not a finite-dimensional manifold, hence we cannot use the usual de Rham cohomology. But there is a model for $EG$, which consist of finite dimensional manifold:

\subsection{Simplicial manifolds}
\label{sec:simpman}
The model of $EG\times_G M$ we are going to use is given by a simplicial manifold. To define this, recall (for more details see, e.g., \cite[Section I.1]{simphomtheory}) that the simplex category $\Delta$\symindex[d]{\Delta} is the category, whose objects are non empty finite ordered sets
\[[p]=\{0<1<\dots<p\},\text{ for } p\in\Natural,\] and the set of morphisms $\Delta([p],[q]), p,q\in\Natural$ is the set of order preserving maps $f\from [p]\to [q]$.

The morphisms of this category are generated by \emphind{coface maps} $\del^i\from [p-1] \to [p]$, where $\del^i$ is the map that misses $i$ and \emphind{codegeneracy maps} $\sigma^i\from [p]\to [p-1]$, where $\sigma^i$ is uniquely determined by hitting $i$ twice.
These maps satisfy the following \emph{cosimplicial relations}:
\begin{alignat*}{2}
\del^j\del^i&=\del^i\del^{j-1}&&\text{if } i<j\\
\sigma^j\del^i&=\del^i\sigma^{j-1}&&\text{if } i<j\\
\sigma^i\del^i&=\id=\sigma^i\del^{i+1}&&\\
\sigma^j\del^i&=\del^{i-1}\sigma^{j}&&\text{if } i>j+1\\
\sigma^j\sigma^i&=\sigma^i\sigma^{j+1}&&\text{if } i\leq j\\    
\end{alignat*}\symindex[d]{\del^i}\symindex[s]{\sigma^i}
\begin{defn}[see, e.g., {\cite[p.89]{Dupont}}]
    A \emph{simplicial manifold}\index{simplicial manifold} is contra-variant functor from the simplex category $\Delta$ to the category of smooth manifolds.
\end{defn}
Explicitly this is an $\Natural$-indexed family of manifolds with smooth \emph{face}\index{face maps} and \emphind{degeneracy maps} satisfying the simplicial relations, i.e.
\begin{align*}
\del_i\circ\del_j&=\del_{j-1}\circ\del_i, \text{ if } i<j\\
\sigma_i\circ \sigma_j&=\sigma_{j+1}\circ \sigma_i, \text{ if } i\leq j\\
\del_i\circ \sigma_j &= \begin{cases}
                    \sigma_{j-1}\circ\del_i , &\text{ if } i<j\\
		    \id , &\text{ if } i=j,j+1\\
		    \sigma_{j}\circ\del_{i-1} , &\text{ if } i>j+1
                   \end{cases}
\end{align*}\symindex[d]{\del_i}\symindex[s]{\sigma_i}
\begin{example}\label{ex:actionmanifold}
Our most important example of a simplicial manifold is the following (compare \cite[p.316]{Gomi},\cite[section 3.2]{Getzler}): Let the Lie group $G$ act smoothly on the manifold $M$. We define the simplicial manifold \symindex[g]{G^\bullet M} $G^\bullet\times M$ by
\[G^\bullet\times M = \{G^p\times M\}_{p\geq 0},\]
where $G^p$ stands for the $p$-fold Cartesian product of $G$. The face maps $G^p\times M\to G^{p-1}\times M$ are given as
\begin{align*}
\del_0(g_1, \dots , g_p, x) &= (g_2, \dots, g_p, x)\\
\del_i(g_1, \dots , g_p, x) &= (g_1, \dots , g_{i-1}, g_{i}g_{i+1},\dots , g_p, x)\text{ for } 1 \leq i \leq p - 1\\
\del_p(g_1, \dots , g_p, x) &= (g_1, \dots , g_{p-1}, g_p x)
\end{align*}
and the degeneracy maps for $i=0,\dots,p$ by
\begin{align*}
 \sigma_i: G^p\times M&\to G^{p+1}\times M\\
(g_1,\dots,g_p,x)&\mapsto (g_1,\dots,g_i,e,g_{i+1},\dots,g_p,x).
\end{align*}
These maps satisfy the simplicial relations.
\label{rm:action}
In particular, for $p=1$ the map $\del_1$ equals the group action, while $\del_0$ is the projection onto the second factor, i.e. onto $M$. 
\end{example}

To turn a simplicial manifold into a topological space, recall that the standard $n$-simplex is defined as
\[\Delta^n=\left\{t=(t_0,\dots,t_n)\in\Real^{n+1}\middle|t_i\geq0 \text{ for all } i, \sum_i t_i=1\right\}.\]
\begin{defn}[see, e.g., {\cite[p.75]{Dupont}}]
    The (fat) \emphind{geometric realization} of a simplicial manifold $M_\bullet$, is the topological space
 \[\|M_\bullet\|=\bigcup_{p\in\Natural} \Delta^p\times M_p/\sim\]
with the identifications
\[(\del^it,x)\sim(t,\del_i x)\text{ for any }x\in M_p,\, t\in\Delta^{p-1}, i=0,\dots,n \text{ and } p=1,2,\dots .\]
\end{defn}
\begin{example}
The geometric realization of the simplicial manifold $G^\bullet\times M$ is a model of $EG\times_G M$ and, in particular, if $M$ is single point the geometric realization of $G^\bullet\times pt$ is a model of the classifying space $BG$ (compare \cite[pp.75]{Dupont}).
\end{example}

\subsection{Simplicial de Rham cohomology}
As shown by Dupont in \cite[Section 6]{Dupont}, the cohomology of the geometric realization of a simplicial manifold equals the cohomology of the double complex of simplicial differential forms: Namely, let $M_\bullet=\{M_p\}$ be a simplicial manifold. For any $p$, complex valued differential forms\footnote{In the whole article differential forms, polynomials and coefficients will be complex for simplicity. All arguments hold for real numbers, too.} on $M_p$ form, together with the exterior derivative, the cochain complex $(\Omega^*(M_p),d)$. The face and degeneracy maps of $M_\bullet$ induce, via pullback, face and degeneracy maps between the differential forms on $M_p$ and $M_{p\pm 1}$. 

Thus on the bigraded collection of vector spaces
\[\Omega^{p,q}(M)=\Omega^q(M_p),\]
there is a horizontal differential $d:\Omega^{p,q}(M_\bullet)\to \Omega^{p,q+1}(M_\bullet)$, given by the exterior differential and vertical differential 
\[\del: \Omega^{p,q}(M_\bullet)\to \Omega^{p+1,q}(M_\bullet),\]
given by the alternating sum of pullbacks along the face maps
\begin{equation}\label{del}
\del(\omega)=\sum_{i=0}^{p+1} (-1)^i\del_i^*\omega.
\end{equation}
\begin{prop}  $(\Omega^{p,q}(M_\bullet),d,(-1)^q\del)_{p,q}$ is a double complex.
\end{prop}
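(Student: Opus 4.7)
The plan is to check the three standard axioms for a double complex: both $d^2=0$ and $(((-1)^q\del))^2=0$, and the anticommutation $d\circ((-1)^q\del) + ((-1)^q\del)\circ d = 0$. The first is immediate: on each $M_p$, $d$ is the ordinary exterior derivative, which squares to zero.

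For the second, I would observe that the overall sign $(-1)^q$ is constant along a single column of the bigraded complex (it only depends on $q$, while $\del$ raises $p$), so it suffices to show $\del^2=0$. Expanding, for $\omega\in\Omega^{p,q}(M_\bullet)$, one has $\del^2\omega=\sum_{i=0}^{p+2}\sum_{j=0}^{p+1}(-1)^{i+j}\del_i^*\del_j^*\omega$. I would split this into the parts with $i<j$ and $i\geq j$, and in the first part apply the simplicial identity $\del_j\circ\del_i=\del_i\circ\del_{j-1}$ (valid for $i<j$, obtained by dualising $\del^j\del^i=\del^i\del^{j-1}$) so that $\del_i^*\del_j^*=\del_{j-1}^*\del_i^*$. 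After a re-indexing of the summation variable, the contributions of the two halves cancel pairwise by the sign $(-1)^{i+j}$, exactly as in the familiar verification that the alternating face operator is a differential on any simplicial object in an additive category.

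For the anticommutation, naturality of the exterior derivative under smooth maps gives $\del_i^*\circ d = d\circ\del_i^*$ for every face map, hence $\del\circ d = d\circ\del$ as operators $\Omega^{p,q}(M_\bullet)\to\Omega^{p+1,q+1}(M_\bullet)$. The sign $(-1)^q$ is determined by the $q$-degree of the element the horizontal differential is being applied to. Applying first $d$ (raising $q$ to $q+1$) and then the horizontal differential produces $(-1)^{q+1}\del\circ d$, while the other order produces $(-1)^q d\circ\del$; since $\del\circ d = d\circ\del$, the two expressions are negatives of each other and their sum vanishes. I do not expect a substantive obstacle; the only subtlety is keeping the sign $(-1)^q$ attached to the correct $q$-degree in each composition, and once that bookkeeping is respected the three axioms follow directly from $d^2=0$, the simplicial identities, and naturality of $d$.
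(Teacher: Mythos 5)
Your proof is correct and follows essentially the same route as the paper's, which merely asserts the three ingredients ($d^2=0$, $\del^2=0$ by the simplicial identities, and $d\del=\del d$ by functoriality of $d$, with the sign $(-1)^q$ producing the anticommutation); you have simply written out the standard details. One bookkeeping remark: since pullback reverses composition, $\del_i^*\del_j^*=(\del_j\circ\del_i)^*$, so the dualised simplicial identity is most cleanly applied to the terms with $i>j$, where it reads $\del_i^*\del_j^*=\del_j^*\del_{i-1}^*$, rather than in the form $\del_i^*\del_j^*=\del_{j-1}^*\del_i^*$ for $i<j$ that you state -- this index slip does not affect the validity of the standard cancellation argument you invoke.
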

\begin{proof}
 $(d+(-1)^q\del)^2=0$, as the exterior derivative squares to zero, i.e. $d^2=0$ , $\del^2=0$ by the simplicial relations and $d\del=\del d$ as $d$ is functorial.
\end{proof}

Recall, that the cohomology of a double complex is defined to be the cohomology of its total complex, i.e., the cochain complex
\[\left(\bigoplus_{p+q=n}\Omega^{p,q}(M_\bullet),d+(-1)^q\del\right)_{n\in\Natural}.\]
In particular, for the simplicial manifold $G^\bullet\times M$, we have the double complex $\Omega^q(G^p\times M)$, what is a first de Rham type model for equivariant cohomology by the following Proposition.

\begin{prop}[Prop. 6.1 of {\cite{Dupont}}]\label{thm:realiso}
    Let $M_\bullet$ be a simplicial manifold. There is a natural isomorphism \[H^*(\Omega^{\bullet,*}(M_\bullet),d+(-1)^*\del)\cong H^*(\|M_\bullet\|,\C),\]
    between the cohomology of the de Rham double complex of simplicial differential forms and the cohomology of the geometric realization. 
\end{prop}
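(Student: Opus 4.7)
The plan is to interpolate between the two complexes via an intermediate \emph{Dupont complex} of compatible smooth forms on the simplices of the realization, and to show that this intermediate complex is quasi-isomorphic to both sides.

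Concretely, let $A^n(M_\bullet)$ consist of families $\omega=\{\omega_p\}_{p\geq 0}$ with $\omega_p\in\Omega^n(\Delta^p\times M_p)$ satisfying the compatibility
\[(\del^i\times\id_{M_p})^*\omega_p=(\id_{\Delta^{p-1}}\times\del_i)^*\omega_{p-1}\qquad(i=0,\dots,p),\]
together with an analogous degeneracy condition. Using the K\"unneth splitting $\Omega^n(\Delta^p\times M_p)=\bigoplus_{i+j=n}\Omega^i(\Delta^p)\otimes\Omega^j(M_p)$, the complex $A^*(M_\bullet)$ acquires a bigrading with one differential $d$ coming from $M_p$ and the other from $\Delta^p$.

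Step 1: Compare $A^*(M_\bullet)$ with $\tot(\Omega^{\bullet,*}(M_\bullet))$ via fiber integration. Sending $\omega_p$ to its component of $\Delta^p$-degree exactly $p$, integrated over $\Delta^p$, gives a map
\[I\from A^n(M_\bullet)\longrightarrow \bigoplus_{p+q=n}\Omega^q(M_p),\]
and Stokes' theorem together with the coface compatibility turns the boundary integral $\int_{\del\Delta^p}$ into the alternating sum $\sum(-1)^i\del_i^*$, so $I$ intertwines the total differentials. To see $I$ is a quasi-isomorphism, I would filter both complexes by the column index $p$: on the $E_1$-page the source becomes $H^q(\Delta^p\times M_p)$-classes compatible across faces, and the Poincar\'e lemma on $\Delta^p$ shows $I$ induces an isomorphism to $H^q_{\dR}(M_p)$ in bidegree $(p,q)$, after which the horizontal differentials coincide by construction.

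Step 2: Compare $A^*(M_\bullet)$ with $C^*(\|M_\bullet\|,\C)$. One defines a map to singular cochains by integration over singular simplices; the compatibility condition is exactly what is needed for the integral to descend along the identifications defining $\|M_\bullet\|$. That this is a quasi-isomorphism is a de Rham-type theorem on the geometric realization, which I would prove by Mayer-Vietoris applied to the skeletal filtration $\|M_\bullet\|^{(p)}=\bigcup_{q\le p}\Delta^q\times M_q/\!\sim$: on each successive difference the local statement reduces to the classical Poincar\'e/de Rham lemma on $\Delta^p\times M_p$ relative to $\del\Delta^p\times M_p$, and a standard sheaf-theoretic induction patches the pieces together. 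Naturality in $M_\bullet$ is immediate from the functorial construction of all three complexes. Composing the inverse of the quasi-isomorphism of Step~2 with the one of Step~1 delivers the claimed natural isomorphism.

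The main obstacle is Step~2: the geometric realization $\|M_\bullet\|$ is not a manifold, so the de Rham theorem has to be set up carefully with a version of partition of unity subordinate to the skeletal filtration and an inductive Mayer-Vietoris argument comparing $A^*$ with singular cochains stratum by stratum. Step~1, by contrast, is a fairly mechanical filtered double-complex argument once the integration map is written down and its compatibility with $d+(-1)^q\del$ has been checked via Stokes.
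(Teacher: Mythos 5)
Your argument is essentially correct, but it routes through Dupont's complex $\mathcal A^*(M_\bullet)$ of compatible forms on $\Delta^p\times M_p$, which is not how the paper proceeds. The paper's proof stays entirely with the double complex $\Omega^{p,q}(M_\bullet)=\Omega^q(M_p)$ and compares it, column by column, with a \emph{simplicial singular} double complex $S^{p,q}(M_\bullet)=S^q(M_p)$ built from smooth singular simplices in the manifolds $M_p$; the map is integration of pullback forms over singular simplices, and it is a quasi-isomorphism in each column by the classical de Rham theorem on the manifold $M_p$. The identification of $H^*(\tot S^{\bullet,\bullet})$ with $H^*(\|M_\bullet\|,\C)$ is then a purely simplicial-topological matter (realization of the bisimplicial set $S_q(M_p)$, the homotopy equivalence $\|q\mapsto S_q(M_p)\|\simeq M_p$, and the identification of the total complex with singular cochains of the realization). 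The point of that design is precisely to avoid what you identify as your main obstacle: a de Rham theorem on the stratified, non-manifold space $\|M_\bullet\|$. Your Step 1 is sound (it is Dupont's Theorem 6.4, which the paper quotes separately and needs later anyway), but your Step 2 carries the analytic burden the paper sidesteps, and as stated it has a soft spot: one cannot literally integrate a form over an arbitrary continuous singular simplex of $\|M_\bullet\|$, so the cochain map must be defined via smooth simplices or replaced outright by the skeletal-filtration comparison you sketch, including a $\varprojlim^1$-type argument for the union over all skeleta. Both routes work; yours buys the quasi-isomorphism with $\mathcal A^*(M_\bullet)$ as a by-product, while the paper's confines all analysis to honest manifolds.
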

\begin{proof}
    The detailed proof can be found in \cite{Dupont}. We will give a sketch for completeness: Introduce the simplicial singular double complex $S^{p,q}(M_\bullet)$. Therefore, let, for some manifold $M$, $S_q(M)$ denote the set of all smooth singular $q$-simplices in $M$, i.e.  smooth maps from the standard $q$-simplex $\Delta^q$ to $M$. The maps from the set $S_q(M)$ to $\C$ form a vector space, denoted by $S^q(M)$, and the alternating sum over the restrictions to the faces of $\Delta^q$ yields a coboundary map $\delta$ on $S^q(M)$. Define $S^{p,q}(M_\bullet)\defeq S^q(M_p)$ with horizontal boundary map $\delta$. Since the face maps of the simplicial manifold are smooth, they turn a smooth singular simplex in $M_p$ into a smooth singular simplex in $M_{p-1}$. Thus yield, by linear extension, a homomorphism $S^q(M_{p-1})\to S^q(M_p)$. The alternating sum over this homomorphisms is the vertical coboundary map $\del$.
    
    Now check that 
    \begin{enumerate}
        \item The geometric realization of the bisimplicial set $S(M)=S_q(M_p)$ is homeomorphic to the realization of the simplicial space $p\mapsto\|q\mapsto S_q(M_p)\|$;
        \item $\|q\mapsto S_q(M_p)\|$ is homotopy equivalent to $M_p$;
        \item the total complex $\bigoplus_{p+q=n} S^{p,q}(M_\bullet)$ with coboundary map $\delta+(-1)^q\del$ is the singular chain complex of $\|S(M)\|$.
    \end{enumerate}
	This implies that $H^*(S^{p,q}(M_\bullet),\delta,(-1)^q\del)=H^*(\|M_\bullet\|,\C)$.
	
	On the other hand, integration of the pullback form over the simplex, induces a map 
	$\Omega^{p,q}(M_\bullet)\to S^{p,q}(M_\bullet)$,
	which is, as easily checked, a map of double complexes. Moreover, as in the non-simplicial situation, it induces an isomorphism in cohomology. 
\end{proof}

\subsection{The Cartan model}\label{sec:Cartan}
A well-known de Rham-like model for equivariant cohomology goes back to Henri Cartan (\cite{Cartan}). Our Exposition follows \cite{Libine}. 
Let $G$ be a compact Lie group acting smoothly on the smooth manifold $M$ and denote the Lie algebra of $G$ by $\g=T_eG$. Let $S^*(\g\dual)$ be the symmetric tensor algebra of the (complex) dual of the Lie algebra $\g\dual$. The group $G$ acts on this algebra by the coadjoint action and on $\Omega^*(M)$ by pulling back forms along the action map. Hence we have a $G$-action on $S^*(\g\dual)\otimes\Omega^*(M)$. The invariant part of this algebra, $(S^*(\g\dual)\otimes\Omega^*(M))^G$, is what one calls the \emphind{Cartan complex} and is denoted by $\Omega_G^*(M)$\symindex[o]{\Omega_G^*(M)}. In other words: The Cartan complex consists of $G$-equivariant polynomial maps $\omega\from \g\to \Omega^*(M)$. Let $\omega_1,\omega_2\in \Omega_G^*(M)$, then there is a wedge product 
\[(\omega_1\wedge \omega_2)(X)=\omega_1(X)\wedge \omega_2(X).\]
On this algebra a differential is defined by
\[d_C \omega (X)=d(\omega(X))+\iota(X^\sharp)\omega(X),\]
for $\omega\in \Omega_G^*(M)$ and $X\in \g$,
i.e., by the sum of the exterior differential on the manifold and the contraction with the fundamental vector field: For any $X\in\g$ and $m\in M$, define $X^\sharp_m=\ddt (e^{tX}\cdot m)\in T_mM$. To make this differential raise the degree by one, the grading on $\Omega_G^*(M)$ is given by
\[\text{twice the polynomial degree} + \text{the differential form degree}.\]
\begin{lemma}
    $(\Omega^*_G,d_C)$ is a cochain complex.
\end{lemma}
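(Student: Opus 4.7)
The plan is to verify three things: (i) $d_C$ raises total degree by one under the grading ``twice polynomial degree plus form degree'', (ii) $d_C$ preserves $G$-invariance, so it really maps $\Omega_G^*(M)$ to itself, and (iii) $d_C^2=0$ on the invariant subcomplex.

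For (i), if a monomial $\omega$ has polynomial degree $k$ and form degree $\ell$, hence total degree $2k+\ell$, then $d(\omega(X))$ has polynomial degree $k$ and form degree $\ell+1$, while the contraction $\iota(X^\sharp)\omega(X)$ has polynomial degree $k+1$ (since an extra factor of $X$ is absorbed from the vector field) and form degree $\ell-1$. Both summands therefore have total degree $2k+\ell+1$. For (ii), I would use that the exterior derivative commutes with pullbacks and that fundamental vector fields transform as $g_*(X^\sharp)=(\Ad(g)X)^\sharp$; then each of the two summands of $d_C\omega$ satisfies the same $G$-equivariance property as $\omega$ itself.

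The substantive step is (iii). Expanding,
\[
d_C^2\omega(X) \;=\; d^2(\omega(X)) \;+\; d\,\iota(X^\sharp)\omega(X) \;+\; \iota(X^\sharp)\,d(\omega(X)) \;+\; \iota(X^\sharp)^2\omega(X).
\]
The first term vanishes since $d^2=0$, and the last term vanishes because contraction with a single vector field squares to zero. The two remaining terms combine via Cartan's magic formula to the Lie derivative $\mathcal{L}_{X^\sharp}\omega(X)$. The main obstacle is to show this Lie derivative is zero, and this is precisely where $G$-invariance of $\omega$ enters: the invariance condition reads $(e^{tY})^*\omega(\Ad(e^{tY})X)=\omega(X)$ for all $Y\in\g$ and $t\in\Real$. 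Specializing to $Y=X$ and using $\Ad(e^{tX})X=X$ yields $(e^{tX})^*\omega(X)=\omega(X)$; differentiating at $t=0$ gives $\mathcal{L}_{X^\sharp}\omega(X)=0$, which completes the proof.

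Beyond this one cancellation, everything is bookkeeping with the bigrading and with the standard identities for $d$, $\iota$, and $\mathcal{L}$; the real content is that the Cartan differential is only nilpotent after restricting to invariants, with $d_C^2$ measuring exactly the failure of $G$-invariance.
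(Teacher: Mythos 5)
Your proposal is correct and follows essentially the same route as the paper: the same degree count, the same equivariance check, and the same key step of expanding $d_C^2$ via the Cartan formula to obtain $L_{X^\sharp}\omega(X)$, which vanishes by specializing the invariance condition to the one-parameter subgroup generated by $X$ and using $\Ad(e^{tX})X=X$. No gaps.
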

\begin{proof}
First, observe that $d_C$ increases the total degree by one, since $d$ increases the differential form degree, and the contraction $\iota$, while decreasing the form degree by one, increases the polynomial degree by one. Next, one has to check, that the differential really maps invariant forms to invariant forms and that it squares to zero.

Let $\omega\in \Omega_G^*(M))$ and $X\in \g$. 
\begin{align*}
d_C \omega(\Ad_g X)&=d(\omega(\Ad_g X))+\iota((\Ad_g X)^\sharp)\omega(\Ad_g X)\\
&=d(g \omega(X))+\iota(g X^\sharp g\invers) g(\omega(X))\\
&=g d(\omega(X))+g\iota( X^\sharp ) g\invers g(\omega(X))\\&=gd_C \omega 
\end{align*}
Thus $d_C \omega$ is $G$-equivariant. Moreover, we have
\[d_C^2\omega(X)= d^2\omega(X)+d\iota(X^\sharp )\omega(X)+\iota(X^\sharp )d\omega(X)+\iota(X^\sharp )^2\omega(X)=L_{X^\sharp}\omega(X),\]
by the (Élie) Cartan formula and
\[L_{X^\sharp}\omega(X)=\ddt\hspace{-8pt}\exp(tX)\omega(X)=\ddt\hspace{-8pt} \omega(\exp(-tX)X\exp(tX))=\ddt \hspace{-8pt}\omega(X)=0.\]
Thus $d_C$ squares to zero, i.e., it is a boundary operator.
\end{proof}

In the special case of $M=pt$, i.e., of a single point, the Cartan algebra reduces to the algebra of \emph{invariant symmetric polynomials} \index{invariant symmetric polynomials}\symindex[i]{I^k(G)}
\[I^k(G)=((S^*(\g\dual)\otimes\Omega^*(pt))^G)^k=(S^k(\g\dual))^G.\]

\subsection{Getzlers resolution}\label{sec:Getzler}
In order to investigate cohomology of actions of non-compact groups, Ezra Getzler \cite[Section 2]{Getzler} defines a bar-type resolution of the Cartan complex. We will apply his ideas slightly different: The complex defined by Getzler will allow us to compare the de Rham model on the simplicial manifold $G^\bullet\times M$ with the Cartan model.

Let, as before, a Lie group $G$ act on a smooth manifold $M$ from the left. Define $\C$-vector spaces $C^p(G,S^*(\g\dual)\otimes \Omega^*(M))$ consisting of smooth maps, from the $p$-fold Cartesian product
\[G^p\to S^*(\g\dual)\otimes \Omega^*(M),\]
to the space of polynomial maps from $\g$ to differential forms on $M$. These groups come with three gradings: The differential form degree on $M$, the polynomial degree and the number $p$ of copies of $G$, which we call simplicial degree of this complex. The Cartan boundary operator $d+\iota$ now induces a map $(-1)^p(d+\iota)$ which now, as we are not restricted to the $G$-invariant part of $S^*(\g\dual)\otimes \Omega^*(M)$, will not square to zero, but \[\left((-1)^p(d+\iota)\right)^2=d\iota+\iota d=L\] is the Lie derivative, by the Cartan formula. The `simplicial' degree $p$ is increased by the `simplicial' boundary map
\[\bar d\from C^k(G,S^*(\g\dual)\otimes \Omega^*(M))\to C^{k+1}(G,S^*(\g\dual)\otimes \Omega^*(M))\]
defined by
\begin{multline*}
 (\bar d f)(g_0,\dots,g_k|X)\defeq f(g_1,\dots,g_k|X)+ \sum_{i=1}^k (-1)^i f(g_0,\dots,g_{i-1}g_i,\dots,g_k|X)\\+(-1)^{k+1}g_kf(g_0,\dots,g_{k-1}|\Ad(g_k\invers)X)
\end{multline*}
for $g_0,\dots,g_k\in G$ and $X\in \g$.

Note, in particular, that the kernel of
\[\bar d\from C^0(G,S^*(\g\dual)\otimes \Omega^*(M))\to C^1(G,S^*(\g\dual)\otimes \Omega^*(M))\] 
is exactly $\Omega_G^*(M)$. Moreover, in case of a discrete Group $G$, $\g=0$ and thus one checks, that 
\[C^p(G,S^*(\g\dual)\otimes \Omega^*(M))=C^p(G,\Omega^*(M))= \Omega^{p,*}(G^\bullet\times M)\] and $\bar d$ is equal to $\del$.

In the case of a compact Lie group, the map $\bar d$ admits a contraction (compare, e.g., \cite[322]{Gomi}):
\begin{lemma}\label{lem:IntGroupGetzler} Integration over the group, with respect to a right invariant probability measure, defines a map 
\begin{align}
\int_G:C^p(G,S^*(\g\dual)\otimes \Omega^*(M))&\to C^{p-1}(G,S^*(\g\dual)\otimes \Omega^*(M)) \label{eq:IntGroupGetzler}\\
\left(\int_G f\right)(g_1,\dots,g_{p-1},m)&=(-1)^i\int_{g\in G} f(g,g_1,\dots,g_{p-1},m)dg\nonumber
\end{align}
such that $\bar d \int_G f=f$ if $\bar d f=0$.
\end{lemma}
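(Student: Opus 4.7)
The plan is to establish the stronger chain-homotopy identity
\[
\bar d \circ \textstyle\int_G + \int_G\circ\bar d = \id
\]
on all of $C^{\bullet}(G, S^*(\g\dual)\otimes \Omega^*(M))$; the claim of the lemma then follows by restricting to $\bar d$-cocycles. The argument is entirely formal, using only right-invariance of the measure $dg$, the normalization $\int_G dg = 1$, and the fact that the pointwise $G$-action on forms and the adjoint action on $\g$ commute with integration in a different $G$-variable.

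The central computation will be to write out $(\bar d f)(g, g_0, \dots, g_{p-1}|X)$ with an auxiliary variable $g\in G$ inserted in the $0$-th slot, and then to integrate over $g$ against $dg$. The leading summand $f(g_0, \dots, g_{p-1}|X)$ is independent of $g$ and integrates to itself. The $i=1$ summand $-f(gg_0, g_1, \dots, g_{p-1}|X)$ is the unique term in which $g$ and $g_0$ get merged; right-translating $g\mapsto gg_0\invers$ in the integral turns it into $-\int_G f(g, g_1, \dots, g_{p-1}|X)\,dg$, which, up to sign, is precisely the first summand of $\bar d(\int_G f)(g_0,\dots,g_{p-1}|X)$. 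Each of the remaining summands in $\bar d f$ has $g$ only in its first slot and, after integration, matches term by term the corresponding summand of $\bar d(\int_G f)$; in the last summand the factor $g_{p-1}$ acting on the $\Omega^*(M)$-component and the shift $\Ad(g_{p-1}\invers)X$ on the $\g$-variable pass through the integral because neither depends on $g$.

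The only real difficulty is bookkeeping. One must pick the overall sign in the definition of $\int_G$ so that after the shift $g\mapsto gg_0\invers$ the $i=1$ term reproduces the leading term of $\bar d\int_G f$ with the correct sign, and then verify that the face-type summands and the Lie-algebra (adjoint) summand line up under the reindexing $i\mapsto i$ that sends the slot $g_{i-1}$ to $g_i$. Once the signs are matched, rearranging the resulting integrated identity directly yields the chain-homotopy formula, and setting $\bar d f = 0$ gives the lemma.
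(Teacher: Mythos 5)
Your proposal is correct and is essentially the paper's own argument: the paper's proof is just the remark that the lemma follows from ``a direct calculation'' using invariance of the measure, and the calculation you outline -- integrating $(\bar d f)(g,g_0,\dots,g_{p-1}|X)$ over the auxiliary variable $g$, using right-invariance to handle the single summand $f(gg_0,g_1,\dots,g_{p-1}|X)$, and matching the remaining summands of $\bar d f$ against those of $\bar d\int_G f$ -- is exactly that calculation, and the signs do work out as you claim. Packaging it as the chain-homotopy identity $\bar d\int_G+\int_G\bar d=\id$ (valid for $p\geq 1$) is a mild strengthening of the stated lemma, since it shows the vertical cohomology vanishes in positive simplicial degree rather than merely that cocycles are coboundaries.
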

\begin{proof}This is proven by a direct calculation, which makes use of the left invariance of the measure.
\end{proof}

Thus, for compact groups, the vertical cohomology of this bi-graded collection of groups is the Cartan complex. \\[2ex]
One can turn the triple-graded collection $C^p(G,S^*(\g\dual)\otimes \Omega^*(M))$ of groups into a double complex. Therefore Getzler defines another map,
\[\bar{\iota} \from C^p(G,S^l(\g\dual)\otimes \Omega^m(M))\to C^{p-1}(G,S^{l+1}(\g\dual)\otimes \Omega^m(M)),\]
given by the formula
\[(\bar{\iota} f)(g_1,\dots,g_{p-1}|X)\defeq \sum_{i=0}^{p-1}(-1)^i \ddt f(g_1,\dots,g_i,\exp(tX_i),g_{i+1},\dots,g_{p-1}|X),\]
where $X_i=\Ad(g_{i+1}\dots g_{p-1})X$. 
 \begin{lemma}[Lemma 2.1.1. of {\cite{Getzler}}]The map $\bar{\iota}$ has the following properties:
  \[\bar{\iota}^2=0 \text{ and }\bar d\bar\iota+ \bar\iota\bar d= -L.\]
 \end{lemma}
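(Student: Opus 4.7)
The plan is to prove both identities by direct computation, organizing the resulting double sums by the relative position of the inserted exponentials in the argument list of $f$.

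For $\bar\iota^2=0$: I would substitute the definition into itself, obtaining a double sum indexed by pairs $(j,i)$, where the outer derivative at $s=0$ inserts $\exp(sY_j)$ at position $j+1$ and the inner derivative at $t=0$ inserts $\exp(tZ_i)$ at position $i+1$ of the resulting word. The sum then splits into non-adjacent cases ($i<j$ and $i>j+1$) and adjacent cases ($i\in\{j,j+1\}$). In the non-adjacent cases each term with index $(j,i)$ cancels the term obtained by swapping the roles of the two insertions, i.e.\ by renaming the position at which $\exp(s\cdot)$ sits and the position at which $\exp(t\cdot)$ sits; the combinatorial pattern mirrors the proof that the bar differential squares to zero, and the signs $(-1)^i(-1)^j$ produce the required cancellation after the reindexing. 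In the adjacent cases ($i=j$ and $i=j+1$) the two exponentials sit next to each other in the argument list of $f$, and since $\partial/\partial s|_{s=0}$ and $\partial/\partial t|_{t=0}$ commute, the two terms cancel against each other.

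For $\bar d\bar\iota+\bar\iota\bar d=-L$: I would expand both compositions term-by-term. Most summands are of the same simplicial shape as the terms appearing in $\bar\iota^2$ and cancel between $\bar d\bar\iota$ and $\bar\iota\bar d$ exactly as in the proof that $\bar d^2=0$ on the bar complex, with an extra sign accounting for the fact that $\bar\iota$ lowers the simplicial degree while $\bar d$ raises it. The terms that survive come from the boundary of the simplicial word: the interaction of the inserted $\exp(tX_i)$ with the leading summand $f(g_1,\dots,g_k|X)$ of $\bar d$, and with the trailing summand $g_k f(g_0,\dots,g_{k-1}|\Ad(g_k\invers)X)$. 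For an interior insertion the derivative at $t=0$ just differentiates a smooth argument of $f$ and produces the contraction $\iota(X^\sharp)$; for the trailing summand it differentiates the $G$-action on $\Omega^*(M)$ and produces the Lie derivative $L_{X^\sharp}$ by definition of the fundamental vector field. The Cartan magic formula $L_{X^\sharp}=d\iota(X^\sharp)+\iota(X^\sharp)d$ then reassembles the surviving pieces into $-L$.

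The main obstacle will be the bookkeeping of surviving terms and signs, in particular the interplay between the $\Ad$-conjugations inside $Y_j$ and $Z_i$ and the pairings used in the non-adjacent cancellation. This compatibility rests on $\Ad(gh)=\Ad(g)\Ad(h)$ and on the fact that all inserted exponentials equal the identity at $s=t=0$, so the $\Ad$-factors match between paired terms up to terms of order $s$ or $t$, which are killed by taking only the first derivative at zero.
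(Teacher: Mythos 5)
Your overall strategy --- expand both compositions and cancel the resulting double sums in pairs, as for the bar differential --- is exactly the ``recollection of the sums'' that the paper (following Getzler) has in mind, and your treatment of $\bar\iota^2=0$ is essentially on track. One justification there is off, however: you claim the $\Ad$-twists match between paired terms ``up to terms of order $s$ or $t$, which are killed by taking only the first derivative at zero.'' A first-order $t$-dependence in the direction vector of the $s$-insertion would \emph{not} be killed by the mixed partial at $s=t=0$ --- that is precisely what the mixed partial picks out. The correct reason the twists cause no trouble is that the inserted element acts trivially on its own direction: $\Ad(\exp(tX_i))X_i=X_i$ identically in $t$, i.e.\ $[X_i,X_i]=0$, so the direction vectors of paired terms agree exactly. (This is the same device the paper uses explicitly in the proof that $\mathcal J$ intertwines the differentials.)

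The second identity is where the real gap lies. The operators $\bar d$ and $\bar\iota$ only manipulate the group arguments, the polynomial variable $X$, and the $G$-action on the coefficients; they never touch the de Rham structure on $M$. Hence an ``interior insertion'' cannot produce the contraction $\iota(X^\sharp)$, and the Cartan magic formula $L_{X^\sharp}=d\iota(X^\sharp)+\iota(X^\sharp)d$ plays no role in this lemma --- it is the proof of the \emph{other} identity $\left((-1)^p(d+\iota)\right)^2=L$ from the same section, which you have conflated with this one. In the correct computation, all interior terms cancel pairwise between $\bar d\bar\iota$ and $\bar\iota\bar d$; the $i=0$ insertion meeting the leading summand of $\bar d$ contributes zero (after the face map drops $\exp(tX_0)$ the expression is $t$-independent); and the single surviving term is the $i=p$ insertion $\exp(tX_p)=\exp(tX)$ meeting the trailing summand of $\bar d$, which yields $(-1)^p(-1)^{p+1}\ddt\exp(tX)f(g_1,\dots,g_p|\Ad(\exp(-tX))X)=-Lf$ in one piece, using $\Ad(\exp(-tX))X=X$. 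So your endpoint is right, but the mechanism you propose for assembling $-L$ would not go through as written.
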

\begin{proof}
 This is shown in \cite{Getzler} by recollection of the sums in the definition of $\bar{\iota}$ and $\bar d$.
\end{proof}

Moreover one obtains:
\begin{lemma}[Corollary 2.1.2. of {\cite{Getzler}}]
 $d_G= \bar d + \bar \iota + (-1)^p(d + \iota)$ is a boundary operator on the total complex $\bigoplus_{p+2q+r=n} C^p(G,S^q(\g\dual)\otimes \Omega^r(M))$.
\end{lemma}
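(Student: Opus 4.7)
The plan is to prove $d_G^2 = 0$ by direct expansion, organizing the terms in $(\bar d + \bar\iota + (-1)^p\delta)^2$, where I abbreviate $\delta \defeq d + \iota$. First I note that each of the four summands of $d_G$ raises the total degree $p + 2q + r$ by one: $\bar d$ raises $p$ by $1$; $\bar\iota$ drops $p$ by $1$ and raises $q$ by $1$; $d$ raises $r$ by $1$; and $\iota$ raises $q$ by $1$ and drops $r$ by $1$. Hence $d_G$ has total degree $+1$ on the proposed total complex.

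For the vanishing of $d_G^2$, I apply $d_G$ twice to a homogeneous element $f \in C^p$, taking care that the sign $(-1)^p$ in the third summand is recomputed at each application, so it flips to $(-1)^{p+1}$ on $\bar d f$ and to $(-1)^{p-1}$ on $\bar\iota f$. A straightforward reorganization of the nine resulting terms gives
\[d_G^2 f \;=\; \bigl(\bar d^2 + \bar\iota^2 + \bar d\bar\iota + \bar\iota\bar d + \delta^2\bigr)f \;+\; (-1)^p\bigl([\bar d,\delta] + [\bar\iota,\delta]\bigr)f.\]
The first parenthesis vanishes by combining the previous lemma ($\bar\iota^2 = 0$ and $\bar d\bar\iota + \bar\iota\bar d = -L$) with the É.\ Cartan formula $\delta^2 = L$, together with $\bar d^2 = 0$; the last identity is the standard sign-cancellation for an inhomogeneous bar-type differential and I would check it by a short computation separating the three kinds of terms in the defining formula of $\bar d$.

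It remains to show $[\bar d, \delta] = [\bar\iota, \delta] = 0$. The commutations with $d$ are immediate because $d$ differentiates only in the manifold direction, while $\bar d$ and $\bar\iota$ only operate on the group and polynomial variables, and pullbacks along group-theoretic maps commute with the exterior derivative on $M$. For $[\bar\iota, \iota]$, the contraction $\iota(X^\sharp)$ just factors out of the $t$-derivative in the defining sum of $\bar\iota$, since $X^\sharp$ depends on $X$ alone and not on the $g_i$ being varied. The only delicate point, and the real technical obstacle, is $[\bar d, \iota]$: the last summand of $\bar d$ has the form $(-1)^{k+1}g_k f(\ldots|\Ad(g_k\invers)X)$, in which $g_k$ acts on forms through the $G$-action on $M$ and simultaneously twists $X$ by $\Ad(g_k\invers)$. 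Matching it against the corresponding term $\iota(X^\sharp) g_k f(\ldots|\Ad(g_k\invers)X)$ in $\iota\bar d f$ requires the intertwining identity $g_k\circ\iota\bigl((\Ad(g_k\invers)X)^\sharp\bigr) = \iota(X^\sharp)\circ g_k$, which in turn follows from $(L_g)_*X^\sharp = (\Ad_g X)^\sharp$ and is precisely the equivariance property used earlier to show that $d_C$ preserves invariant forms. Once this single equivariance identity is in hand, the remainder of the argument is bookkeeping of signs.
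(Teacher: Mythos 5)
Your proof is correct and follows essentially the same route as the paper: the same degree count, the same expansion of $d_G^2$ into $(\bar d+\bar\iota)^2+(d+\iota)^2$ plus cross terms killed by the commutation of $d$ and $\iota$ with $\bar d$ and $\bar\iota$, and the same final cancellation $-L+L=0$ via Getzler's Lemma 2.1.1 and the Cartan formula. You merely spell out the commutator identities (in particular the $\Ad$-twisted case $[\bar d,\iota]$) in more detail than the paper, which asserts them directly from equivariance.
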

\begin{proof} $d_G$ increases the total index by one, as $\bar d$ increases the first index, $d$ increases the third index, $\iota$ decreases the third, while it is increasing the second index and $\bar\iota$ decreases the first index, while it is increasing the second one.

As $d$ and $\iota$ are equivariant under the $G$-action, they commute with $\bar d$. And as $d$ and $\iota$ only act on the manifold $M$ and not on the group part, the same is true for $\bar\iota$. Thus
 \begin{align*}
  d_G^2&=(\bar d + \bar\iota)^2 +(-1)^p(\bar d + \bar \iota)(d + \iota) +(-1)^{p\pm 1}(d + \iota)(\bar d + \bar \iota)+ (d + \iota)^2\\
&=\bar d\bar \iota+ \bar\iota \bar d + (d\iota+\iota d)\\
&=-L + L = 0.
 \end{align*}
\end{proof}
The proof implies that 
\[\left(\left(\bigoplus_{\substack{k+l=p\\l+m=q}} C^k(G,S^l(\g\dual)\otimes \Omega^m(M))\right)_{\!\!\!p,q}\hspace{-.5em},\bar d+(-1)^k\iota,(-1)^kd+\bar\iota\right)\] is a double complex.
\begin{remark}
    The reader, who compares this with the original paper of Getzler will note that we changed some signs. It just seems more natural to us in this way. Furthermore Getzler uses some reduced subcomplex, which is, by standard arguments on simplicial modules (compare Proposition 1.6.5 in \cite{Loday}), quasi-isomorphic to the full complex, which we have taken.
\end{remark}

\subsection{A quasi-isomorphism}\label{sec:quasiiso}
The map we discuss below, defined in \cite[Section 2.2.]{Getzler}, relates the double complex $C^*(G,S^*(\g\dual)\otimes \Omega^*(M))$ to the de Rham double complex $\Omega^*(G^\bullet\times M)$. Thus we have an explicit identifications of chains in the one complex with chains in the other complex. This will be of particular interest to us in the discussion of equivariant characteristic forms (Section \ref{sec:Dupontforms}).

\begin{defn}[Def. 2.2.1. of \cite{Getzler}] \label{def:Getzlermap} The map 
\[\mathcal J\from \Omega^q(G^p\times M)\to \bigoplus_{\substack{k+l=p\\l+m=q}} C^k(G,S^l(\g\dual)\otimes \Omega^m(M))\] \symindex[j]{\mathcal J} \index{Getzler's map} is defined by the formula
 \[\mathcal J(\omega)(g_1,\dots,g_k|X)\defeq\sum_{\pi\in S(k,p-k)} \sgn(\pi)\left(i_\pi\right)^*\left(\iota_{\pi(k+1)}(X^{(\pi)}_{k+1})\dots\iota_{\pi(p)}(X^{(\pi)}_p)\omega\right).\]
Here $S(k,p-k)$ is the set of shuffles, i.e., permutations $\pi$ of $\{1,\dots,p\}$, satisfying
\[\pi(1)<\dots<\pi(k) \text{ and } \pi(k+1)<\dots<\pi(p),\]
$X^{(\pi)}_j=\Ad(g_{m}\dots g_{k})X$, where $m$ is the least integers less than $k$, such that $\pi(j)<\pi(m)$,
$\iota_j$ means, that the Lie algebra element should be a tangent vector at the $j$-th copy of $G$,
and $i_\pi\from G^k\times M\to G^p\times M$ is the inclusion $x\mapsto (h_1,\dots,h_p,x)$ with \[h_j=\begin{cases}g_m&\text{if } j=\pi(m), 1\leq m\leq k\\ e\in G &\text{otherwise,}\end{cases}\] which is covered by the bundle inclusion $TM\to T(G^p\times M)$.
\end{defn}
Observe that the image of $\omega$ under $\mathcal J$ does only depend on the zero form part and, in direction of any copy of $G$, on the one form part at the identity $e\in G$.

The next Lemma -- which is mainly a citation of \cite[Lemma 2.2.2.]{Getzler}, but with signs corrected -- shows, that the map $\mathcal J$ can be interpreted as a map of double complexes.
\begin{lemma}The map $\mathcal J$ respects the boundaries with the correct sign, i.e.,
 \[\mathcal J \circ \del = \left(\bar d+(-1)^p\iota\right) \circ\mathcal J\] and, after decomposing $d=d_G+d_M$ with respect to the Cartesian product $G^p\times M$
 \[\mathcal J \circ \left(\left(-1\right)^p d_M \right)= (-1)^{p'} d \circ \mathcal J \text{ and }\mathcal J \circ \left(-1^p\right)d_G = \bar\iota \circ \mathcal J,\]
where $p$ is the simplicial degree before and $p'$ the simplicial degree after application of the map $\mathcal J$,
\end{lemma}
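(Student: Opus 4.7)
The plan is to prove all three identities by a direct unfolding of definitions, evaluating both sides on a test form $\omega\in\Omega^q(G^p\times M)$ at a generic point $(g_1,\dots,g_{k'}|X)$ in the appropriate multi-degree of the Getzler complex and matching the resulting multi-index sums. The geometric ingredients are elementary: the inclusions $i_\pi$ appearing in \autoref{def:Getzlermap} intertwine naturally with the face maps $\del_i$ of $G^\bullet\times M$ and with the interior products $\iota_j(X^{(\pi)}_j)$, and the infinitesimal action of a $G$-factor at the identity is by definition the fundamental vector field $X^\sharp$ on $M$.

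For the first identity, I would expand $\mathcal{J}(\del\omega)=\sum_{i=0}^{p+1}(-1)^i\mathcal{J}(\del_i^*\omega)$ and classify the contribution of each face map. The projection $\del_0$, which drops $g_1$, reassembles after shuffle-reindexing to the first summand $f(g_1,\dots,g_{k'}|X)$ of $\bar d\,\mathcal{J}(\omega)$. For $1\le i\le p$ the maps $\del_i$ multiply adjacent group coordinates, and a short combinatorial analysis, splitting shuffles according to whether positions $i,i+1$ fall in the inserted or the contracted half, shows that only the mixed configurations survive and that they reproduce the middle sum of $\bar d\,\mathcal{J}(\omega)$. The terminal face $\del_{p+1}$ contributes two kinds of terms: when its slot is used as a $g$-insertion, the $g_{p+1}$-action on $M$ combined with the propagation of $X^{(\pi)}$ through produces the last summand of $\bar d\,\mathcal{J}(\omega)$ carrying $\Ad(g_{k'}\invers)X$; when $p+1$ sits in the contracted half, differentiation of the action at the identity converts the interior product in the $G$-direction into $\iota(X^\sharp)$ on $M$, producing the term $(-1)^p\iota\,\mathcal{J}(\omega)$.

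For the two identities involving $d_M$ and $d_G$, the key point is that $\mathcal{J}$ differentiates $\omega$ only after pullback through $i_\pi$ and a sequence of interior products along group directions, so $d_M$ commutes past the $\iota_j$ and the pullback and matches the target $d$ on the Getzler complex, with a sign $(-1)^{p-p'}$ arising from moving $d_M$ past the $l=p-p'$ contractions. The identity $\mathcal{J}\circ(-1)^p d_G=\bar\iota\circ\mathcal{J}$ is more delicate: applying $d$ in a group direction and evaluating at the identity picks out a tangent vector that, after the insertion of $\exp(tX_i)$ in the appropriate slot as in the definition of $\bar\iota$, reproduces the defining sum of $\bar\iota\,\mathcal{J}(\omega)$; the shuffle reorganization here is the same one that underlies the identity $\bar d\bar\iota+\bar\iota\bar d=-L$.

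The principal obstacle throughout is sign bookkeeping: the shuffle signs $\sgn(\pi)$, the alternating $(-1)^i$ in $\del$ and $\bar d$, the Koszul rule for form degrees, and the explicit signs $(-1)^p,(-1)^{p'}$ in the statement must all conspire. As noted in the preceding remark, the authors have already amended Getzler's signs to make the matching align cleanly, and once a consistent convention is fixed there is no conceptual surprise — only a patient reorganization of multi-index sums.
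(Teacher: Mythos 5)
Your overall strategy coincides with the paper's: unfold the definitions, classify the contributions of the face maps to $\mathcal J\circ\del$, obtain the $d_M$-identity by anticommuting $d_M$ past the $p-p'$ contractions, and handle the $d_G$-identity by a shuffle reorganization (the paper runs that last computation in the opposite direction, from $\bar\iota\circ\mathcal J$ to $(-1)^p\mathcal J\circ d_{G^p}$, using that only forms of degree $\le 1$ on each $G$-factor contribute, but the content is the same). Your treatment of $\del_0$, of $\del_{p+1}$ (zero-form part giving the last summand of $\bar d$, contracted part giving $\iota(X^\sharp)$), and of the second and third identities matches the paper's argument.

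There is, however, one concrete error: your case analysis for the middle faces $\del_i$, $1\le i\le p$, is stated exactly backwards, and as written that step fails. The ``mixed'' configurations, in which one of the slots $i,i+1$ lies in the inserted half (receiving some $g_m$) and the other in the contracted half, do \emph{not} survive and do \emph{not} produce the middle sum of $\bar d\,\mathcal J\omega$. For such a configuration and its partner obtained by swapping the roles of the slots $i$ and $i+1$, the contraction vector pushes forward under $T\del_i$ to $L_{g_m*}\Ad(g_{m+1}\cdots g_k)X$ in the one case and to $R_{g_m*}\Ad(g_mg_{m+1}\cdots g_k)X$ in the other; these two tangent vectors at $g_m$ coincide, while the two shuffles differ by the transposition $(i\;i{+}1)$ and hence carry opposite signs, so the pair cancels --- this is precisely the cancellation ``by symmetry'' in item (1) of the paper's proof. (When both slots are contracted, the two vectors push forward to the same vector in the same slot of $G^p$ and the term vanishes by antisymmetry of $\omega$.) What actually reproduces the middle sum $\sum_i(-1)^i f(\dots,g_{i-1}g_i,\dots|X)$ are the configurations with \emph{both} slots in the inserted half, where $\del_i\circ i_\pi$ simply substitutes the product of two adjacent group elements into a single slot and no contraction is disturbed. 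With this correction the classification closes up and the rest of your sketch goes through.
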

\begin{proof}
    The following four types of terms contribute to $\mathcal J \circ \del$:
\begin{enumerate}
    \item those terms where $\del$ acts by the group multiplication $G\times G\to G$ and $\mathcal J$ take a one form component on one of these groups: these parts cancel by symmetry;
\item	those parts where $\del$ acts by the group multiplication or the action on $M$ and $\mathcal J$ takes the zero form component on this part: these contribute to $\bar d \circ \mathcal J$;
\item those terms where $\del$ corresponds to the action of $G$ on $M$ and $\mathcal J$ takes the one form on this corresponding $G$ at $e$ yield $\iota\circ \mathcal J$. The sign comes from the fact that $\del_p$ has this sign in $\del$.
\end{enumerate}
 This proves the first equation. For the second decompose the exterior derivative $d=\sum_{k=1}^pd_{G^{(k)}}+d_M$ on $G^p\times M$ further into a part corresponding to each copy of $G$ and one corresponding to $M$. One checks immediately that $\mathcal J \circ \left(-1\right)^p d_M = (-1)^{p'}d \circ \mathcal J$, where the sign difference comes from interchanging $d_M$ with the contractions. For the last equation, which contains $\bar\iota$, note that one can restrict to $\omega\in \Omega^q(G^p\times M)$, whose degree on each copy of $G$ is either zero or one. Let $\pi\in S(l,p-l)$ be a shuffle and $\omega\in \Omega^q(G^p\times M)$ be a form, such that the differential form degree on the $\pi(k)$-th copy of $G$ is zero if $k\leq l$ and one if $k\geq l+1$. Then for any $X\in \mathfrak g$ we can calculate
\begin{align*}
  (\bar\iota& \circ \mathcal J \omega)(X)\\
&=\sum_{k=1}^l (-1)^k L_{X_k}^k  (\mathcal J \omega)(X)\\
\intertext{where $L^k$ should denote the Lie derivative on the $k$-th $G$,}
&=\sum_{k=1}^l (-1)^k \sgn(\pi)\sigma_{k-1}^*\iota_k(X_k)d_{G^{(k)}}  i_\pi^*\left(\iota_{\pi(l+1)}(X^{(\pi)}_{l+1})\dots\iota_{\pi(p)}(X^{(\pi)}_{p}) \omega\right)\\
&=\sum_{k=1}^l (-1)^k \sgn(\pi)\sigma_{k-1}^*\iota_k(X_k)i_\pi^*\left(d_{G^{(\pi(k))}}  \iota_{\pi(l+1)}(X^{(\pi)}_{l+1})\dots\iota_{\pi(p)}(X^{(\pi)}_{p}) \omega\right)\\
&=\sum_{k=1}^l (-1)^k \sgn(\pi)\sigma_{k-1}^*\iota_k(X_k)i_\pi^*  (-1)^{p-l}\left(\iota_{\pi(l+1)}(X^{(\pi)}_{l+1})\dots\iota_{\pi(p)}(X^{(\pi)}_{p}) d_{G^{(\pi(k))}}\omega\right)\\
&\quad + \text{terms which include a Lie derivative $L^k_{X_k}(X_{l+j})$ }\\
\intertext{The sign in the first term comes from the fact that the $d$ and $\iota$ anti-commute, since they act on different $G$'s. Moreover the other terms vanish, as each of them contains a factor of the form $\ddt e^{tX}Xe^{-tX}=[X,X]=0$.}
&=\sum_{k=1}^l (-1)^{p-l+k} \sgn(\pi)\sigma_{k-1}^*i_\pi^*\!\left(\iota_{\pi(k)}(X_k) \! \left(\iota_{\pi(l+1)}(X^{(\pi)}_{l+1})\dots\iota_{\pi(p)}(X^{(\pi)}_{p}) d_{G^{(\pi(k))}}\omega\right)\!\right)\\
\intertext{Now define $\pi_k\in S(l-1,p-l+1)$ as the shuffle, which is obtained from $\pi$ by transpose $k$ and $l$ and resorting the two groups.}
&=\sum_{k=1}^l (-1)^{p-l+k} \sgn(\pi)i_{\pi_k}^*\!\left(\iota_{\pi(k)}(X_k) \! \left(\iota_{\pi(l+1)}(X^{(\pi)}_{l+1})\dots\iota_{\pi(p)}(X^{(\pi)}_{p}) d_{G^{(\pi(k))}}\omega\right)\!\right)\\
\intertext{The sign of $\pi$ and $\pi_k$ differ by a $(-1)^{l-k}$ for transposing $\pi(k)$ into the second group and a sign change for every transposition which is necessary to reorder the second group. These reordering sign also occur a second time, when reordering the contractions. Thus they cancel out each other.}
&=\sum_{k=1}^l (-1)^{p} \sgn(\pi_k)i_{\pi_k}^* \left(\iota_{\pi_k(l)}(X^{(\pi_k)}_{l})\dots\iota_{\pi_k(p)}(X^{(\pi_k)}_{p}) d_{G^{(\pi(k))}}\omega\right)\\
&=(-1)^p\sum_{k=1}^l \mathcal J \left(d_{G^{(\pi(k))}}\omega\right)(X)\\
&=(-1)^p \mathcal J \left(\sum_{k=1}^ld_{G^{(\pi(k))}}\omega\right)(X)\\
&=(-1)^p \mathcal J \left(d_{G^p} \omega\right)(X).
\end{align*}
Note for the last step, that $\mathcal J$ vanishes on forms, whose degree on any copy of $G$ is larger than one.
\end{proof}

Further, the map $\mathcal J$ induces an isomorphism in the cohomology of the associated total complexes.
\begin{theorem}[Theorem 2.2.3. of \cite{Getzler}]
 $\mathcal J$ is a quasi-isomorphism.
\end{theorem}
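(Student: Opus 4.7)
The natural approach is a spectral sequence comparison. By the preceding lemma, $\mathcal J$ is a morphism of double complexes, so it preserves the filtration by horizontal degree $p$ (the simplicial index on the de Rham side and $k+l$ on Getzler's side). This induces a morphism of the associated column-filtration spectral sequences. Each total-degree piece $\bigoplus_{k+2l+m=n} C^k(G,S^l(\g\dual)\otimes\Omega^m(M))$ is first-quadrant in the $(p,q)$-indexing with finitely many summands for fixed $n$, so both spectral sequences converge to the cohomology of their respective total complexes. By the standard comparison theorem it therefore suffices to verify that $\mathcal J$ induces an isomorphism on $E_1$.

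On the de Rham side, $E_1^{p,q}=H^q_{\mathrm{dR}}(G^p\times M)$, which by the K\"unneth formula factors as $H^*(G)^{\otimes p}\otimes H^*(M)$.

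On Getzler's side, the vertical differential at fixed $p=k+l$ is $(-1)^k d+\bar\iota$. I would compute the resulting $E_1^{p,q}$ via a secondary, bounded (hence convergent) filtration by $k$: first, the $d$-cohomology produces $\bigoplus_{k+l=p} C^k(G,S^l(\g\dual)\otimes H^*(M))$, and then $\bar\iota$ assembles these pieces into a bar/Koszul-type complex. Standard computations — polynomial functions on $\g$ form a Koszul resolution of the constants in the $\bar\iota$-direction, and smooth cochains on $G^p$ form a bar-type resolution — identify the outcome with $H^*(G)^{\otimes p}\otimes H^*(M)$.

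The main obstacle is to show that the map on $E_1$ induced by $\mathcal J$ matches the K\"unneth isomorphism under these identifications. The observation immediately after \autoref{def:Getzlermap}, that $\mathcal J(\omega)$ depends only on the form-degree-zero and form-degree-one parts of $\omega$ along each $G$-factor (with the one-form part evaluated at the identity), is precisely the ``linearization at the identity'' that matches de Rham forms on $G^p$ to the Chevalley-Eilenberg/Koszul picture on the Getzler side. Once this matching is set up correctly, the quasi-isomorphism follows. I expect the most tedious part of the argument to be the careful bookkeeping of shuffle signs and Koszul signs to verify that the $E_1$-map is literally the K\"unneth isomorphism rather than merely a twist of it.
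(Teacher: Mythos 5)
The paper itself offers no proof of this statement---it is quoted directly from Getzler---so there is no internal argument to compare yours against; I can only assess the proposal on its own terms, and it has a genuine gap. The problem is the claimed computation of the $E_1$-page on Getzler's side. With the column filtration you describe, the $E_1$-term of the de Rham double complex is indeed $H^q_{\dR}(G^p\times M)$, but the $E_1$-term of Getzler's complex is the cohomology of the column
\[
\Bigl(\bigoplus_{k+l=p} C^k(G,S^l(\g\dual)\otimes \Omega^*(M)),\ (-1)^k d+\bar\iota\Bigr),
\]
and this is \emph{not} $H^*(G)^{\otimes p}\otimes H^*(M)$. The differential $\bar\iota$ only differentiates cochains at the identity of $G$, so it sees nothing but $1$-jets and cannot reproduce the global de Rham cohomology of $G$ column by column. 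Concretely, take $M=\mathrm{pt}$, $G=S^1$ and $p=1$: the column is the two-term complex $C^\infty(G)\xrightarrow{\ f\mapsto d_ef\ }\g\dual$, whose degree-$0$ cohomology $\ker(d_e)$ is infinite-dimensional and whose degree-$1$ cohomology vanishes, whereas the de Rham side gives $H^0(S^1)=\C$ and $H^1(S^1)=\C$. So $\mathcal J$ is not an isomorphism on $E_1$ for this filtration, and the comparison theorem cannot be invoked there. The ``Koszul resolution'' heuristic does not apply: at column $p$ one only has $S^l(\g\dual)$ for $l\le p$, and the complex $(C^k(G,S^l(\g\dual)),\bar\iota)$ is a jet-type (van Est) complex, not a Koszul complex.

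The underlying issue is that recovering $H^*(G^p)$ from Getzler's model is a van Est--type phenomenon that requires the interplay of $\bar d$ \emph{and} $\bar\iota$; any filtration whose associated graded separates $\bar d$ from $\bar\iota$ (as the column filtration does, placing $\bar\iota$ in the columns and $\bar d$ in the rows) destroys exactly the mechanism that makes the theorem true. A repair would have to either work at $E_2$ (where $\bar d$ re-enters), which is a substantially different and harder computation than what you sketch, or use a different filtration---for instance by the form degree on $M$---so that the comparison reduces to the $G$-directions with the full differentials $\bar d+\bar\iota$ intact, which is essentially the route Getzler takes. Your observation that $\mathcal J$ only depends on the $0$- and $1$-form parts along each $G$-factor is a correct and relevant ingredient, but it enters only after the filtration has been chosen so that both sides still compute the same thing levelwise.
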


\section{Equivariant characteristic classes}

In the introduction, we described, how characteristic classes induce equivariant characteristic classes in the Borel model. We will show, that any equivariant characteristic class is defined in this way. Recall, that any characteristic class for principal $K$-bundles $c$ yields to an equivariant characteristic class $c^G$ by the following procedure (compare, e.g., \cite[Section 5.4]{Libine},\cite{BottTu01}):
Let $E$ be a $G$-equivariant principal $K$-bundle and let $EG\to BG$ denote a universal $G$-bundle, i.e. $EG$ is a contractible topological space with free $G$-action and $BG$ is the quotient of this action. One defines 
\[c^G(E)\defeq c(EG\times_G E)\in H^*(EG\times_G X)=H^*_G(X).\]
Note that by 
\[\pi_G=id\times \pi:EG\times_G E\to EG\times_G M\eqdef M_G\]
one constructs a principal $K$-bundle from the $G$-equivariant principal $K$-bundle $E\to M$.
\begin{lemma}
    The association procedure $c\mapsto c^G$ is an one to one correspondence between characteristic classes and equivariant characteristic classes.
\end{lemma}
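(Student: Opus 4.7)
The plan is to establish injectivity and surjectivity of $c \mapsto c^G$ separately. That $c^G$ is a $G$-equivariant characteristic class follows immediately: a $G$-equivariant pullback diagram of principal $K$-bundles yields, after applying the Borel construction $(-)_G = EG \times_G (-)$, an ordinary pullback diagram of principal $K$-bundles, and naturality of $c$ then transfers to naturality of $c^G$.

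For injectivity, I would equip an arbitrary (non-equivariant) principal $K$-bundle $P \to N$ with the trivial $G$-action. The Borel construction then specialises to the product $P_G \cong BG \times P \to BG \times N \cong N_G$, which is isomorphic to the pullback $\pi^* P$ along the projection $\pi\from BG \times N \to N$. Naturality of $c$ therefore yields
\[ c^G(P,\mathrm{triv}) = c(\pi^* P) = \pi^* c(P). \]
Since any point of $BG$ yields a section of $\pi$, the map $\pi^*$ is injective on cohomology, so $c^G$ already determines $c$ on every non-equivariant bundle, and the assignment $c \mapsto c^G$ is injective.

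For surjectivity, I would invoke the bijection (up to equivariant homotopy, respectively homotopy) between $G$-equivariant principal $K$-bundles on a $G$-space $M$ and principal $K$-bundles on the Borel quotient $M_G$ induced by $E \mapsto E_G$. Given $\tilde c$, this lets one reverse the construction: any non-equivariant bundle $Q \to X$ can be realized as $E_G \to M_G$ for some equivariant $E \to M$ (e.g.~through a classifying map into $BK$, viewed as a Borel quotient in a suitable way), and one sets $c(Q) \defeq \tilde c(E)$ under the identification $H^*_G(M) = H^*(M_G)$. Naturality of $c$ and the tautological identity $c^G = \tilde c$ then follow from the construction.

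The main obstacle is the surjectivity step, namely making the bundle correspondence $E \leftrightarrow E_G$ precise enough to see that the proposed $c$ is well-defined (independent of the chosen equivariant representative) and natural under all pullback diagrams; injectivity, by contrast, is almost immediate once one notes that the trivial-action Borel construction identifies $P_G$ with a pullback of $P$.
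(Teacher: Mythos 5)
Your injectivity half is sound and is essentially the paper's argument for one of the two composites: restricting to trivial $G$-actions identifies $P_G$ with $\pr^*P$ over $BG\times N$, and a section of the projection recovers $c(P)$ from $c^G(P)$. The surjectivity half, however, rests on a claim that is false in general: the Borel construction $E\mapsto E_G$ is \emph{not} a bijection between isomorphism classes of $G$-equivariant principal $K$-bundles on $M$ and principal $K$-bundles on $M_G$. It is neither injective (non-isomorphic equivariant bundles can have isomorphic Borel constructions -- e.g.\ for $G=\mathbb{Z}$, $M=\mathrm{pt}$, $K=U(1)$, every character of $\mathbb{Z}$ yields the trivial bundle over $B\mathbb{Z}\simeq S^1$) nor surjective onto bundles over an arbitrary base $X$, since a general $Q\to X$ is not presented as a bundle over a Borel quotient. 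This failure is precisely the ``crudeness'' of Borel-equivariant classes alluded to in the remark following the lemma. Consequently your $c(Q)\defeq\tilde c(E)$ is not well-defined without further input, and the identity $c^G=\tilde c$ is not tautological: even if $Q=E_G=E'_G$, you would need $E\cong E'$ equivariantly to conclude $\tilde c(E)=\tilde c(E')$.

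The fix does not require any bundle correspondence. Define the candidate preimage of $\tilde c$ directly by $c(Q)\defeq f^*\tilde c(Q^{\mathrm{triv}})$, where $Q^{\mathrm{triv}}$ is $Q$ with trivial $G$-action and $f\from X\to BG\times X$ is a section of the projection -- this is exactly the map your injectivity step already uses implicitly. The genuine content of surjectivity is then the verification that $c^G=\tilde c$ on an \emph{arbitrary} equivariant bundle $F\to Y$, i.e.\ that $\tilde c(F)=f^*\tilde c(EG\times_G F)$ in $H^*_G(Y)$. The paper does this by interposing $EG\times Y$ with the diagonal action: both projections in the span $EG\times_G Y\leftarrow EG\times Y\to Y$ are covered by equivariant pullback squares of bundles, so $\pr_1^*\tilde c(EG\times_G F)=\pr_2^*\tilde c(F)$; then $\pr_2^*$ is an isomorphism on Borel cohomology because $EG$ is contractible, and $(\pr_2^*)^{-1}\circ\pr_1^*=f^*$ by a diagram commuting up to homotopy. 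Without some argument of this kind, the surjectivity step remains a gap.
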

\begin{proof}
    Any principal $K$-bundle $\pi\from E\to X$ can be understood as a $G$-equivariant principal $K$-bundle with trivial $G$-action. This holds for morphisms, too. Moreover, any section $f\from X\to \to BG\times X$ induces a map $f^*\from H^*_G(X)=H^*(EG\times_G X)=H^*(BG\times X)\to H^*(X)$. Thus an equivariant characteristic class naturally yields a characteristic class. We are now going to prove that this is an inverse to $c\mapsto c^G$.

Let $c$ be a characteristic class and $E$ be a principal $K$-bundle. Then $c^G(E)=c(EG\times_G E)=c(BG\times E)$. Let $\pr\from BG\times X\to X$ denote the projection, then $BG\times E=\pr^*E$ and $f^*c(BG\times E)=f^*c(\pr^* E)=f^*(\pr^*c( E))=(\pr\circ f)^*c( E)=c(E)$, since $\pr\circ f=\id_X$.

On the other hand, let $c$ be $G$-equivariant characteristic class. We have to show that for any $G$-equivariant principal $K$-bundle $F\to Y$
\[c(F)=f^*c(EG\times_G F)\in H^*_G(Y)=H^*(EG\times_GY),\]
where $f\from EG\times_GY\to BG\times(EG\times_GY)$ is an inclusion as above. Both squares in the commutative diagram
\[\begin{tikzcd}[column sep=large]
    EG\times_GF\arrow{d}&EG\times F\arrow{d}\arrow{l}\arrow{r}&F\arrow{d}\\
EG\times_GY&EG\times Y\arrow{l}[above]{\pr_1}\arrow{r}{\pr_2}&Y
\end{tikzcd}\]
are pullbacks of $G$-equivariant bundles, hence $\pr_2^*c(F)=\pr_1^*c(EG\times_G F)$ in $H^*_G(EG\times Y)$. Since $EG$ is contractible and Borel cohomology is invariant under (non-equivariant) contractions, $\pr_2^*$ is an isomorphism. Thus we only have to show that $(\pr_2^*)\invers \circ\pr_1^*=f^*$. This follows, because the diagram 
\[\begin{tikzcd}[column sep=tiny]
      EG\times_G Y\arrow{rr}{f}&&BG\times(EG\times_G Y)\\
	&(EG\times EG\times Y)/G\arrow{ul}{\pr_2}\arrow{ur}[below]{\pr_1}&
  \end{tikzcd}\]
commutes up to homotopy, since $EG$ is contractible.
\end{proof}
\begin{remark}
    This lemma is a reformulation of the basic statement of \cite{May85} and the reason why  Peter May proposed, that equivariant characteristic classes in Borel cohomology are too `crude'.
\end{remark}

\section{Classifying bundle and classifying map}

\subsection{Simplicial bundles}
Let $K$ be a Lie group. A simplicial manifold, whose realization is the universal principal $K$-bundle $EK\to BK$ is given in the following way: $NK\defeq K^\bullet\times pt$ is the simplicial manifold of the action of $K$ on a point (see Example \ref{ex:actionmanifold}) and $\left(N\overline K\right)_\bullet$ is  \symindex[N]{NK}\symindex[N]{N\overline K} given as $N\overline K_p=K^{p+1}$ with face maps 
\[\del_i(k_0,\dots,k_p)=(k_0,\dots,\hat k_i,\dots,k_p),\]
where the hat indicates that this element is left out, and degeneracy maps 
\[\sigma_i(k_0,\dots,k_p)=(k_0,\dots,k_i,k_i,\dots,k_p).\]
The $K$-action on $N\overline K$, given by 
\[(k_0,\dots,k_p)k=(k_0k,\dots,k_pk),\]
is compatible with the face and degeneracy maps and hence a simplicial action. Moreover, there is a simplicial map
\[\gamma:N\overline K\to NK, (k_0,\dots,k_p)\mapsto (k_0k_1^{-1},\dots,k_{p-1}k_p^{-1}).\]

\begin{lemma}[Prop. 5.3 of {\cite{Dupont}}]
 $\|\gamma\|\colon \left\|N\overline K\right\|\to \|NK\|$ is a principal $K$-bundle.
\end{lemma}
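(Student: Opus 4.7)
The plan is to exhibit $\|\gamma\|$ as the quotient map for a free $K$-action on $\|N\overline K\|$ and then promote the levelwise trivializations of $\gamma_p$ to local trivializations of $\|\gamma\|$. First I would verify the basic compatibilities: the face and degeneracy maps of $N\overline K$ commute with right multiplication by $K$ (direct inspection of the formulas), $\gamma$ is a simplicial map, and $\gamma$ is $K$-invariant because $(k_{i-1}k)(k_ik)^{-1}=k_{i-1}k_i^{-1}$. These ensure that the levelwise action descends to a continuous $K$-action on $\|N\overline K\|$ and that $\|\gamma\|$ factors through this quotient.

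Next, I would show that each $\gamma_p\from K^{p+1}\to K^p$ is a trivial principal $K$-bundle. The diffeomorphism
\[\Phi_p\from K^p\times K\longrightarrow K^{p+1},\quad\bigl((g_1,\dots,g_p),k\bigr)\longmapsto\bigl(g_1g_2\cdots g_pk,\,g_2\cdots g_pk,\,\dots,\,g_pk,\,k\bigr)\]
intertwines $\gamma_p$ with the projection onto the first factor $K^p$ and the right $K$-action on $K^{p+1}$ with right multiplication on the second factor. In particular $\gamma_p$ admits the global section $s_p(g_1,\dots,g_p)=(g_1\cdots g_p,\,g_2\cdots g_p,\,\dots,\,g_p,\,e)$, and the $K$-action at level $p$ is free.

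To upgrade these levelwise statements to the realization, I would argue first that freeness of the $K$-action on $\|N\overline K\|$ follows from levelwise freeness combined with the $K$-equivariance of the gluings $(\del^it,x)\sim(t,\del_ix)$. For local triviality, the key point is that every point of $\|NK\|$ admits a representative $[t,g]$ with $t$ in the open interior of some $\Delta^p$, and for such a representative the quotient map $\Delta^p\times NK_p\to\|NK\|$ restricts to an open embedding on a small neighborhood of $(t,g)$; composing with $s_p$ produces a continuous local section $(t,h)\mapsto[(t,s_p(h))]$ of $\|\gamma\|$, and combining it with the free $K$-action yields the required local product structure.

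The step I expect to be the main obstacle is the topological bookkeeping for the fat realization: one has to verify that the identifications do not glue together points lying in distinct $K$-orbits and that the chosen local sections are genuinely continuous maps into $\|N\overline K\|$, not merely set-theoretic lifts. The cleanest route seems to be to prove once and for all that the quotient by a levelwise free $K$-action commutes with the fat realization; this reduces the lemma to the trivial levelwise statement established via $\Phi_p$, with the sections $s_p$ supplying the local product structure after passing to the realization.
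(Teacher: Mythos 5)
The levelwise part of your argument is correct and is where any proof must start: $\gamma$ is a $K$-invariant simplicial map, each $\gamma_p\from K^{p+1}\to K^p$ is trivialized by your $\Phi_p$, the diagonal right action is free and commutes with all face and degeneracy maps, and the quotient by $K$ commutes with the colimit defining the fat realization, so $\|N\overline K\|/K\cong\|NK\|$ with $\|\gamma\|$ the orbit map. (The paper itself offers no proof here -- it cites Dupont, Prop.~5.3 -- so the comparison is against that argument, which also begins exactly this way.)

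The gap is in local triviality, which is the entire content of the lemma, and the specific topological claim you rest it on is false. For $t\in\mathring\Delta^p$ the quotient map $\Delta^p\times NK_p\to\|NK\|$ is injective near $(t,g)$ but not open: the saturation of $\mathring\Delta^p\times NK_p$ meets $\Delta^{p+1}\times NK_{p+1}$ in $\bigcup_i\del^i(\mathring\Delta^p)\times NK_{p+1}$, a union of boundary faces, so the image of $\mathring\Delta^p\times NK_p$ has empty interior in $\|NK\|$ (already for $p=0$: the image of $\Delta^0\times NK_0$ pulls back to the two endpoints of $\Delta^1$ times $NK_1$). Hence $(t,h)\mapsto[(t,s_p(h))]$ is defined on a set with empty interior and is not a local section over any open set; every neighborhood of $[t,g]$ contains points whose canonical representatives live at arbitrarily high simplicial levels. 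Worse, the candidate sections do not even glue across strata: one checks $\del_i\circ s_p=s_{p-1}\circ\del_i$ for $i<p$ but $\del_p\circ s_p(g_1,\dots,g_p)=\bigl(s_{p-1}\circ\del_p(g_1,\dots,g_p)\bigr)\cdot g_p$, so across the last face the lift jumps by the $K$-action -- this is precisely where the nontriviality of $EK\to BK$ and the transition data enter. Finally, your proposed ``cleanest route'' (quotient commutes with fat realization) only exhibits $\|\gamma\|$ as the orbit map of a free action, and freeness does not imply local triviality (the irrational flow on the torus is a free $\Real$-action whose orbit map is not a bundle), so the lemma does not reduce to the levelwise statement. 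What is genuinely needed, and what Dupont supplies, is an open cover of $\|NK\|$ adapted to the skeletal filtration together with trivializations built inductively over the skeleta (a Milnor-join-type argument using the barycentric coordinates); that step is missing from your outline.
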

The following lemma is is a special case of a standard argument in the theory of simplicial sets (see \cite[Prop. 1.6.7]{Loday}), which applies in the case that there is an additional degeneracy.
\begin{lemma}
 $\left\|N\overline K\right\|$ is contractible.
\end{lemma}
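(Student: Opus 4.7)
The plan is to exhibit an \emph{extra degeneracy} on $N\overline K$ and turn it into an explicit contracting homotopy of the geometric realization. Specifically, I would define maps
\[s_{-1}\from N\overline K_p=K^{p+1}\to K^{p+2}=N\overline K_{p+1},\qquad (k_0,\dots,k_p)\mapsto (e,k_0,\dots,k_p).\]
A direct computation then verifies the extended simplicial identities
\[\del_0\circ s_{-1}=\id,\quad \del_i\circ s_{-1}=s_{-1}\circ\del_{i-1}\text{ for }i\geq 1,\quad \sigma_i\circ s_{-1}=s_{-1}\circ\sigma_{i-1}\text{ for }i\geq 0,\]
since each of $\del_i$ and $\sigma_i$ acts on coordinates with index $\geq 1$ exactly as the corresponding operator on the shifted tuple. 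This is the setting of \cite[Prop.~1.6.7]{Loday}.

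Next I would build the contracting homotopy on the fat realization directly from $s_{-1}$. For $s\in[0,1]$, $t=(t_0,\dots,t_p)\in\Delta^p$, and $(k_0,\dots,k_p)\in N\overline K_p$, set
\[H\bigl([t;k_0,\dots,k_p],s\bigr)\defeq \bigl[\,(s,(1-s)t_0,\dots,(1-s)t_p);\,e,k_0,\dots,k_p\,\bigr]\in\|N\overline K\|,\]
interpreted via the natural inclusion $\Delta^p\hookrightarrow\Delta^{p+1}$, $t\mapsto (0,t)$, and $s_{-1}$ on the fibre. At $s=0$ this gives $[(0,t);e,k_0,\dots,k_p]\sim [t;\del_0(e,k_0,\dots,k_p)]=[t;k_0,\dots,k_p]$, i.e.\ the identity, and at $s=1$ it collapses everything to the $0$-simplex $[1;e]$, which is a basepoint.

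The main thing to check — and the only nontrivial point — is that $H$ descends to the quotient $\|N\overline K\|$. Concretely, if $(\del^i t,x)\sim(t,\del_i x)$ for $i\geq 1$, one rewrites
\[H((\del^i t, x),s)=[(s,(1-s)\del^i t);e,x]=[\del^{i+1}(s,(1-s)t);e,x]\sim[(s,(1-s)t);\del_{i+1}(e,x)],\]
and the relation $\del_{i+1}(e,x)=(e,\del_i x)=s_{-1}\del_i(x)$ identifies this with $H((t,\del_i x),s)$. The case $i=0$ works analogously, using $\del_1(e,x)=x$ so the corresponding face of the larger simplex gets glued compatibly. Continuity in $s$ is manifest from the formula. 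Together with the formulas above at $s=0,1$, this shows $\|N\overline K\|$ is contractible, with the constant map at $\|[e]\|\in \|N\overline K\|$ as basepoint. The only point requiring any care is the compatibility with face identifications just described; everything else is formal.
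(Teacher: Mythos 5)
Your proof is correct and is essentially the same as the one in the paper: the paper uses the identical homotopy $h_s(t,k)=(s,(1-s)t,e,k,)$, checks the same endpoint identifications, and likewise frames the argument via the extra degeneracy of \cite[Prop.~1.6.7]{Loday}; you merely spell out the compatibility with the face identifications in more detail. (One cosmetic slip: in your $i=0$ case the relevant identity is $\del_1(e,x)=(e,\del_0 x)=s_{-1}\del_0 x$, not $\del_1(e,x)=x$ --- but this is already covered by your general formula $\del_i\circ s_{-1}=s_{-1}\circ\del_{i-1}$.)
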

\begin{proof}
By definition $\left\|N\overline K\right\|=\bigcup \Delta^n\times K^{n+1}/\sim$. Define $h:[0,1]\times\left\|N\overline K\right\|\to \left\|N\overline K\right\|$ by
 \[h_s(t_0,\dots,t_n,k_0,\dots,k_n)=(s,(1-s)t_0,\dots,(1-s)t_n,e,k_0,\dots,k_n).\]
$h_0((t_i),(k_i))=(0,(t_i),e,(k_i))=(\del^0((t_i)),e,(k_i))\sim ((t_i),\del_0(e,(k_i)))= ((t_i),(k_i))$ and 
$h_1((t_i),(k_i))=(1,(0),e,(k_i))=((\del^1)^n(1),e,(k_i))\sim (1,(\del_1)^n(e,(k_i)))= (1,e)\in \Delta^0\times K$. Hence the homotopy $h$ is a contraction of $\left\|N\overline K\right\|$ to a point.
\end{proof}
Thus we have a model of $EK=\left\|N\overline K\right\| \to BK=\faktor{EK}{K}=\|NK\|$. The map $\gamma\from N\overline K\to NK$ is a special case of the following object.

\begin{defn}[compare {\cite[p. 93]{Dupont}}]
 A simplicial $K$-bundle $\pi\colon E\to M$ is a sequence $\pi_p\colon E_p\to M_p$ of differential $K$-bundles, where $E=\{E_p\}$ and $M=\{M_p\}$ are simplicial manifolds, $\pi$ is a simplicial map and the right action of $K$ on $E$, $R_k:E\to E$, is simplicial, i.e., commutes with all face and degeneracy maps.
\end{defn}

A $G$-equivariant principal $K$-bundle $\pi\colon E\to M$ leads to the simplicial $K$-bundle \[\pi_\bullet=id_G\times \pi\from G^\bullet\times E\to G^\bullet\times M,\] where the action of $K$ is given by trivial extension along $G$. We want to construct a classifying map of the bundle $E$, which will be the geometric realization of a map of simplicial manifolds. Therefore we are going to define an intermediate bundle, mapping to the classifying space and to $G^\bullet\times E\to G^\bullet\times M$. This intermediate bundle is motivated by bisimplicial manifolds.

\subsection{Bisimplicial manifolds}
Our construction of the classifying map for simplicial bundles is motivated from the construction of the classifying map for non-simplicial bundles of \cite{Dupont}, which uses simplicial spaces. In the end it will turn out that we don't have to care about bisimplicial manifold, as they are reducible to their diagonal, which as simplicial manifold.
\begin{defn}[see {\cite[p. 196]{simphomtheory}}]
 A \emphind{bisimplicial set} is a simplicial object in the category of simplicial sets or equivalently a functor $\Delta^{op}\times\Delta^{op}\to Sets$.
\end{defn}
\begin{remark}A bisimplicial set is a collection of Sets $\{X_{p,q}|p,q=0,1,\dots\}$ together with vertical and horizontal face and degeneracy maps, such that vertical and horizontal maps commute.
 \end{remark}
\begin{defn}
 A \emph{bisimplicial manifold} is a collection of manifolds $\{X_{p,q}|p,q=0,1,\dots\}$, which forms a bisimplicial set and all face and degeneracy maps are smooth.
\end{defn}
We will construct a bisimplicial manifold from a simplicial manifold with a suitable cover.
\begin{defn}[see {\cite{Brylinski00,Gomi}}]\label{def:simpcover}
 A \emph{simplicial cover} \index{cover!simplicial} for the simplicial manifold $M_\bullet$ is a family $\mathcal U^\bullet=\{\mathcal U^{(p)}\}$\symindex[u]{\mathcal U^\bullet,\mathcal U^{(p)}} of open covers such that
\begin{enumerate}
 \item $\mathcal U^{(p)}=\{U^{(p)}_\alpha|\alpha\in A^{(p)}\}$ is an open cover of $M_p$, for each $p$, and
 \item the family of index sets forms a simplicial set $A^\bullet=\{A^{(p)}\}$, satisfying
 \item $\del_i(U^{(p)}_{\alpha})\subset U^{(p-1)}_{\del_i\alpha}$ and $\sigma_i(U^{(p)}_{\alpha})\subset U^{(p+1)}_{\sigma_i\alpha}$ for every $\alpha\in A^{(p)}.$
\end{enumerate}
\end{defn}
\begin{example}\label{cechcomplex}
 Let $M_\bullet$ be a simplicial manifold and $\mathcal U^\bullet$ a simplicial cover. 
Thus for any $p$ we can construct the Čech simplicial manifold \cite[Example 5, p.78]{Dupont}\symindex[N]{N_2M_{\mathcal U}}: 
\[(N_2M_{\mathcal U})_{p,q}\defeq\coprod_{(\alpha_0,\dots,\alpha_q)} U^{(p)}_{\alpha_0}\cap\dots\cap U^{(p)}_{\alpha_q}, \]
where the disjoint union is taken over all $(q+1)$-tuples $(\alpha_0,\dots,\alpha_q)\in (A^{(p)})^{q+1}$ with $U^{(p)}_{\alpha_0}\cap\dots\cap U^{(p)}_{\alpha_q}\neq \emptyset$. The face and degeneracy maps are given on the index sets by removing, respective doubling of the $i$-th index and on the open sets by the corresponding inclusions.

\begin{lemma}$N_2M_{\mathcal U}$ is a bisimplicial manifold.\end{lemma}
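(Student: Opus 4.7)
The plan is to verify the three defining properties of a bisimplicial manifold for the given construction: (i) both the horizontal and vertical face and degeneracy maps are smooth, (ii) each of the two directions individually satisfies the simplicial identities, and (iii) the horizontal and vertical structure maps commute.

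First I would write down the two families of maps explicitly. The vertical (Čech) face map $\del^v_j\from (N_2M_{\mathcal U})_{p,q}\to (N_2M_{\mathcal U})_{p,q-1}$ sends the component indexed by $(\alpha_0,\dots,\alpha_q)$ to the component indexed by $(\alpha_0,\dots,\widehat{\alpha_j},\dots,\alpha_q)$ via the inclusion $U^{(p)}_{\alpha_0}\cap\dots\cap U^{(p)}_{\alpha_q}\hookrightarrow U^{(p)}_{\alpha_0}\cap\dots\cap\widehat{U^{(p)}_{\alpha_j}}\cap\dots\cap U^{(p)}_{\alpha_q}$, and the vertical degeneracy $\sigma^v_j$ repeats the $j$-th index, acting as the identity on the open set. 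The horizontal face map $\del^h_i\from (N_2M_{\mathcal U})_{p,q}\to (N_2M_{\mathcal U})_{p-1,q}$ is defined on the component $U^{(p)}_{\alpha_0}\cap\dots\cap U^{(p)}_{\alpha_q}$ as the restriction of $\del_i\from M_p\to M_{p-1}$, landing in the component indexed by $(\del_i\alpha_0,\dots,\del_i\alpha_q)\in (A^{(p-1)})^{q+1}$; the horizontal degeneracies are defined analogously using $\sigma_i$.

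Next I would verify that these maps are well-defined. For the horizontal face map, condition (3) of Definition \ref{def:simpcover} gives $\del_i(U^{(p)}_{\alpha_j})\subset U^{(p-1)}_{\del_i\alpha_j}$ for each $j$, hence $\del_i\bigl(\bigcap_j U^{(p)}_{\alpha_j}\bigr)\subset \bigcap_j U^{(p-1)}_{\del_i\alpha_j}$; the analogous statement holds for $\sigma_i$. Smoothness is then automatic since the maps are restrictions of the smooth face and degeneracy maps of $M_\bullet$, respectively canonical inclusions of open sets. The vertical simplicial identities are the standard ones for the Čech nerve of the cover $\mathcal U^{(p)}$, which hold for each fixed $p$. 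The horizontal simplicial identities follow directly from the simplicial identities in $M_\bullet$ together with those in the simplicial index set $A^\bullet$ guaranteed by condition (2).

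Finally I would check the commutativity of horizontal and vertical maps, which is the only place where a mild subtlety arises. On index sets, the vertical maps only act on the tuple $(\alpha_0,\dots,\alpha_q)$ (omitting or repeating an entry) while the horizontal maps act entry-wise by $\del_i$ or $\sigma_i\from A^{(p)}\to A^{(p\mp 1)}$; these two operations commute tautologically. On the open-set level, the vertical maps are inclusions (or identities) between intersections, and the horizontal maps are restrictions of $\del_i$ or $\sigma_i\from M_p\to M_{p\mp 1}$: a set-theoretic restriction commutes with an inclusion of its domain into a larger open set, so the two squares of manifolds commute. I do not expect a genuine obstacle here; the only thing to be careful about is bookkeeping with the index simplicial set $A^\bullet$, in particular that the target components match on the nose, which is precisely what conditions (2) and (3) of Definition \ref{def:simpcover} were designed to ensure.
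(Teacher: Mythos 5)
Your proof is correct and follows essentially the same route as the paper's (much terser) argument: use condition (3) of the simplicial cover to see that the face and degeneracy maps of $M_\bullet$ restrict to the intersections of covering sets, and observe that these commute with the \Cech{} maps because restricting to a smaller neighborhood commutes with applying the structure maps. The only difference is notational — you call the \Cech{} direction vertical where the paper calls it horizontal — and your added bookkeeping with the index simplicial set $A^\bullet$ is a welcome elaboration rather than a new idea.
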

\begin{proof}Clearly, we have a bi-graded collection of manifolds. The third property of the simplicial cover ensure that the face and degeneracy maps of $M_\bullet$ restrict to the disjoint unions of intersections of covering sets and thus induce vertical face and degeneracy maps for the bisimplicial manifold. That these vertical maps compute with the horizontal `\Cech' maps follows, because it is the same, if one first restricts the neighborhood of a point, and then map the point, or doing it the other way around.\end{proof}
\end{example}

There are evidently two ways to geometrically realize a bisimplicial space: 1) first realize vertically and afterwards realize the received simplicial space in horizontal direction or 2) do it the other way around. Moreover, there is also a third one: realize the diagonal!
\begin{defn}[see {\cite[197]{simphomtheory}}]
Let $(M_{\bullet,\bullet},\del_i,\sigma_i,\del_i',\sigma_i')$ be a bisimplicial manifold. The \emphind{diagonal} is the simplicial manifold
\[(p\mapsto M_{p,p},\del_i\circ\del_i',\sigma_i\circ \sigma_i').\]
\end{defn}
\begin{lemma}[see {\cite[p.\,10/86/94]{Quillen}}]
 There are canonical homeomorphisms
 \[\|p\mapsto X_{p,p}\|=\|p\mapsto\|q\mapsto X_{p,q}\|\|=\|q\mapsto\|p\mapsto X_{p,q}\|\|.\]
\end{lemma}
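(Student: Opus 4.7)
The plan is to exhibit all three realizations as quotients of a common bi-parameterized space and identify the relations. Concretely, introduce
\[R(X_{\bullet,\bullet}) \defeq \Bigl(\coprod_{p,q\geq 0} \Delta^p \times \Delta^q \times X_{p,q}\Bigr)\Big/{\sim},\]
where $\sim$ is generated by the two face relations $(\del^i s, t, x) \sim (s, t, \del_i^{h} x)$ for $s\in\Delta^{p-1}$, and $(s, \del^j t, x) \sim (s, t, \del_j^{v} x)$ for $t\in\Delta^{q-1}$. I will argue that each of the three realizations is canonically homeomorphic to $R(X_{\bullet,\bullet})$.

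For the two iterated realizations, I would first compute $\|q\mapsto X_{p,q}\|$ for each fixed $p$ as $\coprod_q \Delta^q\times X_{p,q}/{\sim_v}$ using only the vertical face relations. The horizontal face and degeneracy maps of $X_{\bullet,\bullet}$ are, by assumption, simplicial in $q$, so they descend to simplicial maps of these realizations and turn $p\mapsto\|q\mapsto X_{p,q}\|$ into a simplicial topological space. Realizing once more, the quotient by the remaining horizontal face relations is precisely $R(X_{\bullet,\bullet})$. The same argument applied in the opposite order yields the second iterated realization, so both equal $R(X_{\bullet,\bullet})$ canonically.

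For the diagonal, I would construct a homeomorphism $\|p\mapsto X_{p,p}\|\to R(X_{\bullet,\bullet})$ by way of the diagonal embedding $\Delta^n\hookrightarrow\Delta^n\times\Delta^n$, sending the class of $(s,x)\in\Delta^n\times X_{n,n}$ to the class of $(s,s,x)$. The inverse is furnished by the shuffle decomposition of $\Delta^p\times\Delta^q$: for each $(p,q)$-shuffle $\pi\in S(p,q)$ there is a linear embedding $\varphi_\pi\from\Delta^{p+q}\to\Delta^p\times\Delta^q$, and together these embeddings cover $\Delta^p\times\Delta^q$. Each shuffle $\pi$ corresponds to a composition of horizontal and vertical coface maps $[p]\leftarrow[p+q]\rightarrow[q]$, and using the face relations in $R(X_{\bullet,\bullet})$ a point lying in the image of $\varphi_\pi\times\id_{X_{p,q}}$ can be rewritten in the form $(u,u,y)$ with $u\in\Delta^{p+q}$ and $y\in X_{p+q,p+q}$. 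Checking that different shuffles yield the same class (using the face relations on the boundary $(p+q-1)$-faces where shuffles meet) shows the map is a well-defined homeomorphism.

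The main obstacle is the last point: verifying carefully that the shuffle decomposition is compatible with the face identifications, i.e., that the combinatorics of adjacent shuffles matches the vertical and horizontal face relations on $X_{\bullet,\bullet}$ along the common boundary simplices. This is the standard Eilenberg--Zilber bookkeeping, but it is the only genuinely technical part of the argument; the rest is formal manipulation of quotient topologies. Since the statement is cited verbatim from Quillen, I would content myself with this sketch and refer the reader to \cite{Quillen} for the combinatorial details.
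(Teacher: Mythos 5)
The paper offers no proof of this lemma (it is only a citation to Quillen), so the only question is whether your argument is sound. The first half is: identifying both iterated fat realizations with the double quotient $R(X_{\bullet,\bullet})$ is correct, since the product with a compact simplex preserves quotient maps and the two families of face relations may be imposed in either order.

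The gap is in the diagonal step, and it is not merely Eilenberg--Zilber bookkeeping. The shuffle decomposition identifies the realization of the simplicial set $\Delta[p]\times\Delta[q]$ with the prism $\Delta^p\times\Delta^q$ only after degenerate simplices have been collapsed: the $(p+q)$-simplices $\varphi_\pi$ indexed by shuffles do cover the prism, but the identifications along the walls where two shuffles meet are \emph{degeneracy} identifications, which the fat realization used in this paper (and in your $R$, which you define using only the two face relations) does not impose. As a result your map $[s,x]\mapsto[s,s,x]$ is well defined but not surjective: a point $(s,t,x)$ with $s$ interior to $\Delta^p$, $t$ interior to $\Delta^q$, $p\neq q$, and $x$ not in the image of any face map lies in no diagonal class. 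Concretely, for the constant bisimplicial point $X_{p,q}=\mathrm{pt}$ one has $\|p\mapsto X_{p,p}\|=\|\mathrm{pt}\|$, a CW complex with exactly one cell in each dimension, while $R=\|\mathrm{pt}\|\times\|\mathrm{pt}\|$ has $n+1$ cells in dimension $n$; these spaces are not homeomorphic. So for fat realizations the diagonal comparison is only a natural homotopy equivalence (see, e.g., the appendix of Segal's ``Categories and cohomology theories''), whereas Quillen's cited statement concerns the realization with degeneracy identifications, where your shuffle argument does go through verbatim. Since the paper only needs an isomorphism in cohomology, the remedy is either to weaken the conclusion to a natural homotopy equivalence or to work with the thin realization; as written, the claimed homeomorphism with the diagonal cannot be established by your construction because it is false in the fat setting.
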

This lemma motivates to reduce the bisimplicial manifold $N_2M_{\mathcal U}$ to its diagonal.
\begin{defn}\label{def:NMU}
Let $M_\bullet$ be a simplicial manifold with simplicial cover $\mathcal U^\bullet$. The simplicial manifold $NM_\mathcal{U}$ \symindex[N]{NM_\mathcal U} is defined to be the diagonal of $N_2M_{\mathcal U}$.\end{defn}

\begin{lemma}
    The inclusions $\mathcal U^{(p)}\ni U^{(p)}_\alpha\subset M_p$ induce a map 
\begin{align}\label{eq:cechmap}
NM_{\mathcal{U}} &\to M_\bullet,    
\end{align}
 which induces an functorial isomorphism in (complex) cohomology \[H^*(\|M\|)\cong H^*(\|NM_\mathcal{U}\|).\]
\end{lemma}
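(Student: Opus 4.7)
The plan is to use the diagonal lemma quoted immediately above, combined with the classical nerve theorem for open covers. Since $NM_{\mathcal{U}}$ is by construction the diagonal of the bisimplicial manifold $N_2M_{\mathcal{U}}$, the diagonal lemma provides a canonical homeomorphism
\[
\|NM_{\mathcal{U}}\| \cong \|p \mapsto \|q \mapsto (N_2M_{\mathcal{U}})_{p,q}\|\|.
\]
For each fixed $p$, the simplicial manifold $q \mapsto (N_2M_{\mathcal{U}})_{p,q}$ is, by definition, the \Cech{} nerve of the open cover $\mathcal{U}^{(p)}$ of $M_p$. The classical nerve theorem (compare Example~5 on p.78 of \cite{Dupont}, which in turn is essentially Segal's result) asserts that the natural map $\|q \mapsto (N_2M_{\mathcal{U}})_{p,q}\| \to M_p$ induced by the inclusions of the cover is a weak homotopy equivalence.

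These level-wise equivalences assemble into a map of simplicial topological spaces $p \mapsto \|q \mapsto (N_2M_{\mathcal{U}})_{p,q}\| \to M_\bullet$, whose geometric realization is precisely the map induced by \eqref{eq:cechmap}. Since fat realization takes level-wise weak equivalences to weak equivalences -- a condition that is easy to verify in our setting because the face and degeneracy maps are smooth, hence in particular sufficiently cofibrant -- one obtains the desired isomorphism $H^*(\|M_\bullet\|;\C) \cong H^*(\|NM_{\mathcal{U}}\|;\C)$. Functoriality in $(M_\bullet,\mathcal{U}^\bullet)$ is automatic, since every construction in this chain (\Cech{} nerve, diagonal, geometric realization, cohomology) is natural.

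The principal technical obstacle lies in the last step above: justifying the passage from level-wise to global weak equivalence for fat realizations. A conceptually cleaner alternative, which sidesteps any cofibrancy hypothesis, is to work entirely on the level of differential forms. By \autoref{thm:realiso} both cohomologies are computed by the corresponding simplicial de Rham double complexes. Consider instead the triple complex $\Omega^{p,q,r}\defeq \Omega^r((N_2M_{\mathcal{U}})_{p,q})$, equipped with the simplicial, \Cech{} and exterior differentials. By the diagonal lemma together with the Eilenberg-Zilber theorem, the total complex of $\Omega^{p,q,r}$ computes the cohomology of $\|NM_{\mathcal{U}}\|$. Filtering by the $p$-index and taking cohomology in the $q$- and $r$-directions first gives, by the usual \Cech{}-de Rham theorem applied to each cover $\mathcal{U}^{(p)}$ of $M_p$, the de Rham cohomology of $M_p$; the resulting $E_1$-page coincides with the simplicial de Rham double complex of $M_\bullet$ from \autoref{thm:realiso}. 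A standard spectral sequence comparison argument then identifies the cohomology of the total triple complex with the cohomology of the simplicial de Rham double complex of $M_\bullet$, and by construction this identification is induced by pullback along \eqref{eq:cechmap}.
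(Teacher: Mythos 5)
Your fallback argument via differential forms is, in substance, the paper's own proof: the paper introduces exactly the triple complex $\check C^{p,q,r}(\mathcal U^\bullet)=\Omega^r\bigl(U^{(p)}_{\alpha_0}\cap\dots\cap U^{(p)}_{\alpha_q}\bigr)$, identifies $\Omega^{\bullet,r}(NM_{\mathcal U})$ with the diagonal of the bisimplicial complex $\check C^{\bullet,\bullet,r}$, invokes the generalized Eilenberg--Zilber theorem to pass from the diagonal to the total complex, and then collapses the \Cech{} direction using that $\Omega^r$ is a fine sheaf (a partition of unity gives an explicit contraction) -- which is the same mechanism as your spectral-sequence comparison. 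One small imprecision in your version: if you take cohomology in \emph{both} the $q$- and $r$-directions at once, the $E_1$-page is $H^*_{\dR}(M_p)$ with the simplicial differential, not the simplicial de Rham double complex itself; you should either collapse only the \Cech{} direction (recovering $\Omega^r(M_p)$ on the nose, as the paper does) or compare the two spectral sequences filtered by $p$ via the pullback map and note it is an isomorphism on $E_1$. Your first, homotopy-theoretic route (levelwise nerve theorem plus the diagonal lemma) is a genuinely different and perfectly viable argument that the paper does not take; it even yields a weak homotopy equivalence rather than merely a cohomology isomorphism. However, your justification of its key step is off: the fact that the fat realization sends levelwise weak equivalences to weak equivalences holds unconditionally -- that is precisely why the fat realization is used -- and has nothing to do with smoothness of the face and degeneracy maps, which is not a cofibrancy condition. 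The real external input there is the nerve theorem for the fat realization of the \Cech{} nerve of an open cover (Segal), which you would need to cite; the paper avoids this by staying entirely within de Rham theory.
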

\begin{proof}
By \autoref{thm:realiso}, both sides are functorially isomorphic to the cohomology of the corresponding de Rham double complexes. We will prove the isomorphism in this model. Notice that differential forms do not only form a vector space, but a sheaf on each manifold, hence one can define a Čech resolution with respect to the cover $\mathcal U=\mathcal U^\bullet$:
\begin{align*}\check C^{p,q,r}(\mathcal U^\bullet)&=\check C^{p,q}(\mathcal U^\bullet,\Omega^r)\\
    &=\prod_{\alpha^{(p)}_0,\dots,\alpha^{(p)}_q\in A^{(p)}} \Omega^r\left(U^{(p)}_{\alpha^{(p)}_0}\cap\dots\cap U^{(p)}_{\alpha^{(p)}_q}\right)
\end{align*}
which is a triple complex with boundary map $d+(-1)^r\del+(-1)^{p+r}\delta$, where $\delta$ denotes the usually boundary in Čech cohomology (see, e.g., \cite[section III.4]{Hartshorne}). As differential forms form a fine sheave, i.e., admit a partition of unity, one can contract the triple complex in the Čech direction and obtains the de Rham double complex on $M_\bullet$

On the other hand, one has an equality of vector spaces \[\Omega^{p,r}(NM_{\mathcal{U}})=\check C^{p,p,r}({\mathcal{U^\bullet}}).\] In other words $\Omega^{\bullet,r}(NM_{\mathcal{U}})$ coincides on the set-level with the diagonal of the bisimplicial complex \[\check C^{\bullet,\bullet,r}({\mathcal{U}^\bullet}).\] Moreover, one checks from the definitions, that the face and degeneracy maps coincide, too. Thus  $\Omega^{\bullet,r}(NM_{\mathcal{U}})$ and the diagonal of $\check C^{\bullet,\bullet,r}({\mathcal{U}})$ are the same simplicial objects.

 By the generalized Eilenberg-Zilber theorem (\cite[Ch. IV, Theorem 2.4]{simphomtheory}), there is a chain homotopy equivalence between the diagonal and the total complex of a bisimplicial complex and hence from \[\bigoplus_{p+r=n} \Omega^{p,r}(NM_{\mathcal{U}}) \to \bigoplus_{p+q+r=n} \check C^{p,q,r}({\mathcal{U}}).\]
This induces the asserted isomorphism in cohomology.
\end{proof}

\subsection{The classifying map}
Let $\mathcal U=\{U_\alpha\}$ be an open cover of $M$ and let the induced open cover of $E$ be denoted by $\pi\invers\mathcal U=\{V_\alpha\}$, $V_\alpha=\pi^{-1}(U_\alpha)$. These covers of $M$ and $E$ induce simplicial covers of $G^\bullet\times E$ and $G^\bullet\times M$ as follows (compare \cite[p.319]{Gomi}):

 Define the simplicial index set $A^{(p)}=A^{p+1}$ with face and degeneracy maps given by removing respective doubling of the $i$-th element. Then define the simplicial cover $\pi^{-1}\mathcal U^{(p)}=\{V^{(p)}_\alpha\}_{\alpha\in A^{(p)}}$  inductively by
\[V^{(p)}_\alpha=\bigcap_{i=0}^{p}\del_i^{-1}\left(V^{(p-1)}_{\del_i(\alpha)}\right),\]
where $V^{(0)}_\alpha=V_\alpha$ for any $\alpha\in A^{(0)}=A$. The following lemma gives an alternative description of this construction.
\begin{lemma}
 \[V^{(p)}_\alpha=\left\{(g_1,\dots,g_p,m)\mid m\in V_{\alpha_p},g_pm\in V_{\alpha_{p-1}},\dots,g_1\dots g_pm\in V_{\alpha_0}\right\}\]
\end{lemma}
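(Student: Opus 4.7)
The natural approach is induction on $p$. The base case $p=0$ is immediate: by definition $V^{(0)}_\alpha=V_\alpha$, and the claimed right-hand side reduces to the single condition $m\in V_{\alpha_0}$, since there are no group elements.

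For the inductive step, I would unfold the defining intersection
\[V^{(p)}_\alpha=\bigcap_{i=0}^{p}\del_i^{-1}\left(V^{(p-1)}_{\del_i\alpha}\right)\]
using the explicit formulas for the face maps on $G^\bullet\times M$ from \autoref{ex:actionmanifold}, and translate each summand via the induction hypothesis. The cleanest observation is that the two extreme face maps already produce the full set of conditions. Indeed, $\del_0$ drops the first entry, so by the hypothesis
\[\del_0^{-1}\bigl(V^{(p-1)}_{\del_0\alpha}\bigr)=\bigl\{(g_1,\dots,g_p,m)\mid g_{k+1}\cdots g_pm\in V_{\alpha_k}\text{ for }k=1,\dots,p\bigr\},\]
while $\del_p$ applies the action to $m$ and drops the last index, giving
\[\del_p^{-1}\bigl(V^{(p-1)}_{\del_p\alpha}\bigr)=\bigl\{(g_1,\dots,g_p,m)\mid g_{k+1}\cdots g_pm\in V_{\alpha_k}\text{ for }k=0,\dots,p-1\bigr\}.\]
The intersection of these two sets is already the right-hand side of the lemma.

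It remains to verify that for $1\leq i\leq p-1$ the condition $\del_i^{-1}\bigl(V^{(p-1)}_{\del_i\alpha}\bigr)$ imposes no extra constraint. For such $i$, the face map replaces $g_i,g_{i+1}$ by the product $g_ig_{i+1}$, so writing $h_1,\dots,h_{p-1}$ for the resulting entries, the partial products $h_{j+1}\cdots h_{p-1}$ collapse to $g_{j+1}\cdots g_p$ when $j<i$ and to $g_{j+2}\cdots g_p$ when $j\geq i$; after relabelling via $\del_i\alpha=(\alpha_0,\dots,\widehat{\alpha_i},\dots,\alpha_p)$ these yield exactly the conditions $g_{k+1}\cdots g_pm\in V_{\alpha_k}$ for $k\neq i$, which are a subset of those already enforced by $\del_0$ and $\del_p$.

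The only potential obstacle is the bookkeeping: one must carefully match the shifted index set $\del_i\alpha$ with the reshuffled tuple $\del_i(g_1,\dots,g_p,m)$ and check that the telescoping of the products is consistent with the hypothesis. Once this is done carefully for the inner face maps, the proof is complete by combining the equalities above.
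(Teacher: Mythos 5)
Your proof is correct and follows essentially the same route as the paper: induction on $p$, unwinding $V^{(p)}_\alpha=\bigcap_i\del_i^{-1}(V^{(p-1)}_{\del_i\alpha})$ via the explicit face maps and the induction hypothesis, with each $\del_i$ contributing all conditions except the $i$-th. The only cosmetic difference is that you single out $i=0$ and $i=p$ as already sufficient and check the inner face maps are redundant, whereas the paper treats all $i$ symmetrically and takes the intersection; both computations are the same.
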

\begin{proof}We will prove this by induction. For $p=0$ there is nothing to show. Let $p>0$:
\begin{align*}V^{(p)}_\alpha&=\bigcap_{i=0}^{p}\del_i^{-1}(V^{(p-1)}_{\del_i(\alpha)})\\
    &=\bigcap_{i=0}^{p}\left\{(g_1,\dots,g_p,m)\mid \del_i(g_1,\dots,g_p,m)\in V^{(p-1)}_{\del_i(\alpha)}\right\}\\
\end{align*}
For any $i=0,\dots,p$ we can apply the induction hypothesis to $\del_i(g_1,\dots,g_p,m)\in V^{(p-1)}_{\del_i(\alpha)}\!$, what implies 
\begin{align*}m\in V_{\alpha_p}\;,\;\dots\;,\; g_{i+2}&\dots g_pm\in V_{\alpha_{i+1}},\\&g_i\dots g_pm\in V_{\alpha_{i-1}}\;,\;\dots\;,\; g_1\dots g_pm\in V_{\alpha_0}.\end{align*}
This is almost the right-hand side of the condition to be proven, just the $i$-th term is missing. As $i$ runs from $0$ to $p$, we get for the intersection exactly 
\[V^{(p)}_\alpha=\left\{(g_1,\dots,g_p,m)\mid m\in V_{\alpha_p},g_pm\in V_{\alpha_{p-1}},\dots,g_1\dots g_pm\in V_{\alpha_0}\right\}.\]
\end{proof}
By the construction of Definition \ref{def:NMU}, we obtain a simplicial bundle
\[\pi\colon N(G^\bullet\times E)_{\pi^{-1}\mathcal U}\to N(G^\bullet\times M)_{\mathcal U}.\]
and the commutative diagram
\[\begin{tikzcd}
 N(G^\bullet\times E)_{\pi\invers\mathcal U}\arrow{d}{\pi_{\mathcal U}}\arrow{r}&G^\bullet\times E\arrow{d}\\
N(G^\bullet\times M)_{\mathcal U}\arrow{r}&G^\bullet\times M
\end{tikzcd}\]
induced by the inclusions of the covering sets is a pullback, since the cover we take on $G^\bullet\times E$ is induced by $\pi$ and $\mathcal U^\bullet$.

Suppose the cover $\mathcal U=\{U_\alpha\}_{\alpha\in A}$ of $M$ trivializes $E$ with trivialization \[\varphi_\alpha\from V_\alpha=\pi^{-1}(U_\alpha)\to U_\alpha\times K\] and transition functions $g_{\alpha\beta}\colon U_\alpha\cap U_\beta\to K$. Then there is an induced map \[\overline{\psi}\from N(G^\bullet\times E)_{\pi^{-1}\mathcal U}\to N\overline K,\] which is given on the intersection of $p+1$ covering sets of $G^p\times E$ 
\[V=\bigcap_{j=0}^p V^{(p)}_{\alpha^j_0,\dots,\alpha^j_p}\]
by 
\[(g_1,\dots,g_p,x)\mapsto (\varphi_{\alpha^0_0}(g_1\dots g_px),\varphi_{\alpha^1_1}(g_2\dots g_px),\dots,\varphi_{\alpha^p_p}(x))\in K^{p+1},\]
where, on the right-hand side, the maps $\varphi_\alpha$ are understood to be composed with the projection to $K$.

Next, we want to define $\psi:N(G^\bullet\times M)_{\mathcal U}\to NK$, such that $\overline\psi$ covers $\psi$. Therefore we need some additional transition functions of the bundle. Define 
\begin{align*}
h_{\alpha\beta}\colon G\times M\supset\del_0\invers U_\alpha\cap\del_1\invers U_\beta&\to K\\
(g,m)&\mapsto (\pi_2\circ \varphi_\alpha(gx))(\pi_2\circ \varphi_\beta(x))\invers,
\end{align*}
for any $x\in \pi\invers(m)$. This definition is independent of the chosen fiber element, as any other element in the fiber equals $xk$ for some $k\in K$ and 
\begin{align*}
 \left(\pi_2\circ \varphi_\alpha(g(xk))\right)(\pi_2\circ \varphi_\beta(xk))\invers&=(\pi_2\circ \varphi_\alpha(gx)k)(\pi_2\circ \varphi_\beta(x)k)\invers\\&=(\pi_2\circ \varphi_\alpha(gx))kk\invers(\pi_2\circ \varphi_\beta(x))\invers.
\end{align*}
As there exist local smooth sections, $G$ acts smoothly and the trivialization maps are smooth, $h_{\alpha\beta}$ is smooth, too.

Define $\psi$ on \[U=\bigcap_{j=0}^q U^{(p)}_{\alpha^j_0,\dots,\alpha^j_p}\] by 
\begin{multline}(g_1,\dots,g_p,m)\mapsto (h_{\alpha^0_0\alpha^1_1} (g_1,g_2\dots g_pm),\\h_{\alpha^1_1\alpha^2_2} (g_2,g_3\dots g_pm),\dots,h_{\alpha^{p-1}_{p-1}\alpha^p_p} (g_p,m),*).\end{multline}
The definitions of $\psi,\overline\psi$ and $\gamma$ yield a commutative diagram
\begin{equation}\label{eq:classmap}
\begin{tikzcd}
 N(G^\bullet\times E)_{\pi\invers\mathcal U}\arrow{d}{\pi_{\mathcal U}}\arrow{r}{\bar\psi}&N\overline K\arrow{d}{\gamma}\\
N(G^\bullet\times M)_{\mathcal U}\arrow{r}{\psi}&NK
\end{tikzcd}
\end{equation}
of simplicial manifolds. Later on we will need the following statement.
\begin{lemma}\label{lem:classmapsquare}
 The geometric realization of this diagram is a pullback.
\end{lemma}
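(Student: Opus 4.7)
The plan is to reduce the claim to the following standard fact: if $P\to X$ and $Q\to Y$ are principal $K$-bundles and $f\from P\to Q$ is a $K$-equivariant map covering some $\bar f\from X\to Y$, then the induced $K$-equivariant map $P\to X\times_Y Q$ is automatically a homeomorphism (it is a fiberwise bijection of principal $K$-bundles over a common base). So I only need to verify, after geometric realization, that both vertical maps in the diagram are principal $K$-bundles and that $\|\bar\psi\|$ is $K$-equivariant.

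For the vertical maps, the right-hand one $\|\gamma\|\from\|N\overline K\|\to\|NK\|$ is the principal $K$-bundle cited in the text (Proposition~5.3 of \cite{Dupont}). For the left vertical map, I would observe that the right $K$-action on $N(G^\bullet\times E)_{\pi\invers\mathcal U}$, given level-wise by $(g_1,\dots,g_p,x)\cdot k=(g_1,\dots,g_p,xk)$, is well defined because each set $V_\alpha=\pi\invers(U_\alpha)$ is $K$-invariant and hence so is every iterated preimage $V_\alpha^{(p)}$. At each simplicial level the map $\pi_{\mathcal U}$ is then just the restriction of the principal $K$-bundle $\id_{G^p}\times\pi$ to an open subset of $G^p\times M$, hence is itself a principal $K$-bundle. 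Therefore $\pi_{\mathcal U}\from N(G^\bullet\times E)_{\pi\invers\mathcal U}\to N(G^\bullet\times M)_{\mathcal U}$ is a simplicial principal $K$-bundle in the sense defined above, and the same local-trivialization argument used for $\gamma$ shows that its realization is a principal $K$-bundle.

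For the $K$-equivariance of $\bar\psi$, I would simply plug the action into the formula. A local trivialization $\varphi_\alpha\from V_\alpha\to U_\alpha\times K$ is $K$-equivariant; combined with the fact that the left $G$-action and the right $K$-action on $E$ commute, so that $(g_1\cdots g_j)(xk)=((g_1\cdots g_j)x)k$, one obtains
\[
\bar\psi(g_1,\dots,g_p,xk)=(\varphi_{\alpha^0_0}(g_1\cdots g_px)k,\dots,\varphi_{\alpha^p_p}(x)k)=\bar\psi(g_1,\dots,g_p,x)\cdot k,
\]
which is precisely the $K$-action on $N\overline K$. Thus $\bar\psi$, and hence $\|\bar\psi\|$, is $K$-equivariant and covers $\|\psi\|$ by the commutativity of the diagram~\eqref{eq:classmap}.

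Putting these pieces together: $\|\bar\psi\|$ is a $K$-equivariant map of principal $K$-bundles covering $\|\psi\|$, so the induced map to the fibre product $\|N(G^\bullet\times M)_{\mathcal U}\|\times_{\|NK\|}\|N\overline K\|$ is a homeomorphism and the realized square is a pullback. I expect the only subtle step to be checking that the glueing used in Dupont's Proposition 5.3 goes through verbatim for $\pi_{\mathcal U}$; everything else is a formal consequence of level-wise statements and the equivariance of the trivializations.
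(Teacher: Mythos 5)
Your argument is correct and is essentially the paper's own proof: the paper likewise deduces the pullback property from the $K$-equivariance of the bundle map, citing Proposition 8.6 of Chapter I of tom Dieck's book for exactly the standard fact you state at the outset. Your additional checks (that $\pi_{\mathcal U}$ realizes to a principal $K$-bundle and that $\bar\psi$ is levelwise $K$-equivariant) simply spell out the hypotheses of that citation.
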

\begin{proof}
    This follows, as the bundle map is $K$-equivariant, compare, e.g., Proposition 8.6 of {\cite[Ch. I]{tomDieck}}.
\end{proof}
\section{The main results}
\subsection{Dupont's simplicial forms, connections and transgression}\label{sec:Dupontforms}
Let $M_\bullet$ be a simplicial manifold. Dupont has given another definition for simplicial differential forms than the de Rham complex $\Omega^{\bullet,*}$.
\begin{defn}[Def. 6.2 of \cite{Dupont}]
 A \emphind{simplicial $n$-form} $\omega$ on $M_*$ is a sequence \symindex[o]{\omega^{(p)}}$\omega=\{\omega^{(p)}\}_{p\in \Natural}$, where each $\omega^{(p)}\in\Omega^n(\Delta^p\times M_p)$, such that
\[(\del^i\times \id_{M_p})^*\omega^{(p)}=(\id_{\Delta^{p-1}}\times\del_i)^*\omega^{(p-1)}\]
on $\Delta^{p-1}\times M_p$ for $i=0,\dots,p$ and $p\in \Natural$. The space of simplicial $n$-forms will be denoted by $\mathcal A^n(M_\bullet)$\symindex[A]{\mathcal A^*}.
\end{defn}
The differential $d\from\mathcal A^n(M_\bullet)\to\mathcal A^{n+1}(M_\bullet)$ is in any simplicial level the sum of $d_{M_p}+d_{\Delta^p}$. We can compare this complex of differential forms to the simplicial de Rham complex.
\begin{lemma}[Theorem 6.4 of {\cite{Dupont}}] The map
\begin{align*}
 \int_\Delta:\mathcal A^n(M_p)&\to \bigoplus_{p+q=n} \Omega^{q}(M_p)\\
\omega^{(p)}&\mapsto \int_{\Delta^p} \omega^{(p)}\in \Omega^{n-p}(M_p),
\end{align*}
given by integration over the simplices, induces a quasi-isomorphism of chain complexes.
\end{lemma}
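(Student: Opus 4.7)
My plan is to proceed in two stages: first verify that $\int_\Delta$ is a map of chain complexes, and then exhibit an explicit homotopy inverse constructed from Whitney's elementary forms on the standard simplex. The target is the total complex of the de Rham double complex $(\Omega^{p,q}(M_\bullet), d, (-1)^q\del)$ already considered in \autoref{thm:realiso}, so writing the total differential as $D = d + (-1)^{n-p}\del$ in total degree $n$ will set the sign conventions throughout.

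For the chain-map step I would split $d\omega^{(p)} = d_{M_p}\omega^{(p)} + d_{\Delta^p}\omega^{(p)}$ on each level and apply $\int_{\Delta^p}$. Fiber integration anticommutes with the exterior differential on the base according to the form degree along the fiber, so $\int_{\Delta^p} d_{M_p}\omega^{(p)} = (-1)^p d_{M_p}\int_{\Delta^p}\omega^{(p)}$, which matches the $d$-part of $D$. For the second piece, Stokes' theorem on $\Delta^p$ turns $\int_{\Delta^p}d_{\Delta^p}\omega^{(p)}$ into $\int_{\partial\Delta^p}\omega^{(p)} = \sum_{i=0}^p (-1)^i \int_{\Delta^{p-1}}(\del^i\times\id)^*\omega^{(p)}$, and the defining compatibility $(\del^i\times\id)^*\omega^{(p)} = (\id\times\del_i)^*\omega^{(p-1)}$ converts this into $\sum_i(-1)^i\del_i^*\int_{\Delta^{p-1}}\omega^{(p-1)}$, i.e. the simplicial boundary $\del$ with the correct sign. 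Keeping track of the signs carefully reproduces exactly the differential $D$.

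For the quasi-isomorphism step I would construct Dupont's Whitney-type section $E$ going the other way. On the standard $r$-simplex with barycentric coordinates $(t_0,\dots,t_r)$, the elementary form $e_{i_0\cdots i_p} = p!\sum_{j=0}^p (-1)^j t_{i_j}\,dt_{i_0}\wedge\cdots\widehat{dt_{i_j}}\cdots\wedge dt_{i_p}$ satisfies $\int_{\Delta^p} e_{0\cdots p} = 1$ and restricts coherently under face inclusions, so for $\eta\in\Omega^q(M_p)$ one can set $E(\eta)^{(r)}$ on $\Delta^r\times M_r$ to be the alternating sum, over strictly increasing maps $\mu\from[p]\to[r]$, of $e_{\mu(0)\cdots\mu(p)}\wedge\mu^*\eta$, where $\mu^*$ is the iterated face pull-back $M_r\to M_p$. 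The simplicial compatibility of $\{E(\eta)^{(r)}\}_r$ is a combinatorial identity between the restrictions of the $e_{i_0\cdots i_p}$ to faces and the cosimplicial relations among the $\del^i$; the identity $\int_\Delta\circ E = \id$ is then immediate from $\int_{\Delta^p}e_{0\cdots p}=1$.

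The main obstacle is the other composition: showing that $E\circ\int_\Delta$ is chain-homotopic to the identity on $\mathcal A^*(M_\bullet)$. I would build the homotopy level by level from a fiberwise contraction of $\Delta^p$ onto a fixed vertex, using the standard cone homotopy formula $h\omega = \int_0^1\iota_X H^*\omega\,ds$ where $H\from[0,1]\times\Delta^p\to\Delta^p$ is the linear contraction and $X$ is the associated radial vector field. The delicate point is that these fiberwise homotopies must be chosen coherently across all $p$ so that $\{h\omega^{(p)}\}_p$ again satisfies the simplicial compatibility relation; this forces a careful choice of contraction (contracting to the vertex $0$ works, since it is compatible with the coface maps $\del^i$ for $i\geq 1$) and produces extra boundary contributions for $\del^0$ that must be absorbed into the homotopy for $D$, not just $d$. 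Once this combinatorial bookkeeping is set up the identity $Dh + hD = \id - E\circ\int_\Delta$ reduces to the standard Poincaré-lemma-with-parameters calculation on each $\Delta^p\times M_p$.
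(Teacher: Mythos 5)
The paper does not actually prove this lemma; it is imported verbatim as Theorem 6.4 of Dupont's book, so there is no in-text argument to compare against. Your plan is, in effect, a reconstruction of Dupont's own proof: the Stokes computation showing that $\int_\Delta$ intertwines $d_{\Delta^p}$ with the simplicial differential $\del$, and the Whitney section $E$ built from the elementary forms $e_{i_0\cdots i_p}$ with $\int_{\Delta^p}e_{0\cdots p}=1$ and $\int_\Delta\circ E=\id$, are both correct and are exactly the right first two stages.

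The gap is in the third stage. A fiberwise cone homotopy coming from the linear contraction of $\Delta^p$ onto the vertex $0$ satisfies $d h + h d = \id - \varepsilon_0^*\circ(\text{restriction to the vertex }0)$ on $\Delta^p\times M_p$; that is, it homotopes $\omega^{(p)}$ to the pullback of its value at a single vertex, which is only the $l=0$ summand of $(E\circ\int_\Delta)(\omega)$. No choice of a single contraction, to vertex $0$ or otherwise, can produce the full right-hand side $\id - E\circ\int_\Delta$, because $E\int_\Delta\omega$ restricted to $\Delta^p\times M_p$ is a sum of contributions $e_{i_0\cdots i_l}\wedge(\cdots)$ from \emph{every} subsimplex $\{i_0<\dots<i_l\}$ of $[p]$, not just from one vertex. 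Dupont's actual homotopy operator is the weighted sum $k(\omega)=\sum_{l}\sum_{i_0<\dots<i_l} e_{i_0\cdots i_l}\wedge h_{i_l}\cdots h_{i_0}\omega$ of \emph{iterated} cone operators toward all vertices, with the elementary forms as coefficients; it is precisely this combination that makes the telescoping identity close up to $\id - E\circ\int_\Delta$ and, at the same time, makes $\{k(\omega)^{(p)}\}_p$ compatible with all coface maps (single-vertex contractions are not natural under the $\del^i$ that move the chosen vertex). So the ``combinatorial bookkeeping'' you defer is not bookkeeping on top of your formula; it replaces your formula. With that correction the remaining steps go through as you describe.
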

Recall that a connection on a principal $K$-bundle $E$ is 1-form $\vartheta\in \Omega^1(E,\mathfrak k)$ with values in the Lie algebra, which is $K$-equivariant and a section of the differential of the $K$-action $E\times \mathfrak k\to TE$.
\begin{defn}
A \emph{connection} \index{connection!simplicial principal $K$-bundle} on a simplicial principal $K$-bundle $E\to M$ is a Dupont-1-form $\vartheta\in \mathcal A^1(E,\mathfrak k)$ \symindex[t]{\vartheta} such that the restriction to any $\Delta^p\times E_p$ is a connection on the bundle 
\[\Delta^p\times E_p\to\Delta^p\times M_p.\]
The curvature of the connection is defined as
\[\Omega=d\vartheta+\frac12 [\vartheta,\vartheta]\in\mathcal A^2(E,\mathfrak k),\]
where $[\vartheta,\vartheta]$ denotes the image of $\vartheta\wedge \vartheta$ under the Lie bracket $[\cdot,\!\cdot]\from \mathfrak g\otimes \mathfrak g \to \mathfrak g$.
\end{defn}
The Chern-Weil construction turns over to the simplicial setting (compare \cite{Dupont}):
\begin{theorem}Let $P\in I^q(K)$ be an invariant symmetric polynomial, $E_\bullet\to M_\bullet$ a simplicial principal $K$-bundle with connection $\vartheta$ and curvature $\Omega$.\label{thm:chernweil}
    \begin{enumerate}
        \item $P(\Omega^q)=P(\Omega,\dots,\Omega)\in \mathcal A^{2q}(E)$ is a basic $2q$-form, i.e., it is an element of $\pi^*\mathcal A^{2q}(M)$ or in words a pullback from the base space $M$. The form $\omega_P(\vartheta)\in \mathcal A^{2q}(M)$, s.t., $\pi^*\omega_P(\vartheta)=P(\Omega^q)$, is called \emph{characteristic form} of $(E,\vartheta)$. \index{characteristic form!Dupont model}
		\item $I^q(K)\ni P\mapsto \omega_P(\vartheta)\in \mathcal A^{2q}(M)$ is an algebra homomorphism.
		\item $\omega_P(\vartheta)\in \mathcal A^{2q}(M)$ is closed.	
		\item Given two simplicial connections $\vartheta_0,\vartheta_1$ on $E_\bullet$, then there is a path of connections from the first to the second, i.e., a connection $\tilde{\vartheta}$ on $\Real\times E_\bullet\to \Real\times M_\bullet$ such that $\tilde{\vartheta}|_{\{i\}\times M}=\vartheta_i$ for $i=0,1$. The \emph{transgression form}\index{transgression form!simplicial}\symindex[o]{\widetilde{\omega}} \[\widetilde{\omega}_P(\vartheta_1,\vartheta_0)=\int_{[0,1]\times M/M} \omega_P(\tilde{\vartheta}) \in \mathcal A^{2q-1}(M)/d\mathcal A^{2q-2}(M),\] is independent of the path chosen and satisfies as well \begin{align}d\tilde{\omega}_P(\vartheta_1,\vartheta_0)&=\omega_P(\vartheta_1)-\omega_P(\vartheta_0)\label{eq:trans1}\end{align} as for any third connection $\vartheta_2$ on $E$\begin{align}\tilde{\omega}_P(\vartheta_2,\vartheta_1)+\tilde{\omega}_P(\vartheta_1,\vartheta_0)&=\tilde{\omega}_P(\vartheta_2,\vartheta_1).\label{eq:trans2}\end{align}
		\item Let $f\from N\to M$ be a smooth map, then $f^*\omega_P(\vartheta)=\omega_P((\bar f,f)^*\vartheta)$.
		\item The class $c_P(E)=[\omega_P(\vartheta)]\in H^{2n}(M)$ \symindex[c]{c_P} is called \emphind{characteristic class} (defined via the Chern-Weil-Construction). It is independent of the connection and does only depend on the isomorphism class of the bundle.
    \end{enumerate}
\end{theorem}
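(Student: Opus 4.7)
The plan is to reduce every claim to the classical (non-simplicial) Chern-Weil theorem applied at each simplicial level $\Delta^p\times E_p\to\Delta^p\times M_p$, using the fact that the Dupont compatibility conditions propagate automatically from $\vartheta$ to $\Omega$, to $P(\Omega^q)$, and to the unique basic form $\omega_P(\vartheta)^{(p)}$ descending it. The key observation is that a simplicial connection is, by definition, a family of ordinary connections in each simplicial level, and the Dupont compatibility $(\partial^i\times\id)^*\omega^{(p)}=(\id\times\partial_i)^*\omega^{(p-1)}$ is preserved by the operations $d$, wedge product, the Lie bracket, and the contraction with a $K$-equivariant polynomial, since all of these commute with pullback.

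For items (1)--(3), first observe that $\Omega=d\vartheta+\tfrac12[\vartheta,\vartheta]\in\mathcal A^2(E,\mathfrak k)$ by naturality of $d$ and the bracket. At each fixed $p$, the classical Chern-Weil theorem applied to the principal $K$-bundle $\Delta^p\times E_p\to\Delta^p\times M_p$ yields that $P(\Omega^q)^{(p)}\in\Omega^{2q}(\Delta^p\times E_p)$ is basic, closed, and that $P\mapsto \omega_P(\vartheta)^{(p)}$ is an algebra homomorphism. This produces, for every $p$, a unique form $\omega_P(\vartheta)^{(p)}\in\Omega^{2q}(\Delta^p\times M_p)$ with $\pi^*\omega_P(\vartheta)^{(p)}=P(\Omega^q)^{(p)}$. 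The simplicial compatibility of the family $\{\omega_P(\vartheta)^{(p)}\}$ follows from uniqueness: both $(\partial^i\times\id)^*\omega_P(\vartheta)^{(p)}$ and $(\id\times\partial_i)^*\omega_P(\vartheta)^{(p-1)}$ are forms on $\Delta^{p-1}\times M_p$ whose pullbacks to $\Delta^{p-1}\times E_p$ agree (being $(\partial^i\times\id)^*P(\Omega^q)^{(p)}=(\id\times\partial_i)^*P(\Omega^q)^{(p-1)}$ since $\Omega$ is a Dupont form), so injectivity of $\pi^*$ on forms on the base forces them to agree. Closedness and the algebra property are verified level-wise.

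For (4), I would construct the affine path $\tilde\vartheta=(1-t)p_1^*\vartheta_0+tp_1^*\vartheta_1$ on $\mathbb R\times E\to\mathbb R\times M$, which is again a simplicial connection by convex combination preserving the Dupont compatibility. The characteristic form $\omega_P(\tilde\vartheta)\in\mathcal A^{2q}(\mathbb R\times M)$ is defined level-wise, and applying fiber integration over $[0,1]$ at each level produces $\widetilde\omega_P(\vartheta_1,\vartheta_0)\in\mathcal A^{2q-1}(M)$. Compatibility with the face maps is automatic because integration over $[0,1]$ commutes with pullback along maps that are the identity on the $[0,1]$-factor. The relation (1) then follows level-wise from Stokes' theorem applied to the cylinder, while path-independence and the cocycle relation (2) follow by the standard argument of considering a two-parameter family on $\mathbb R^2\times M$ respectively by concatenating paths and using Stokes a second time.

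Items (5) and (6) are immediate: naturality holds at each simplicial level because classical Chern-Weil is natural, and the independence of the connection in cohomology is (1) together with the transgression formula. The main obstacle I anticipate is not any one particular step but the bookkeeping of verifying that all uniqueness/basicness arguments descend coherently across simplicial levels; the content, however, is nothing more than checking that the unique base-space representative provided by the non-simplicial Chern-Weil theorem at each level assembles into a Dupont form, which ultimately rests on the naturality of the whole construction under pullback along the face maps.
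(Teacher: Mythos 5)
Your proposal is correct and follows essentially the same route as the paper: both reduce the statements to the classical Chern--Weil theorem applied level-wise to the bundles $\Delta^p\times E_p\to\Delta^p\times M_p$, check that the Dupont compatibility conditions propagate through $d$, the bracket, $P$, and the descent to the base (the paper uses the injectivity of $\pi^*$ at exactly the same point), and derive the transgression identities from Stokes' theorem on a cylinder, respectively on a $2$-simplex of connections. The only difference is one of presentation --- the paper writes out the closedness and algebra-homomorphism computations explicitly rather than citing the non-simplicial theorem --- which does not change the substance of the argument.
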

\begin{proof}
 These statements are more or less standard, but we give proofs for convenience of the reader.

First, the simplicial form is basic, if this is true on any simplicial level, where it is a standard fact that the curvature is horizontal and equivariant (see e.g. \cite[Prop 3.12 b)]{Dupont}) and as $P$ is invariant, $P((\Omega^{(p)})^q)$ is basic. To the second assertion: the question about sums and scalars follows clearly from the definition. Let $Q\in I^l(K)$, then \[(P\cdot Q)(\Omega^{q+l})= \frac{1}{(q+l)!}\sum_{\sigma} P(\Omega^{q})\wedge Q(\Omega^l) =P(\Omega^{q})\wedge Q(\Omega^l).\]Thus $P\mapsto \omega_P(\vartheta)$ is a homomorphism of algebras.

To show closedness, it is, since $\pi^*\from\mathcal A^{2q}(M)\to\mathcal A^{2q}(E)$ is injective, sufficient to show that $dP(\Omega^q)\in\mathcal A^{2q+1}(E)$ vanishes. We compute
	\begin{equation}\label{eq:calcdP}\begin{aligned}
		dP(\Omega^q)&=qP(d\Omega\wedge\Omega^{q-1})\quad\text{(as P is symmetric)}\\
	    &=qP((\frac12 d[\vartheta,\vartheta])\wedge\Omega^{q-1})\\
		&=qP(([d\vartheta,\vartheta])\wedge\Omega^{q-1})\quad\text{(as } d[\vartheta,\vartheta]=[d\vartheta,\vartheta]-[\vartheta,d\vartheta])\\
		&=qP(([\Omega,\vartheta])\wedge\Omega^{q-1})\quad\text{(as } [[\vartheta,\vartheta],\vartheta]=0 \text{ by the Jacobi identity)}.
	\end{aligned}
	\end{equation}
	On the other hand, $P$ is $K$-invariant. Let $X,Y_1,\dots,Y_q\in \mathfrak k$ and differentiating the equation 
	\[P(Y_1,\dots,Y_q)=P(\Ad(\exp(tX))Y_1,\dots,\Ad(\exp(tX))Y_q)\]
	by $t$ at $t=0$ yields
		\[0=\sum_{i=1}^q P(Y_1,\dots,[X,Y_i],\dots,Y_q)=\sum_{i=1}^q P([X,Y_i],Y_1,\dots,\hat Y_i,\dots,Y_q),\]
	what shows $P(([\Omega,\vartheta])\wedge\Omega^{q-1})=0$. Thus $dP(\Omega^q)=0$ by equation \eqref{eq:calcdP}.

To prove statement four about the transgression, define the connection $\tilde{\vartheta}_t=(1-t)\pr_E^*\vartheta_0+t\pr_E^*\vartheta_1$ on $\Real\times E$, which is obviously a path of connections from $\vartheta_0$ to $\vartheta_1$. Thus \eqref{eq:trans1} follows from Stokes theorem (applied to any simplicial level). Let $A^2=\{x_0+x_1+x_2=1\}\subset\Real^3$ be the hyperplane, whose intersection with the positive octant is $\Delta^2$. Define by $\hat{\vartheta}=\sum_ix_i\vartheta_i$ a the connection on $A^2\times E_\bullet\to A^2\times M_\bullet$. By Stokes one has \begin{align}\tilde{\omega}_P(\vartheta_2,\vartheta_1)+\tilde{\omega}_P(\vartheta_1,\vartheta_0)-\tilde{\omega}_P(\vartheta_2,\vartheta_1)=d\int_{\Delta^2}\omega_P(\hat{\vartheta}),\label{eq:trans3}\end{align} from which \eqref{eq:trans2} follows. To show the independence from the chosen path, take $\vartheta_2=\vartheta_1$ and define another connection $\hat{\vartheta}$ on $A^2\times E_\bullet\to A^2\times M_\bullet$ in the following way: $\hat{\vartheta}$ restricts to $\vartheta_i$ on the $i$-th vertex of the simplex (which is the intersection of the hyperplane with the non-negative octant), it is constantly $\pr_E^*\vartheta_1$ on the edge $(1,2)$ (from vertex $1$ to vertex $2$), the convex combination on $(0,1)$, an arbitrary path on the last edge and an interpolation in the interior (say the convex combination on lines parallel to $(1,2)$). Then \eqref{eq:trans3} implies the independence of the path, because $\tilde{\omega}_P(\vartheta_2,\vartheta_1)$ is zero as an integral over a pullback form.

The 5th statement, about pullbacks, follows from the two facts that, firstly, the curvature of the pullback connection $(\bar f,f)^*\vartheta$ is $(\bar f,f)^*\Omega$, i.e. the pullback of the curvature and, secondly, pullbacks are an algebra homomorphism on differential forms. For the last assertion: the difference of the characteristic forms for two connections is an exact form by 4., thus the class is independence of the connection. That the class only depends on the isomorphism class follows from 5. applied to an isomorphism of the bundles, which covers the identity map on the base space.
\end{proof}

\subsection{Equivariant characteristic forms}
Let $G$ and $K$ be Lie groups and $E$ a smooth $G$-equivariant principal $K$-bundle over a smooth manifold $M$ with $G$-invariant connection $\vartheta\in\Omega^1(E,\mathfrak k)^G$. 
\begin{defn}[see {\cite[p.543]{BerlineVergne83}}]\label{Def:momentmap}
	The \emphind{moment map} of $\vartheta$ is defined as \symindex[m]{\mu}
\[\mu^\vartheta\from \mathfrak g\ni X\mapsto \iota(X^\sharp)\vartheta\in C^\infty(E,\mathfrak k)^K.\]
\end{defn}
\begin{lemma}
The moment map is $G$- and $K$-equivariant.
\end{lemma}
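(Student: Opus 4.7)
The plan is to verify the two equivariance properties separately, each by a direct computation that reduces to (a) a naturality statement for the fundamental vector field $X^\sharp$ under the action in question and (b) an equivariance property of the connection form $\vartheta$.

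First I would establish the necessary behaviour of the fundamental vector field. Since the left $G$-action and the right $K$-action on $E$ commute, $R_k \circ e^{tX} = e^{tX} \circ R_k$ for every $X \in \mathfrak{g}$ and $k \in K$, so differentiating at $t=0$ gives $(R_k)_* X^\sharp_x = X^\sharp_{xk}$. Similarly, from $e^{t\,\mathrm{Ad}(g)X} = g\,e^{tX} g^{-1}$ one obtains $(\mathrm{Ad}(g)X)^\sharp_x = (L_g)_* X^\sharp_{g^{-1}x}$.

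For $K$-equivariance of the function $\mu^\vartheta(X)\from E\to\mathfrak{k}$, I would compute, using the above and the defining equivariance $R_k^*\vartheta = \mathrm{Ad}(k^{-1})\vartheta$ of a principal connection,
\begin{align*}
\mu^\vartheta(X)(xk) &= \vartheta_{xk}\bigl(X^\sharp_{xk}\bigr) = \vartheta_{xk}\bigl((R_k)_*X^\sharp_x\bigr) = (R_k^*\vartheta)_x(X^\sharp_x)\\
&= \mathrm{Ad}(k^{-1})\,\vartheta_x(X^\sharp_x) = \mathrm{Ad}(k^{-1})\,\mu^\vartheta(X)(x),
\end{align*}
so that $\mu^\vartheta(X)$ lies in $C^\infty(E,\mathfrak{k})^K$ as claimed.

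For $G$-equivariance, where $G$ acts on $\mathfrak{g}$ by the adjoint representation and on $C^\infty(E,\mathfrak{k})^K$ by $(g\cdot f)(x)=f(g^{-1}x)$, I would combine the second formula above with the assumed $G$-invariance $L_g^*\vartheta=\vartheta$ of the connection:
\begin{align*}
\mu^\vartheta(\mathrm{Ad}(g)X)(x) &= \vartheta_x\bigl((\mathrm{Ad}(g)X)^\sharp_x\bigr) = \vartheta_x\bigl((L_g)_* X^\sharp_{g^{-1}x}\bigr)\\
&= (L_g^*\vartheta)_{g^{-1}x}(X^\sharp_{g^{-1}x}) = \vartheta_{g^{-1}x}(X^\sharp_{g^{-1}x}) = \mu^\vartheta(X)(g^{-1}x).
\end{align*}
This is exactly the relation $\mu^\vartheta\circ\mathrm{Ad}(g) = L_g\cdot \mu^\vartheta$ and finishes the proof.

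The argument is entirely formal and I do not anticipate any real obstacle; the only point to be careful about is the bookkeeping for the two commuting actions and the sign/inverse conventions for how $G$ acts on functions on $E$ versus how it acts on $\mathfrak{g}$ via the adjoint representation. Everything else is a single-line manipulation using the two structural identities $R_k^*\vartheta=\mathrm{Ad}(k^{-1})\vartheta$ and $L_g^*\vartheta=\vartheta$.
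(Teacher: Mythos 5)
Your proof is correct and follows essentially the same route as the paper: the $G$-equivariance is verified by exactly the same computation (the paper writes it as $\mu(\Ad_g X)(gx)=\mu(X)(x)$, which is your identity evaluated at $gx$), and the $K$-equivariance, which the paper merely asserts as following from the $K$-equivariance of $\vartheta$, you spell out correctly using $(R_k)_*X^\sharp_x=X^\sharp_{xk}$.
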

\begin{proof}
    The $K$-equivariance of $\mu$ follows from the $K$-equivariance of $\vartheta$. Now let $g\in G$, then
\begin{align*}\mu(\Ad_gX)(gx)&=\vartheta(gx)\left[\ddt \left(\exp(gXg\invers)(gx)\right)\right]\\
    &=\vartheta(gx)\left[g\ddt \left(\exp(X)x\right)\right]\\
&=\vartheta(gx)[gX^\sharp]\\
&=\vartheta(x)[X^\sharp]\quad\text{by the $G$-invariance of $\vartheta$}\\
&=\mu(X)(x).
\end{align*}
\end{proof}

Given $\vartheta\in\Omega^1(E,\mathfrak k)^G$, one defines a simplicial connection on $G^\bullet\times E$ as follows (compare \cite[p.104]{Getzler}):
Let $\vartheta_i$ be the pullback of $\vartheta$ to $\Delta^p\times G^p\times E$ along
\begin{align*}
 \Delta^k\times G^k\times E&\to E\\
(t_0,\dots,t_k,g_1,\dots,g_k,e)&\mapsto g_{i+1}\dots g_ke.
\end{align*}
Now define $\Theta\in\mathcal A^1(G^\bullet\times E,\mathfrak k)$ on $G^\bullet\times E$ by
\begin{align}\label{eq:Theta}\Theta^{(p)}&=t_0 \vartheta_0 + t_1\vartheta_1+ \dots+t_p\vartheta_p,\end{align}
where the $t_i$ are the barycentric coordinates on the simplex.
These forms satisfy
\[(\del^i\times \id_{G^p\times E})^*\Theta^{(p)}=(\id_{\Delta^{p-1}}\times\del_i)^*\Theta^{(p-1)}\]
and hence $\Theta$ is a simplicial Dupont one form. 

We are now going to calculate the characteristic form of the simplicial connection $\Theta$. We will see that this actually leads to the equivariant characteristic form of $\vartheta$ as defined by Berline and Vergne, i.e., one replaces the curvature by the sum of the curvature and the moment map. This is a more detailed reformulation of \cite[Section 3.3.]{Getzler}.
\begin{theorem}\label{thm:classform} Let $P\in I^*(K)$ be an invariant symmetric polynomial.
    \[\pr_0\left(\mathcal J\left(\int_\Delta \omega_P(\Theta)\right)\right)= P(\Omega^\vartheta+\mu^\vartheta)\in S^*(\mathfrak g\dual)\otimes\Omega^*(M)\]
Here $\mathcal J$ is the map defined in \ref{def:Getzlermap} and $\pr_0$ is the projection from the Getzler complex to its zeroth simplicial level. As $\vartheta$ is $G$-invariant, the equation actually holds in $\Omega_G(M)=\left(S^*(\mathfrak g\dual)\otimes\Omega^*(M)\right)^G$.
\end{theorem}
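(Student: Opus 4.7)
The plan is to compute both sides of the asserted identity directly and match them bidegree-by-bidegree. The first step is to observe that $\pr_0\mathcal J$ admits a drastically simpler description: for $\omega\in\Omega^d(G^p\times M)$, the shuffle set $S(0,p)$ contains only the identity, each $X^{(\pi)}_j$ collapses to $X$, and the map $i_\pi$ reduces to the inclusion $i_e\colon M\to G^p\times M$, $m\mapsto(e,\dots,e,m)$, yielding
\[\pr_0\mathcal J(\omega)(X)\;=\;i_e^*\bigl(\iota_1(X)\iota_2(X)\cdots\iota_p(X)\,\omega\bigr).\]

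Next, I analyze the curvature $\Omega^{(p)}=d\Theta^{(p)}+\tfrac12[\Theta^{(p)},\Theta^{(p)}]$ on $\Delta^p\times G^p\times E$. Writing $\vartheta_i=\pi_i^*\vartheta$ with $\pi_i(t,g,x)=g_{i+1}\cdots g_p\cdot x$, the differential $(d\pi_i)_{(t,e,x)}$ sends $X\in\g$ placed at the $j$-th $G$-slot to $X^\sharp_x$ if $j>i$ and to $0$ otherwise. This yields the pointwise identities, at $g=(e,\dots,e)$,
\[\vartheta_i|_{g=e}=\vartheta,\quad \iota_j(X)\vartheta_i|_{g=e}=\mathbf{1}_{[j>i]}\mu^\vartheta(X),\quad \iota_j(X)\pi_i^*\alpha|_{g=e}=\mathbf{1}_{[j>i]}\iota(X^\sharp)\alpha\]
for any form $\alpha$ on $E$. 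Together with the identity $\iota(X^\sharp)\Omega^\vartheta=[\mu^\vartheta(X),\vartheta]-d\mu^\vartheta(X)$ (which follows from $G$-invariance of $\vartheta$ via Cartan's formula), these express $\iota_j(X)\Omega^{(p)}|_{g=e}$ in terms of $\mu^\vartheta(X)$ and $\iota(X^\sharp)\Omega^\vartheta$ with coefficients polynomial in the barycentric coordinates $t_i$.

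The main computation then distributes $\iota_1(X)\cdots\iota_p(X)$ across the $q$ entries of $P((\Omega^{(p)})^q)$ via the graded Leibniz rule and the symmetry of $P$. This produces a sum over functions $f\colon\{1,\dots,p\}\to\{1,\dots,q\}$ assigning each contraction $\iota_j$ to one of the $P$-slots. After integrating over $\Delta^p$, the simplex integrals of polynomials in the $t_i$'s (the standard normalization yielding factors of the form $n_0!\cdots n_p!/(n_0+\cdots+n_p+p)!$) combine with the combinatorial sums to produce, at simplicial level $p$, exactly $\binom{q}{p}P(\mu^\vartheta(X)^p\wedge(\Omega^\vartheta)^{q-p})$. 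Summing over $p$ then yields the binomial expansion of $P((\Omega^\vartheta+\mu^\vartheta)^q)$, proving the theorem.

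The main obstacle will be the combinatorial and sign bookkeeping in the final step: verifying that contributions from non-injective functions $f$ (several $\iota_j$'s hitting the same $P$-slot) either vanish by form-degree reasons or collapse via the Cartan-type identity to the expected injective-type terms; controlling the graded signs arising from reorderings of $dt_i$'s, $\vartheta$'s, and the $\iota_j$'s; and checking that the sum over $i\in\{0,\dots,p\}$ of which $\vartheta_i$ appears in each $dt$-factor combines with the simplex integrals to produce the binomial coefficient $\binom{q}{p}$ with the correct sign. This refines the content that Getzler alludes to in his Section 3.3.
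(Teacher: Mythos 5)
Your preliminary steps are correct: the simplification $\pr_0\mathcal J(\omega)(X)=i_e^*\bigl(\iota_1(X)\cdots\iota_p(X)\,\omega\bigr)$, the computation $\iota_j(X)\vartheta_i|_{g=e}=\mathbf 1_{[j>i]}\mu^\vartheta(X)$, and the identity $\iota(X^\sharp)\Omega^\vartheta=[\mu^\vartheta(X),\vartheta]-d\mu^\vartheta(X)$ all check out. But the proof has a genuine gap exactly where you flag ``the main obstacle'': the claim that the level-$p$ contribution equals $\binom{q}{p}P\bigl((\mu^\vartheta)^p\wedge(\Omega^\vartheta)^{q-p}\bigr)$ is asserted, not proven, and it is the entire content of the theorem. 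In your direct Leibniz expansion, contractions $\iota_j(X)$ necessarily also hit the $t_i\,d\vartheta_i$ and $t_it_{i'}[\vartheta_i,\vartheta_{i'}]$ parts of $\Omega^{(p)}$, producing terms in $d\mu^\vartheta(X)$ and $[\mu^\vartheta(X),\vartheta]$ that are absent from $P(\Omega^\vartheta+\mu^\vartheta)$ and must therefore cancel against each other after the simplex integration; you have not exhibited this cancellation, nor the sign and shuffle bookkeeping that makes the surviving terms assemble into the binomial coefficient. As it stands the argument establishes the theorem only in the trivially checkable cases (e.g.\ $q=1$).

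The paper sidesteps this morass with one structural observation you are missing: the composite $\pr_0\circ\mathcal J\circ\int_\Delta$ is a homomorphism of algebras from the Dupont forms $\mathcal A^*(G^\bullet\times M)$ to $S^*(\g\dual)\otimes\Omega^*(M)$ (this is where the shuffle combinatorics, the volume $\tfrac1{p!}$ of $\Delta^p$, and the count $\tfrac{(p_1+p_2)!}{p_1!\,p_2!}$ of shuffles are dealt with once and for all, for arbitrary forms). Granting multiplicativity, the theorem reduces to computing $\pr_0\,\mathcal J\int_\Delta\Omega$ for the curvature alone: since $\Theta$ has simplex-degree zero, $\int_\Delta\Omega$ lives only in simplicial levels $0$ and $1$, where it equals $\Omega^\vartheta$ and $\del_1^*\vartheta-\del_0^*\vartheta$ respectively, and restricting the latter to $g=e$ gives precisely $\mu^\vartheta$; applying $P$ then yields $P(\Omega^\vartheta+\mu^\vartheta)$ with no cross terms ever appearing. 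If you want to keep your direct route you must actually carry out the cancellation argument for general $q$; otherwise, prove the multiplicativity lemma and your computation of the curvature contractions at $g=e$ becomes all that is needed.
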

\begin{proof}
Let  \[\Omega=d\Theta +\frac12[\Theta,\Theta]\]
denote the curvature of $\Theta$. We should refine the grading of the simplicial Dupont forms (compare \cite[p.91]{Dupont}): A form in $\mathcal A^*(G^\bullet\times M)$ restricts to a differential form on the product $\Delta^p\times (G^p\times M)$ for each $p$. Thus we can grade the form by the differential form degree  on the simplex $\Delta^p$ part and on the form degree on the manifold part $G^p\times M$, thus the degree defined before is the sum of both. By construction, the form degree of $\Theta$ in direction of the simplex is zero. Thus the form degree of $\Omega$ in simplex direction can be at most one. Therefore
\[\int_{\Delta}\Omega=\Omega_0+\Omega_1\in\Omega^2(E,\mathfrak k)\oplus \Omega^1(G\times E,\mathfrak k).\]
Here $\Omega_0=d\Theta^{(0)} +\frac12[\Theta^{(0)},\Theta^{(0)}]=d\vartheta+\frac12[\vartheta,\vartheta]$ is exactly the curvature of $\vartheta$. While
\begin{align*}
 \Omega_1&=\int_{\Delta^1} d\Theta^{(1)} +\frac12[\Theta^{(1)},\Theta^{(1)}]\\
&=\int_{\Delta^1} dt_0 \vartheta_0+dt_1 \vartheta_1\\
&=\int_0^1 dt (\vartheta_0- \vartheta_1)\\
&=\vartheta_0- \vartheta_1\\
&=\del_1^*\vartheta-\del_0^*\vartheta.
\end{align*}
To investigate this further, let $X=X_G+X_E$ be a vector field on $G\times E$, decomposed in the directions of $G$ and $E$, then
\begin{align*}
(\Omega_1)(g,p)[X]&=(\del_1^*\vartheta)(g,p)[X]-(\del_0^*\vartheta)(g,p)[X]\\
&=\vartheta(gp)[(T\del_1)(X)]-\vartheta(p)[(T\del_0)X]\\
&=\vartheta(gp)[X_G^\sharp]+\vartheta(gp)[gX_E]-\vartheta(p)[X_E]\\
&=\vartheta(p)[g\invers (X_G^\sharp(gp))],
\end{align*}
where $X_G^\sharp$ denotes the fundamental vector field of $X_G$. Restricting this to $e\in G$, what is the same as applying the map $\pr_0\mathcal J$, one obtains the definition of the moment map $\mu^\vartheta=\iota(X^\sharp)\vartheta$. 

The statement of the theorem now follows from the next lemma.
\end{proof}
\begin{lemma}The composition of maps
    \[\pr_0\circ \mathcal J\circ\int_\Delta\from \mathcal A^*(G^\bullet\times M)\to S^*(\mathfrak g\dual)\otimes\Omega^*(M)\] is a homomorphism of algebras.
\end{lemma}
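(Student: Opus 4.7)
The composition $\Phi \defeq \pr_0 \circ \mathcal{J} \circ \int_\Delta$ admits an explicit formula, which is the first step. For $k=0$ the shuffle set $S(0,p)$ contains only the identity permutation and $i_\pi$ collapses to the inclusion $i_e \colon M \hookrightarrow G^p \times M$, $x \mapsto (e,\ldots,e,x)$, so for any Dupont $n$-form $\omega$
\[
\Phi(\omega)(X) \;=\; \sum_{p \ge 0} \int_{\Delta^p} i_e^{*}\,\iota_1(X)\,\iota_2(X)\,\cdots\,\iota_p(X)\,\omega^{(p)} \;\in\; \bigoplus_p S^p(\mathfrak{g}\dual) \otimes \Omega^{n-2p}(M),
\]
where $\iota_j(X)$ is contraction with $X$ viewed as a tangent vector at the identity of the $j$-th copy of $G$. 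I would first derive this formula and verify the stated bi-degrees.

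Second, I would expand $\Phi(\omega \wedge \eta)(X)$ at polynomial degree $p$ by applying the Leibniz rule for the antiderivations $\iota_j(X)$ to the level-wise wedge $\omega^{(p)} \wedge \eta^{(p)}$. This produces a signed sum over set-partitions $I \sqcup J = \{1,\ldots,p\}$ of terms $\pm \bigl(i_e^{*}\iota_I\omega^{(p)}\bigr) \wedge \bigl(i_e^{*}\iota_J\eta^{(p)}\bigr)$. Because $i_e^{*}$ annihilates any remaining positive $G_\ell$-form-degree, only those components of $\omega^{(p)}$ having $G_\ell$-form-degree exactly $1$ for $\ell\in I$ and $0$ for $\ell \in J$ survive, and symmetrically for $\eta^{(p)}$.

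Third, I would match these surviving terms with the expansion of $\Phi(\omega)(X) \wedge \Phi(\eta)(X)$. Two ingredients combine. (a) Dupont's face compatibility, applied iteratively with the face maps $\del_0$ and $\del_p$, which at $g_\ell = e$ act as plain projections off the $\ell$-th copy of $G$, identifies $i_e^{*}\iota_I \omega^{(p)}$ with the pullback along a suitable collapsing map $\Delta^p \to \Delta^{p_1}$ of $i_e^{*}\iota_1(X)\cdots\iota_{p_1}(X)\omega^{(p_1)}$, and analogously for $\eta$; this is precisely where the particular form of the face maps of $G^\bullet \times M$ enters. (b) The Eilenberg--Zilber shuffle decomposition $\Delta^p = \bigcup_{\sigma \in S(p_1,p_2)} \sigma\cdot(\Delta^{p_1}\times\Delta^{p_2})$ together with Fubini converts the sum over the $\binom{p}{p_1}$ subsets of integrals over $\Delta^p$ into the product $\bigl(\int_{\Delta^{p_1}} \bigr)\wedge\bigl(\int_{\Delta^{p_2}}\bigr)$ of the factor forms, matching the polynomial-degree-$p$ component of $\Phi(\omega) \wedge \Phi(\eta)$.

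The main obstacle I anticipate is sign bookkeeping: the Koszul signs from the Leibniz rule, the parity signs from reordering wedge factors of mixed form-degrees during the identification in (a), and the shuffle signs from (b) must cancel to yield a sign-free identity. A secondary technical point is justifying (a), since Dupont's definition requires only face-map compatibility whereas inserting identity elements into $G$-slots formally looks degeneracy-like; the saving grace is that the face maps $\del_0$ and $\del_p$ of $G^\bullet \times M$ degenerate to ordinary projections when the group element being merged is $e$, so face compatibility alone suffices to propagate the restriction of $\omega^{(p_1)}$ to the partial restriction of $\omega^{(p)}$ needed here.
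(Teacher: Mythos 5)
Your proposal follows essentially the same route as the paper's proof: the explicit $k=0$ formula for $\pr_0\circ\mathcal J\circ\int_\Delta$ (contract once at $e$ in each $G$-slot, restrict to $M$, integrate over the simplex), the Leibniz rule for the contractions producing the shuffle/subset sum, Dupont's face compatibility -- exploiting exactly as you say that the face maps of $G^\bullet\times M$ become projections at $g_\ell=e$ -- to descend from level $p_1+p_2$ to levels $p_1$ and $p_2$, and a shuffle count to split the simplex integral. The only divergences are minor: you invoke the Eilenberg--Zilber prism decomposition plus Fubini where the paper normalizes $\omega_i^{(p_i)}=dt_1\wedge\dots\wedge dt_{p_i}\wedge\bar\omega_i$ and compares the volume of $\Delta^{p_1+p_2}$ times the number of shuffles against the product of the volumes of $\Delta^{p_1}$ and $\Delta^{p_2}$, and the sign bookkeeping you defer is carried out explicitly there (via the inversion count of the shuffle) and does cancel as you anticipate.
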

\begin{proof}
The map is clearly a homomorphism of vector spaces. Hence we only have to show that
\[\pr_0\mathcal J\int_\Delta \omega_1\wedge \omega_2=\left(\pr_0\mathcal J\int_\Delta \omega_1\right)\wedge \left(\pr_0\mathcal J\int_\Delta \omega_2\right)\]
for $\omega_i\in \mathcal A^*(G^\bullet\times M)$. Using the refined grading defined above, we can, by additivity of the map, restrict ourselves to $\omega_i$ being a $p_i$-form in the direction of the simplex and a $q_i$-form in the direction of $G^p\times M$. Without loss of generality $\omega_i^{(p_i)}=dt_1\wedge\dots\wedge dt_{p_i}\wedge \bar{\omega_i}$

Let $X\in \mathfrak g$. We calculate:
\begin{align*} 
&\left(\pr_0\mathcal J\int_\Delta \omega_1\wedge \omega_2\right)(X)\\
&=i_M^*\iota_1(X)\dots\iota_{p_1+p_2}(X)\int_\Delta \omega_1\wedge \omega_2 \\
&=i_M^*\int_{\Delta^{p_1+p_2}} \iota_1(X)\dots\iota_{p_1+p_2}(X) (\omega_1\wedge \omega_2)\\
&=i_M^*\int_{\Delta^{p_1+p_2}} \sum_{\pi\in S(p_1,p_2)} (-1)^{f(\pi)}\left(\iota_{\pi(1)}(X)\dots\iota_{\pi(p_1)}(X)\omega_1^{(p_1+p_2)}\right)\wedge\\&\omit\hfill $\left(\iota_{\pi(p_1+1)}(X)\dots\iota_{\pi(p_1+p_2)}(X)\omega_2^{(p_1+p_2)}\right)$\\
\intertext{Here $S(p_1,p_2)$ is the shuffle group (see Definition \ref{def:Getzlermap}). What is the sign $f(\pi)$? Take a shuffle $\pi$, let $c_\pi(k)$ be the number of indices in $\{\pi(1),\dots,\pi(p_1)\}$, which are larger than $\pi(p_1+p_2-k)$. Then we get, when expanding the contraction, a $(-1)^{p_1+q_1-c_\pi(k)}$ for swapping $\iota_{\pi(p_1+p_2-k}(X)$ with the partially contracted $\omega_1$. Summing this up for $k=0,\dots,p_2-1$ results in $f(\pi)$. On the other hand the sum of the $c_\pi(k)$ is exactly the number of inversions of $\pi$ as the two groups of the shuffle are in order, thus $(-1)^{f(\pi)}(-1)^{(q_1-p_1)p_2}\sgn(\pi)=1$.}
&=i_M^*\!\int_{\Delta^{p_1+p_2}}\! \sum_{\pi} (-1)^{f(\pi)}(\iota_{\pi(1)}(X)\dots\iota_{\pi(p_1)}(X)(\id_\Delta\times(\del_{\pi(p_1+1)}\dots\del_{\pi(p_1+p_2)})\!)^*\omega_1)\wedge\\&\omit\hfill $ (\iota_{1}(X)\dots\iota_{p_2}(X)(\id_\Delta\times(\del_{\pi(1)}\dots\del_{\pi(p_1)}))^*\omega_2)$\\
\intertext{We can add the $\del$'s in front of the $\omega_i$ as $\sigma_i^*\del_i^*=\id$, and $\sigma_i\circ i_M=i_M$. Now apply the property of simplicial Dupont forms with respect to the face maps.}
&=i_M^*\int_{\Delta^{p_1+p_2}} \sum_{\pi} (-1)^{f(\pi)}(\iota_1(X)\dots\iota_{p_1}(X)((\del^{\pi(p_1+1)}\dots\del^{\pi(p_1+p_2)})\times\id)^*\omega_1^{(p_1)})\wedge\\&\omit\hfill $ (\iota_1(X)\dots\iota_{p_2}(X)((\del^{\pi(1)}\dots\del^{\pi(p_1)})\times \id)^*\omega_2^{(p_2)})$\\
&=i_M^*\int_{\Delta^{p_1+p_2}} \sum_{\pi} (-1)^{f(\pi)}(\iota_1(X)\dots\iota_{p_1}(X)dt_{\pi(1)}\wedge\dots\wedge dt_{\pi(p_1)}\wedge \bar{\omega_1})\wedge\\&\omit\hfill $ (\iota_1(X)\dots\iota_{p_2}(X)dt_{\pi(p_1+1)}\wedge\dots\wedge dt_{\pi(p_1+p_2)}\wedge \bar{\omega_2})$\\
&=i_M^*\int_{\Delta^{p_1+p_2}} \sum_{\pi} (-1)^{f(\pi)}(dt_{\pi(1)}\wedge\dots\wedge dt_{\pi(p_1)}\wedge dt_{\pi(p_1+1)}\wedge\dots\wedge dt_{\pi(p_1+p_2)}\wedge\\&\omit\hfill $ (-1)^{(q_1-p_1)p_2+p_1^2+p_2^2} (\iota_1(X)\dots\iota_{p_1}(X)\bar{\omega_1})\wedge (\iota_1(X)\dots\iota_{p_2}(X)\bar{\omega_2})$\\
\intertext{The sign comes from rearranging the forms. Now recall that the volume of the $p$-simplex is $\frac1{p!}$ and the number of elements of the shuffle group is $\frac{(p_1+p_2)!}{p_1! p_2!}$. Thus we obtain:}
&=\frac{1}{p_1! p_2!}\, (-1)^{p_1^2+p_2^2}\, i_M^*(\iota_1(X)\dots\iota_{p_1}(X)\bar{\omega_1})\wedge(\iota_1(X)\dots\iota_{p_2}(X)\bar{\omega_2})\\
&=\left(i_M^*\int_\Delta \iota_1(X)\dots\iota_{p_1}(X) \omega_1\right)\wedge\left(i_M^*\int_\Delta \iota_1(X)\dots\iota_{p_2}(X) \omega_2\right)\\
&=\left(\pr_0\mathcal J\int_\Delta \omega_1\right)(X)\wedge \left(\pr_0\mathcal J\int_\Delta \omega_2\right)(X)\\
&=\left(\left(\pr_0\mathcal J\int_\Delta \omega_1\right)\wedge \left(\pr_0\mathcal J\int_\Delta \omega_2\right)\right)(X).
\end{align*}
This finishes the proof.
\end{proof}

This theorem motivates the following definition, which translates the definition of characteristic form and transgression to the Cartan model.
\begin{defn}[compare {\cite[543]{BerlineVergne83}}]
    Let $\vartheta,\vartheta'$ be $G$-invariant connections on a $G$-equivariant principal $K$-bundle $E\to M$. The \emph{characteristic form}\index{characteristic form!Cartan model} related to the invariant polynomial $P\in I^*(K)$ is defined as $P(\Omega^\vartheta+\mu^\vartheta)\in \Omega_G^*(M)$ and the \emph{transgression form}\index{transgression form! Cartan model} is $\widetilde{\omega}_P(\vartheta',\vartheta)=\int_I P(\Omega^{\vartheta_t}+\mu^{\vartheta_t})$,\symindex[o]{\widetilde{\omega}_P} where $\vartheta_t$ is the convex combination of $\vartheta$ and $\vartheta'$ as in the proof of Theorem \ref{thm:chernweil}.
\end{defn}
Let $\Theta$ and $\Theta'$ be the connections associated to $\vartheta$ and $\vartheta'$, respectively, by \eqref{eq:Theta}. The following lemma shows that we did not run into notational difficulties.
\begin{lemma}
    We have \[\widetilde{\omega}_P(\vartheta',\vartheta)=\pr_0\left(\mathcal J\left(\int_\Delta\widetilde{\omega}_P(\Theta',\Theta)\right)\right),\] where the right-hand side is the transgression form of Theorem \ref{thm:chernweil}.
\end{lemma}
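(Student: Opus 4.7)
The plan is to reduce the statement to \autoref{thm:classform} by extending everything to the $G$-equivariant principal $K$-bundle $[0,1]\times E\to[0,1]\times M$ carrying the connection $\tilde\vartheta=(1-s)\vartheta+s\vartheta'$, where $s$ is the coordinate on $[0,1]$.

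First I would identify the relevant simplicial connection on $G^\bullet\times[0,1]\times E$. Applying the construction \eqref{eq:Theta} to $\tilde\vartheta$ gives a Dupont $1$-form $\tilde\Theta$. Since $G$ acts trivially on $[0,1]$, the map $(t,g_1,\dots,g_p,s,e)\mapsto g_{i+1}\cdots g_p(s,e)$ is just $(s,g_{i+1}\cdots g_p e)$, so the pullback $\tilde\vartheta_i$ equals $(1-s)\vartheta_i+s\vartheta'_i$. Consequently $\tilde\Theta^{(p)}=(1-s)\Theta^{(p)}+s\Theta'^{(p)}$, which is exactly the convex combination path of simplicial connections used to define the right-hand side via \autoref{thm:chernweil}. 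In particular, $\widetilde\omega_P(\Theta',\Theta)=\int_{[0,1]\times M/M}\omega_P(\tilde\Theta)$.

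Next I would apply \autoref{thm:classform} to the extended bundle $[0,1]\times E\to[0,1]\times M$ with connection $\tilde\vartheta$ (viewed as a $G$-equivariant $K$-bundle in the obvious way), obtaining
\[\pr_0\left(\mathcal J\left(\int_\Delta \omega_P(\tilde\Theta)\right)\right)=P(\Omega^{\tilde\vartheta}+\mu^{\tilde\vartheta})\in S^*(\mathfrak g\dual)\otimes\Omega^*([0,1]\times M).\]
Fiber-integrating over $[0,1]$, the right-hand side becomes $\int_I P(\Omega^{\vartheta_t}+\mu^{\vartheta_t})$, which by definition is $\widetilde\omega_P(\vartheta',\vartheta)$.

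The final (and essentially routine) step is to verify that the composition $\pr_0\circ\mathcal J\circ\int_\Delta$ commutes with fiber integration over $[0,1]$: the integration $\int_\Delta$ is along $\Delta^p$ and commutes with $\int_{[0,1]}$ by Fubini; the map $\mathcal J$ only involves contractions with Lie algebra elements along the $G$-factors and pullback by the inclusion $G^k\times([0,1]\times M)\hookrightarrow G^p\times([0,1]\times M)$ at $e$'s, both of which act trivially on the $[0,1]$-direction; and $\pr_0$ merely picks out the simplicial degree zero component. Chaining these commutations gives
\[\pr_0\!\left(\mathcal J\!\left(\int_\Delta \widetilde\omega_P(\Theta',\Theta)\right)\right)=\int_{[0,1]}\pr_0\!\left(\mathcal J\!\left(\int_\Delta\omega_P(\tilde\Theta)\right)\right)=\int_I P(\Omega^{\vartheta_t}+\mu^{\vartheta_t})=\widetilde\omega_P(\vartheta',\vartheta).\]
The only point requiring care is the first step (matching the two candidate paths of connections); once that is in place, the rest is bookkeeping about which factors the operators touch.
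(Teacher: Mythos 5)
Your proposal is correct and follows essentially the same route as the paper: the paper's proof is a one-line compression that simply asserts the linearity of $\pr_0\circ\mathcal J\circ\int_\Delta$ and its commutation with $\int_I$, leaving implicit both the identification $\tilde\Theta=(1-s)\Theta+s\Theta'$ and the application of Theorem~\ref{thm:classform} to $[0,1]\times E$, which you spell out. Your explicit matching of the two paths of connections is exactly the point that makes the reduction work, so the extra detail is welcome rather than a deviation.
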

\begin{proof}
As the composition of maps $\pr_0\circ \mathcal J\circ\int_\Delta$ is linear and all integral are taken over compact sets, it commutes with $\int_I$ and thus the simplicial transgression form is mapped to the one in the Cartan model.
\end{proof}
\subsection{Universal Chern-Weil construction}
There is a `canonical' connection on the classifying bundle (compare \cite[p.94]{Dupont}). As seen above $EK\to BK$ is the geometric realization of the simplicial bundle $\gamma\colon N\overline K\to NK$. Let $\vartheta_0\in \Omega^1(K,\mathfrak k)$ denote the unique connection of the trivial bundle $K\to \mathrm{pt}$, i.e., \[\vartheta_0(k)=L_{k\invers}\from T_kK\to T_eK=\mathfrak k.\] 
Let \[\pi_{i}\from \Delta^p\times K^{p+1}\to K\] denote the projection to the $i$-th coefficient, $i=0,\dots,p$ and $\vartheta_{i}=\pi_{i}^*\vartheta_0$. Then we define $\bar{\vartheta}$ on $\Delta^p\times (N\overline K)_{p}$ by \[\bar{\vartheta}=\sum_{i} t_i\vartheta_{i},\] where $(t_0,\dots,t_p)$ are barycentric coordinates on the simplex. $\bar{\vartheta}|_{\Delta^p\times (N\overline K)_p}$ is a connection on $\Delta^p\times (N\overline K)_p$, as it is a convex combination of connections. It can be seen easily from the definition, that $\bar{\vartheta}$ is a  simplicial Dupont 1-form. \label{NGconnection}

Now we are able to relate the universal classes in $H^*(BK)$ with those constructed by the Chern-Weil construction. This is a slight generalization of \cite[Theorem 6.13]{Dupont}.
\begin{theorem}\label{thm:unichernweil}Let $K$ be a Lie group.
 \begin{enumerate}
     \item There is a homomorphism \begin{align*}c\from I^*(K)&\to H^{2*}(BK)=\faktor{\mathcal A^{2*}(NK)_\cl}{d\mathcal A^{2*-1}(NK)}\\ P&\mapsto c_P(N\overline K)=[\omega_P(\bar{\vartheta})],\end{align*} which is, for compact groups, inverse to $\pr_0\circ \mathcal J\circ\int_\Delta$.
	\item Let $G$ be another Lie group, $P\in I^*(K)$ and let $E$ be a smooth $G$-invariant principal-$K$-bundle over the smooth manifold $M$, then \[c_P(E)=c(P)(EG\times_G E),\]
i.e., the definition via the Chern-Weil construction on the left-hand side equals the pullback definition via the universal class $c(P)=c_p(N\overline K)$ on the right hand side.
 \end{enumerate}
\end{theorem}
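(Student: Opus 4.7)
The plan is to handle parts (1) and (2) separately, combining in each case the naturality of the simplicial Chern--Weil construction (Theorem~\ref{thm:chernweil}) with the computation of Theorem~\ref{thm:classform}.

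For part (1), the map $c$ is an algebra homomorphism by Theorem~\ref{thm:chernweil}(2) applied to the simplicial bundle $\gamma\from N\overline K\to NK$ with connection $\bar\vartheta$. To see that $\pr_0\circ\mathcal J\circ\int_\Delta$ is a left inverse, I will realise the universal bundle as the special case of Theorem~\ref{thm:classform} with $G=K$ acting on $E=K$ by left multiplication over $M=\mathrm{pt}$. The simplicial map
\[\phi_p(g_1,\dots,g_p)=(g_1\cdots g_p,\,g_2\cdots g_p,\,\dots,\,g_p,\,e)\]
furnishes an isomorphism $G^\bullet\times E\xrightarrow{\sim} N\overline K$ over $G^\bullet\times\mathrm{pt}\xrightarrow{\sim} NK$, and carries the Dupont connection $\Theta$ built via \eqref{eq:Theta} from the Maurer--Cartan form $\vartheta_0$ on $K$ to $\bar\vartheta$. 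Theorem~\ref{thm:classform} then gives
\[\pr_0\!\left(\mathcal J\!\left(\int_\Delta\omega_P(\bar\vartheta)\right)\right)=P(\Omega^{\vartheta_0}+\mu^{\vartheta_0}),\]
and the Maurer--Cartan equation forces $\Omega^{\vartheta_0}=0$ while a short calculation yields $\mu^{\vartheta_0}(X)(k)=\Ad_{k\invers}X$; invariance of $P$ reduces the right-hand side to $P\in I^*(K)$. Thus $\pr_0\circ\mathcal J\circ\int_\Delta\circ\, c=\id_{I^*(K)}$. For compact $K$, the maps $\int_\Delta$ and $\mathcal J$ are quasi-isomorphisms by \cite{Dupont} and \cite{Getzler}, while Lemma~\ref{lem:IntGroupGetzler} applied with $M=\mathrm{pt}$ shows that $\pr_0$ induces an isomorphism of vertical cohomology onto $I^*(K)$; consequently $c$ is a two-sided inverse.

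For part (2), I will work via the classifying diagram \eqref{eq:classmap}, which by Lemma~\ref{lem:classmapsquare} realises the Borel bundle $EG\times_G E\to EG\times_G M$. Naturality of simplicial Chern--Weil (Theorem~\ref{thm:chernweil}(5)) yields
\[c(P)(EG\times_G E)=\|\psi\|^*[\omega_P(\bar\vartheta)]=[\omega_P(\bar\psi^*\bar\vartheta)]\in H^*\bigl(\mathcal A^*(N(G^\bullet\times M)_{\mathcal U})\bigr).\]
On the same simplicial principal bundle $N(G^\bullet\times E)_{\pi\invers\mathcal U}$ there sits a second connection: the pullback $\Theta_{\mathcal U}$ along the cover map of the Dupont connection $\Theta$ on $G^\bullet\times E$ associated by \eqref{eq:Theta} to the given $G$-invariant $\vartheta$. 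Theorem~\ref{thm:chernweil}(4) applied to the two connections $\bar\psi^*\bar\vartheta$ and $\Theta_{\mathcal U}$ yields $[\omega_P(\bar\psi^*\bar\vartheta)]=[\omega_P(\Theta_{\mathcal U})]$, and the cover quasi-isomorphism identifies this class with $[\omega_P(\Theta)]\in H^*(\mathcal A^*(G^\bullet\times M))$. Theorem~\ref{thm:classform} then sends the latter under $\pr_0\circ\mathcal J\circ\int_\Delta$ to $[P(\Omega^\vartheta+\mu^\vartheta)]=c_P(E)$ in the Cartan model. For compact $G$ this closes the loop through all three models of $H^*_G(M)$; for non-compact $G$ the same chain of identifications is read in the simplicial de~Rham model, with the Getzler injection of the Cartan complex providing the compatibility statement noted in the introduction.

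The step I expect to be most delicate is the verification of the identifications of connections underlying each part: in part~(1), that $\phi$ genuinely intertwines $\Theta$ and $\bar\vartheta$ (requiring a careful comparison of the projections $\pi_i\from K^{p+1}\to K$ with the maps $(g_1,\dots,g_p,e)\mapsto g_{i+1}\cdots g_pe$ entering the definition of $\Theta$); and in part~(2), that $\bar\psi^*\bar\vartheta$ and $\Theta_{\mathcal U}$ live on the \emph{same} simplicial principal bundle rather than on two canonically isomorphic ones, so that Theorem~\ref{thm:chernweil}(4) can be invoked directly. Once these identifications are in place, the remainder of the argument is a routine assembly of the quasi-isomorphisms and naturality statements established in the preceding sections.
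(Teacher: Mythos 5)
Your proposal is correct and follows essentially the same route as the paper: part (1) reduces to Theorem~\ref{thm:classform} via the identification of $N\overline K\to NK$ with the simplicial bundle of $K$ acting on itself (with vanishing curvature and moment map $X\mapsto\Ad_{k^{-1}}X$), and part (2) is exactly the paper's chain of equalities through the classifying diagram \eqref{eq:classmap}, using independence of the connection to pass from $\bar\psi^*\bar\vartheta$ to the pullback of $\Theta$ and the \v{C}ech quasi-isomorphism to descend to $G^\bullet\times M$. You are in fact more explicit than the paper on the two points you flag as delicate (the intertwining $\phi^*\bar\vartheta=\Theta$ and the two connections living on the same bundle $N(G^\bullet\times E)_{\pi^{-1}\mathcal U}$), both of which check out.
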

\begin{proof}
    The homomorphism property is just a special case of Theorem \ref{thm:chernweil} and the statement about the inverse follows from Theorem \ref{thm:classform}, as the curvature of $\vartheta$ is equal to zero and the moment map equals the identity.

 For the second assertion let $\mathcal U$ be a trivializing cover of $\pi\colon E\to M$. From \eqref{eq:classmap} and \eqref{eq:cechmap} we obtain a commutative diagram
\begin{equation}
\begin{tikzcd}
 G^\bullet\times E\arrow{d}&\arrow{l}{i}N(G^\bullet\times E)_{\pi\invers\mathcal U}\arrow{d}{\pi_{\mathcal U}}\arrow{r}{\bar\phi}&N\overline K\arrow{d}{\gamma}\\
G^\bullet\times M&\arrow{l}{i}N(G^\bullet\times M)_{\mathcal U}\arrow{r}{\phi}&NK.
\end{tikzcd}
\end{equation}
Now:
\begin{alignat*}{2}
 i^*c(P)(EG\times_G E)&=c(P)(\|N(G^\bullet&&\times E)_{\pi\invers\mathcal U}\|)\\
&=\|\varphi\|^*c(P) && \text{(right square is a pullback by Lemma \ref{lem:classmapsquare})}\\
&=\varphi^*[\omega_P(\bar{\vartheta})]&& \text{(isomorphism of Theorem \ref{thm:realiso})}\\
&=[\omega_P(\varphi^*\bar{\vartheta})]&&\\
&=[\omega_P(i^*\vartheta)]&& \text{(class is independent of connection)}\\
&=i^*c_P(E) && \text{(Chern-Weil definition)}   
\end{alignat*}
The assertion follows, since $i^*$ is an isomorphism.
\end{proof}
\begin{defn}[compare Def. 2.39 of {\cite{Bunke}}]\label{def:charform}
    A \emphind{characteristic form} $\omega$ for principal $K$-bundles of degree $n$ associates to each connection $\vartheta\in\Omega^1(E,\mathfrak k)$ on a smooth principal $K$-bundles $\pi\from E\to M$ a closed differential form $\omega(\vartheta)\in\Omega_\cl^n(M)$, such that $\omega((\bar f,f)^*\vartheta)=f^*\omega(\vartheta)$ for every pullback-diagram 
\[\begin{tikzcd}
E'\arrow{r}{\bar f}\arrow{d}{\pi'}&E\arrow{d}{\pi}\\
M'\arrow{r}{f}&M.
\end{tikzcd}\]
\end{defn}

\begin{cor}\label{cor:CharformPoly}
Let $K$ be a compact Lie group. Any characteristic form $\omega$ for principal $K$-bundles of degree $2n$ corresponds to a polynomial $P\in I^n(K)$, which induces via the Chern-Weil construction the same characteristic class as $\omega$.    
\end{cor}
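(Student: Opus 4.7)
The plan is to extract the desired polynomial $P$ from the universal situation and then to use naturality under classifying maps to transfer the identity to an arbitrary bundle.

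First, apply $\omega$ level-wise to the canonical simplicial Dupont connection $\bar\vartheta$ on $\gamma\colon N\overline K\to NK$ defined on p.~\pageref{NGconnection}. By naturality of $\omega$ under pullback along face maps, the level-wise forms $\omega(\bar\vartheta|_{\Delta^p\times(N\overline K)_p})$ assemble into a closed simplicial Dupont form $\omega(\bar\vartheta)\in\mathcal A^{2n}(NK)_\cl$. Since $K$ is compact, Theorem~\ref{thm:unichernweil}(1) asserts that $\pr_0\circ\mathcal J\circ\int_\Delta$ is inverse to the Chern--Weil isomorphism $I^*(K)\cong H^{2*}(BK)$, so I define
\[P\defeq\pr_0\!\left(\mathcal J\!\left(\int_\Delta\omega(\bar\vartheta)\right)\right)\in I^n(K),\]
which by construction satisfies $[\omega(\bar\vartheta)]=[\omega_P(\bar\vartheta)]\in H^{2n}(BK)$.

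Second, let $\pi\colon E\to M$ be any principal $K$-bundle with connection $\vartheta$, and choose a trivialising cover $\mathcal U$ of $M$. Taking $G$ trivial in the construction of Section~5, diagram~\eqref{eq:classmap} furnishes a bundle map $(\bar\phi,\phi)$ from $\pi_\mathcal U\colon NE_{\pi\invers\mathcal U}\to NM_\mathcal U$ to $\gamma\colon N\overline K\to NK$ whose realisation is a pullback by Lemma~\ref{lem:classmapsquare}. Denote by $i$ the \v Cech comparison map \eqref{eq:cechmap}. Naturality of $\omega$ and of $\omega_P$ (Theorem~\ref{thm:chernweil}(5)) then give
\[i^*\omega(\vartheta)=\omega(i^*\vartheta),\qquad \phi^*\omega(\bar\vartheta)=\omega(\bar\phi^*\bar\vartheta),\]
and analogously for $\omega_P$.

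The crux is that $i^*\vartheta$ and $\bar\phi^*\bar\vartheta$ are two simplicial connections on the \emph{same} simplicial principal $K$-bundle $NE_{\pi\invers\mathcal U}$, so either characteristic form assigns them cohomologous values. This connection-independence for the abstract form $\omega$, obtained by linear interpolation on $[0,1]\times NE_{\pi\invers\mathcal U}$, naturality along the two endpoint inclusions, and closedness of $\omega$ combined with homotopy invariance of de~Rham cohomology, is the only delicate point and is the main obstacle if one is being careful; everything else is formal manipulation. Granting it, the chain of equalities
\[i^*[\omega(\vartheta)]=\phi^*[\omega(\bar\vartheta)]=\phi^*[\omega_P(\bar\vartheta)]=i^*c_P(E)\]
in $H^{2n}(\|NM_\mathcal U\|)$, together with the fact that $i^*$ is an isomorphism (the lemma containing \eqref{eq:cechmap}), yields $[\omega(\vartheta)]=c_P(E)$ in $H^{2n}(M)$, completing the proof.
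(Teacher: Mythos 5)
Your overall strategy coincides with the paper's: evaluate $\omega$ on the canonical simplicial connection $\bar\vartheta$ of $N\overline K\to NK$, transport the resulting closed Dupont form through $\mathcal J\circ\int_\Delta$, use compactness of $K$ to land in $I^n(K)$, and then invoke universality. Your second half --- the transfer to an arbitrary bundle via the \Cech comparison map $i$, the classifying map $(\bar\phi,\phi)$, and the connection-independence of $[\omega(\vartheta)]$ obtained by linear interpolation, naturality at the endpoints and homotopy invariance --- is correct and is a welcome expansion of the paper's one-sentence conclusion; it mirrors the computation in the proof of Theorem~\ref{thm:unichernweil}(2).

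There is, however, a genuine gap in your definition of $P$. The element $\pr_0\bigl(\mathcal J\bigl(\int_\Delta\omega(\bar\vartheta)\bigr)\bigr)$ is the naive simplicial-degree-zero component of a cocycle in the Getzler total complex, and nothing forces it to be $K$-invariant: for $M=pt$ the Cartan part $d+\iota$ of the differential vanishes, so the cocycle condition in the relevant degree reads $\bar d Z_0=-\bar\iota Z_2$, and $Z_0$ lies in $I^n(K)=\ker\bigl(\bar d\from C^0\to C^1\bigr)$ only if $\bar\iota Z_2$ happens to vanish --- which is the case for $\omega_P(\bar\vartheta)$ by Theorem~\ref{thm:classform}, but not for an arbitrary characteristic form. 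Correspondingly, Theorem~\ref{thm:unichernweil}(1) is a statement about the induced map on cohomology; the paper's proof of it only establishes the composite $(\pr_0\circ\mathcal J\circ\int_\Delta)\circ c=\id$ on $I^*(K)$ and does not license reading off $P$ by a cochain-level projection. The paper's own proof of the corollary avoids this: since $K$ is compact, the Getzler complex contracts onto the Cartan complex (Lemma~\ref{lem:IntGroupGetzler} together with $\bar\iota$), so the cocycle $\mathcal J\bigl(\int_\Delta\omega(\bar\vartheta)\bigr)$ is \emph{cohomologous} to a cocycle concentrated in simplicial degree zero, and that cocycle is the genuine $P\in I^n(K)$. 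Replacing your explicit formula by this statement repairs the argument; the rest of your proof then goes through unchanged.
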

\begin{proof}
	Let $\bar{\vartheta}=\left\{\bar{\vartheta}^{(p)}\right\}_{p\in\Natural}$ denote the simplicial connection on $N\overline K\to NK$ defined above. By the pullback property of the characteristic form, $\left\{\omega\left(\bar{\vartheta}^{(p)}\right)\right\}_{p\in\Natural}\in\mathcal A^n_\cl(NK)$ is a closed Dupont $n$-form. Thus $\mathcal J\circ\int_\Delta (\omega)$ is a cocycle in the Getzler model. Since $K$ is compact, the Getzler model contracts to the Cartan model and thus $\mathcal J\circ\int_\Delta (\omega)$ is cohomologous to an element of $I^n(K)$, which we will call $P$. Since $N\overline K\to NK$ is a universal principal $K$-bundle, the assertion follows.
\end{proof}

\begin{remark}
    There is a slightly shorter way, to show the main theorem above, then the one we gave, but we will apply the construction given above in the companion paper \cite{Ich2}, when defining differential refinements of equivariant characteristic classes. As this shorter construction is maybe interesting to the reader we will give a sketch (which generalizes arguments of \cite{DupontKamber}). Instead of the simplicial manifold of the covering one defines a special classifying space for each bundle: As before let $\pi:E\to M$ be an $G$-equivariant principal $K$-bundle and $G^\bullet\times E\to G^\bullet\times M$ the associated simplicial bundle. We define a simplicial manifold $N(G^\bullet\times E)^\bullet$, s.t., $N(G^p\times E)^p=G^p\times E^{p+1}$, face and degeneracy maps act on the $G$ part as described above and on the $E$ part be removing/doubling the $i$th entry. In particular $\del_p(g_1,\dots,g_p,b_0,\dots,b_p)=(g_1,\dots,g_{p-1},g_pb_0,\dots,g_pb_{p-1})$. $K$ acts diagonal on the $E$'s. This allows to define maps
	\[\begin{tikzcd}
    G^\bullet\times E\arrow{d}{\pi}\arrow{r}{\bar{\psi}}&N(G^\bullet\times E)^\bullet\arrow{d}{}&\arrow[swap]{l}{\bar{\varphi}_x}N\overline K\arrow{d}{\gamma}\\
    G^\bullet\times M\arrow{r}{\psi}&N(G^\bullet\times E)^\bullet/K&\arrow[swap]{l}{\varphi_x}NK.
	\end{tikzcd}\]
	Here $\bar{\psi}$ is induced by the diagonal inclusion $E\to E^p$ and the map $\bar{\varphi}$ is given by
	\begin{align*}\left(\bar{\varphi}_x\right)_p\from K^{p+1}&\to G^{p}\times E^{p+1}\\
	    (k_0,\dots,k_p)&\mapsto (e,\dots,e,xk_0,\dots,xk_p).
	\end{align*}
	As $\|N(G^\bullet\times E)\|$ is contractible, the right side of the diagram induces homotopy equivalences in the geometric realization.
	
	A bisimplicial version of this construction is applied in \cite[Theorem 1.2]{FelisattiNeumann}: Marcello Felisatti and Frank Neumann work on the bisimplicial manifolds itself to construct a classifying map and prove a similar statement to \autoref{thm:unichernweil}. 
\end{remark}

\section{Vector bundles alias principal \texorpdfstring{$\Gl_n(\C)$}{Gln(C)}-bundles}
As (complex) vector bundles are of specific interest, we want to translate the statements, about equivariant characteristic forms in the last sections, from principal $\Gl_n(\C)$-bundles to their associated vector bundles. One can also replace $\Gl_n(\C)$ by subgroups, e.g., $U(n)$ to obtain analogues statements.

\begin{defn} Let $E$ be a principal $\Gl_n(\C)$-bundle. The associated vector bundle 
\[\mathcal E=E\times_{\Gl_n} \C^n\]
is the quotient of $E\times \C^n$ by the diagonal action of $\Gl_n$, where the action on $\C^n$ is given by matrix multiplication from the left.
\end{defn}
\begin{lemma}\label{lem:endiso}
    There is an isomorphism $C^\infty(E,\mathfrak{gl_n})^{\Gl_n}\to C^\infty(M,\End \mathcal E)$.
\end{lemma}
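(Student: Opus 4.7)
The plan is to identify $\End\mathcal{E}$ with an associated bundle of $E$ and then invoke the standard correspondence between sections of an associated bundle and equivariant maps on the total space. More precisely, $\Gl_n$ acts on $\End(\C^n)=\mathfrak{gl}_n$ by conjugation, which is exactly the adjoint action $\Ad$. Under this action, the natural map
\[E \times_{\Gl_n}\mathfrak{gl}_n \longrightarrow \End\mathcal{E},\qquad [e,A]\longmapsto\bigl([e,v]\mapsto [e,Av]\bigr),\]
is fibrewise the canonical isomorphism $\End(\C^n)\xrightarrow{\cong}\End(\mathcal{E}_{\pi(e)})$ induced by the frame $e$, and one checks this is well-defined (the right-hand side is independent of the representative $e$ because the $\Ad$-twist on $A$ exactly compensates the twist appearing on the two copies of $\C^n$). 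Hence it is an isomorphism of vector bundles.

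Combining with the standard identification $\Gamma(E\times_{\Gl_n}V)\cong C^\infty(E,V)^{\Gl_n}$ valid for any finite-dimensional representation $V$ (with the convention that $f\in C^\infty(E,V)$ is equivariant iff $f(eg)=g^{-1}\cdot f(e)$), one gets
\[C^\infty(M,\End\mathcal{E})\;\cong\;\Gamma\bigl(E\times_{\Gl_n}\mathfrak{gl}_n\bigr)\;\cong\;C^\infty(E,\mathfrak{gl}_n)^{\Gl_n}.\]
Explicitly, $f\in C^\infty(E,\mathfrak{gl}_n)^{\Gl_n}$ is sent to the section $s_f$ determined by $s_f([e,v])=[e,f(e)v]$, and the inverse sends a section $s$ to the function $f_s$ where $f_s(e)\in\mathfrak{gl}_n$ is defined by $s(\pi(e))\cdot[e,v]=[e,f_s(e)v]$ for all $v\in\C^n$.

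The only thing to verify is that these assignments are mutually inverse and smooth. Smoothness is local and follows by choosing a local trivialisation of $E$: on a trivialising neighbourhood the correspondence reduces to the tautological identification $C^\infty(U\times\Gl_n,\mathfrak{gl}_n)^{\Gl_n}\cong C^\infty(U,\mathfrak{gl}_n)$ given by restriction to the identity section, and this is manifestly smooth in both directions. Thus no step is truly difficult; the only subtlety to keep track of is the sign/inverse convention so that the conjugation action on $\End(\C^n)$ agrees with the $\Ad$-action used to form the associated bundle.
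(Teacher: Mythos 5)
Your proposal is correct and follows essentially the same route as the paper: both identify $\End\mathcal{E}$ with the associated bundle $E\times_{\Gl_n}\mathfrak{gl}_n$ for the adjoint action and then apply the standard correspondence between $\Gl_n$-equivariant $\mathfrak{gl}_n$-valued functions on $E$ and sections of that associated bundle, with the paper merely spelling out via local sections the well-definedness and mutual-inverse checks that you invoke as standard. No gap.
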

\begin{proof}
This map is well known, but we give a proof for completeness.
\[\End \mathcal E =\End (E\times_{\Gl_n} \C^n)=E\times_{\Gl_n} \End(\C^n)=E\times_{\Gl_n} M_n(\C)=E\times_{\Gl_n} \mathfrak{gl}_n\]
Hence, it suffices to construct the isomorphism \[C^\infty(E,\mathfrak{gl_n})^{\Gl_n}\to C^\infty(M,E\times_{\Gl_n} \mathfrak{gl}_n).\]
Therefore, let $f\in C^\infty(E,\mathfrak{gl_n})^{\Gl_n}$ and $s$ be a local section of $E\to M$. Then, 
\begin{align*}
    M\supset U&\to E\times_{\Gl_n} \mathfrak{gl}_n\\
m&\mapsto (s(m),f(s(m)))
\end{align*}
defines a local section of $\End\mathcal E\to M$. Picking another local section $s'$ around $m$, there exists $A\in\Gl_n$, s.t., $s'(m)=s(m)A$ and 
\begin{align*}(s'(m),f(s'(m)))&=(s(m)A,f(s(m)A))\\&=(s(m)A,A\invers f(s(m))A)\\&=(s(m),f(s(m)))\in E_m\times_{\Gl_n} \mathfrak{gl}_n.\end{align*}
Hence the image of $f$ is independent of the local section and defines an element in $ C^\infty(M,\End \mathcal E)$.

On the other hand given $f\in C^\infty(M,E\times_{\Gl_n} \mathfrak{gl}_n)$, then for $x\in E$, $f(\pi(x))=(xg,A_f(x))=(x,gA_f(x)g\invers)$ for some $A_f(x)\in \mathfrak{gl}_n$. The map $x\mapsto gA_f(x)g\invers$ is equivariant by definition and smooth as $\pi,f,$ and the action are smooth. We only have to check that both map are inverse to each other.

Start with $f\from E\to \mathfrak{gl}_n$. Let $x\in E$ and $s$ be a local section of $E$ around $\pi(x)$, then, for some $g\in\Gl_n$, 
\begin{multline}x\mapsto (s(\pi(x)),f(s(\pi(x))))=(xg,f(xg))\\=(x,gg\invers(f(x))gg\invers)=(x,f(x))\mapsto f(x).\end{multline}
Now let $f\in C^\infty(M,E\times_{\Gl_n} \mathfrak{gl}_n)$ and $s$ again be a local section of $E$. The composition is $m\mapsto (s(m),A_f(s(m)))=f(m)$, by the definition of $A_f$.
\end{proof}

A left $G$-action on $E$ which commutes with the action of $\Gl_n$ clearly induces a left $G$-action on $\mathcal E$.

There is a one to one correspondence between connections on the principal-$\Gl_n$-bundle and those one the associated vector bundle (see e.g. \cite[Ex. 3.4]{Baum}).

\begin{defn}[Def.\ 2.23.\ of \cite{Bunke}]\label{def:momentmap}
    Let $\nabla$ be a connection on the $G$-vector bundle $\mathcal E$. The \emphind{moment map} $\mu^\nabla\in\Hom(\mathfrak{g}, \omega^0(M, \End(\mathcal E)))^G$ \symindex[m]{\mu^\nabla} is defined by
\[\mu^\nabla(X) \wedge \varphi := \nabla_{X^\sharp_M} \varphi + L^{\mathcal E}_X \varphi,\quad \varphi \in\omega^0(M, \mathcal E).\]
Here $L^{\mathcal E}_X$ \symindex[l]{L^M_X} denotes the derivative
\[L^{\mathcal E}_X \varphi=\ddt\exp(tX)^*\varphi.\]
\end{defn}
\begin{remark}
	Observe that for a function $f\in C^\infty(M,\C)$, 
    \[L^{M\times \C}_X f(m)=\ddt\left(\exp(tX)^*f\right)(m)=\ddt f\left(\exp(tX)\invers m\right)=-df_m(X^\sharp_M).\]
Therefore, we altered the sign in the definition of \cite{Bunke}.
\end{remark}

\begin{theorem}\label{thm:compare}
    Let $\vartheta\in\Omega^1(E,\mathfrak{gl}_n)$ be a connection on the principal $\Gl_n$-bundle $E$ and $\nabla$ be the associated connection on the associated vector bundle $\mathcal E$. Then, with respect to the isomorphism of Lemma \ref{lem:endiso}, one has
\[d\vartheta+\vartheta\wedge \vartheta=R^\nabla \quad\text{ and }\quad \mu^\vartheta=\mu^\nabla.\]
\end{theorem}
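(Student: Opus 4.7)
The strategy is to transport all structures to $K$-equivariant data on the total space: a section $\varphi \in C^\infty(M, \mathcal E)$ corresponds to $\tilde\varphi \in C^\infty(E, \C^n)$ with $\tilde\varphi(xk) = k\invers\tilde\varphi(x)$, the connection $\nabla$ is characterized by $\widetilde{\nabla_Y\varphi}(x) = \tilde Y_x(\tilde\varphi)$ where $\tilde Y$ is the horizontal lift of $Y$, and Lemma~\ref{lem:endiso} provides the analogous identification for endomorphisms. Under these dictionaries, the curvature identity is the classical Cartan structure equation: evaluating $R^\nabla(Y_1, Y_2)\varphi$ on horizontal lifts and using $\vartheta([\tilde Y_1, \tilde Y_2]) = -(d\vartheta + \vartheta\wedge\vartheta)(\tilde Y_1, \tilde Y_2)$, one reads off that $R^\nabla$ acts on $\tilde\varphi$ by matrix multiplication with $d\vartheta + \vartheta\wedge\vartheta$; I would just state this and cite a standard source such as Kobayashi--Nomizu.

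The new content is the moment-map identity $\mu^\vartheta = \mu^\nabla$. At $x \in E$, decompose the $G$-fundamental vector field
\[X^\sharp_E(x) = \widetilde{X^\sharp_M}(x) + V(x),\]
with $V(x)$ vertical in $E \to M$. Since the vertical distribution is the image of the infinitesimal right $K$-action $\mathfrak{gl}_n \to T_x E$ and $\vartheta_x$ is its left inverse, $V(x)$ is the fundamental vector field of the $K$-action generated by $\vartheta_x(X^\sharp_E) = \mu^\vartheta(X)(x)$. Applied to $\tilde\varphi$ and using $\tilde\varphi(xk) = k\invers\tilde\varphi(x)$, one gets $V(x)(\tilde\varphi) = -\mu^\vartheta(X)(x)\cdot\tilde\varphi(x)$, whence
\[\widetilde{\nabla_{X^\sharp_M}\varphi}(x) = X^\sharp_E(x)(\tilde\varphi) + \mu^\vartheta(X)(x)\cdot\tilde\varphi(x).\]
For $L^\mathcal E_X$, the $G$-action on $\mathcal E = E \times_K \C^n$ is $g\cdot [y,v] = [gy,v]$; combined with the pullback convention $(g^*\varphi)(m) = g\cdot\varphi(g\invers m)$ implicit in the Remark, this yields $\widetilde{L^\mathcal E_X \varphi}(x) = -X^\sharp_E(x)(\tilde\varphi)$. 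Summing,
\[\widetilde{(\nabla_{X^\sharp_M}\varphi + L^\mathcal E_X \varphi)}(x) = \mu^\vartheta(X)(x)\cdot\tilde\varphi(x),\]
which under Lemma~\ref{lem:endiso} is exactly $\mu^\vartheta(X)\wedge\varphi$.

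The only delicate point I expect is sign-bookkeeping. Both the $K$-equivariance of $\tilde\varphi$ (which produces $k\invers$, not $k$) and the non-standard pullback convention recorded in the Remark ($g\cdot\varphi(g\invers m)$ rather than $g\invers\cdot\varphi(gm)$) contribute minus signs that must combine so that the two occurrences of $X^\sharp_E(\tilde\varphi)$ cancel; otherwise one would arrive at $\mu^\nabla = -\mu^\vartheta$. A consistency check against the trivial-bundle computation in the Remark ($L^{M\times\C}_X f = -df(X^\sharp_M)$ and $\nabla_{X^\sharp_M}f = df(X^\sharp_M)$, so $\mu = 0$) confirms the sign choices and closes the argument.
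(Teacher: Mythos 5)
Your argument is correct, and it takes a genuinely different route from the one in the paper. The paper also dismisses the curvature identity as standard, so the real comparison concerns $\mu^\vartheta=\mu^\nabla$. There the paper works locally: it picks a frame $s=(s_1,\dots,s_n)$, writes $\varphi=\sum_i\varphi_i s_i$, and decomposes the fundamental vector field at $s(m)$ as $ds\circ d\pi(X^\sharp_E)+(1-ds\circ d\pi)(X^\sharp_E)$, i.e.\ into a component \emph{tangent to the image of the section} (on which $\vartheta$ restricts to the local connection form, producing $\nabla_{X^\sharp_M}s_i$) plus a vertical remainder, which is then identified by the explicit computation \eqref{eq:stepinthemiddle} with the generator of $\ddt\exp(tX)\cdot s(\exp(-tX)m)$, i.e.\ with $L^{\mathcal E}_X s_i$. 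You instead work globally on the total space via the dictionary $\varphi\leftrightarrow\tilde\varphi$ and split $X^\sharp_E$ into its \emph{horizontal lift} of $X^\sharp_M$ plus a vertical part; since $\vartheta$ kills horizontal vectors, the vertical part is the fundamental vector field of $\mu^\vartheta(X)$ on the nose, and the two occurrences of $X^\sharp_E(\tilde\varphi)$ cancel between the $\nabla_{X^\sharp_M}$ and $L^{\mathcal E}_X$ terms. Your version buys frame-independence, no Leibniz bookkeeping over $\varphi=\sum_i\varphi_i s_i$, and a transparent sign cancellation (your consistency check against the trivial bundle settles the conventions correctly, given that the $G$-action on $\mathcal E=E\times_{\Gl_n}\C^n$ is indeed $g[y,v]=[gy,v]$ and the pullback is $(g^*\varphi)(m)=g\cdot\varphi(g^{-1}m)$); the paper's version buys an explicit coordinate formula in a frame, at the cost of the auxiliary computation \eqref{eq:stepinthemiddle}. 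The one point you should make explicit to close the argument is that multiplication of $\tilde\varphi$ by the equivariant function $\mu^\vartheta(X)\in C^\infty(E,\mathfrak{gl}_n)^{\Gl_n}$ is precisely the action of the corresponding endomorphism under Lemma~\ref{lem:endiso}, which is immediate from that lemma's construction.
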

\begin{proof}
The first statement is standard, while the second is stated in \cite[Lemma 3.2]{BerlineVergne83}, where the proof is left to the reader. Here it is: The statement is local. Let $s=(s_1,\dots,s_n)$ be a local frame, i.e., a local section of $E$, $X\in\mathfrak g$ and $\varphi\in C^\infty(M,\mathcal E)$. Locally one can write $\varphi=\sum_i \varphi_is_i$.
\begin{align*}
s^*(\mu^\vartheta(X))\varphi&=s^*(\iota(X^\sharp_E)\vartheta)\sum_i \varphi_is_i\\
&=\sum_i \varphi_i(s^*\vartheta[X^\sharp_E])s_i\\
&=\sum_i \varphi_i(s^*(\vartheta[ds\circ d\pi(X^\sharp_E)+(1-ds\circ d\pi)(X^\sharp_E)])s_i\\
&=\sum_i \varphi_i\left(s^*(\vartheta[ds (X^\sharp_M)] +\vartheta[(1-ds\circ d\pi)(X^\sharp_E)])\right)s_i\\
&\hspace{-0.6em}\overset{\text{see}}{\underset{\text{below}}{=}}\sum_i \varphi_i\left(s^*(\vartheta[ds(X^\sharp_M)])s_i+\ddt\exp(tX)\cdot(s_i)\right)\\
&=\sum_i \varphi_i\left(\nabla_{X^\sharp_M} s_i+\ddt\exp(tX)^*(s_i)\right)\\
&=\sum_i \big(d\varphi_i(X^\sharp_M)s_i+\varphi_i\nabla_{X^\sharp_M} s_i+\left(\ddt\exp(tX)^*\varphi_i\right)s_i\\
&\hspace{7em}+\varphi_i\ddt\exp(tX)^*(s_i)\big)\\
&=\nabla_{X^\sharp_M}\sum_i \varphi_is_i+\ddt\exp(tX)^*(\sum_i \varphi_is_i)\\
&=\nabla_{X^\sharp_M} \varphi + L^E_X \varphi\\
&=\mu^\nabla(X)\varphi
\end{align*}
For the step in the middle, we have to show that for any $m\in M$ the equation 
\begin{equation}\label{eq:stepinthemiddle}\vartheta_{s(m)}\left[(1-ds\circ d\pi)\left(\ddt \exp(tX) s(m)\right)\right]\cdot s=\ddt\exp(tX)(s(\exp(-tX)m))\end{equation}
holds, where the $\cdot$ on the left-hand side emphasizes that the action is from the right.
This is shown as follows. The vector field $(1-ds\circ d\pi)\left(\ddt \exp(tX) s(m)\right)$ is horizontal, since \[d\pi\left((1-ds\circ d\pi)\left(\ddt \exp(tX) s(m)\right)\right)=0.\] Hence there is a unique $Y\in \mathfrak{gl}_n$, such that
\[(1-ds\circ d\pi)\left(\ddt \exp(tX) s(m)\right)=s(m)Y\]
and thus $\vartheta_{s(m)}\left[(1-ds\circ d\pi)\left(\ddt \exp(tX) s(m)\right)\right]=Y$.

Now, calculate
\begin{align*}
(1-ds\circ d\pi)&\left(\ddt \exp(tX) (s(m))\right)\\
&=\ddt \exp(tX) s(m)-\ddt s\circ \pi(\exp(tX) s(m)),\\\intertext{hence, by equivariance of $\pi$,}
&=\ddt \left(\exp(-tX) s(m)-s(\exp(tX)\pi\circ s(m))\right)\\
&=\ddt \left(\exp(-tX) s(m)-s(\exp(tX)m)\right)\\
&=\ddt \exp(tX) s(\exp(-tX)m).
\end{align*}
Thus we have shown that shows that $Y=s(m)\invers\ddt \exp(tX) s(\exp(-tX)m)$ and hence equation \eqref{eq:stepinthemiddle} holds.
\end{proof}

Let $\vartheta,\vartheta'$ be connections on the principal $\Gl_n$-bundle $E$ and $\nabla,\nabla'$ the associated connections on the associated vector bundle $\mathcal E$. Then one has the transgression forms 
\[\widetilde{\omega}(\nabla,\nabla')=\int_I \omega(\nabla_t)=\widetilde{\omega}(\vartheta,\vartheta'),\]
which coincide in the sense of Lemma \ref{lem:endiso}.
Here $\nabla_t$ is, as above, the convex combination, which is a connection on $\Real\times \mathcal E\to \Real\times M$. \index{transgression form!vector bundle}\symindex[o]{\widetilde{\omega}(\nabla,\nabla')}

\section*{Acknowledgements}
The first author wants to thank the International Max Planck Research School {\it Mathematics in the Sciences} for financial support. This research was supported by ERC Starting Grant No.\ 277728.

\printbibliography
\end{document}